\newcommand{\leqnomode}{\tagsleft@true}
\newcommand{\reqnomode}{\tagsleft@false}
\date{}
\def\nd{\noindent}
\def\thend{\rule{3mm}{3mm}}
\newtheorem{theorem}{Theorem}[section]
\newtheorem{prop}{Proposition}[section]
\newtheorem{lemma}{Lemma}[section]
\newtheorem{rmk}{Remark}[section]
\newtheorem{exm}{Example}[section]
\newcommand{\R}{\mathbb{R}}
\begin{document}
	\title[Stein-Weiss problems via nonlinear Rayleigh quotient for concave-convex nonlinearities]{Stein-Weiss problems via nonlinear Rayleigh quotient for concave-convex nonlinearities}
	\vspace{1cm}
	
	\author{Edcarlos D. Silva}
	\address{Edcarlos D da Silva \newline  Universidade Federal de Goias, IME, Goi\^ania-GO, Brazil}
	\email{\tt edcarlos@ufg.br}

	\author{Marcos. L. M. Carvalho}
	\address{M. L. M. Carvalho \newline Universidade Federal de Goias, IME, Goi\^ania-GO, Brazil }
	\email{\tt marcos$\_$leandro$\_$carvalho@ufg.br}
	
		\author{Márcia S. B. A. Cardoso}
	\address{Márcia S. B. A. Cardoso \newline Intituto Federal de Goi\'as, Campus Goi\^anica, Goi\^ania-GO, Brazil }
	\email{\tt marcia.cardoso@ifg.edu.br
	}

	\subjclass[2010]{35A01 ,35A15,35A23,35A25} 
	
	\keywords{Stein-Weiss type problems, Nonlinear Rayleigh quotient, Regularity results, Brezis-Lieb type identity, Nehari method}
	\thanks{The first author was partially supported by CNPq with grant 309026/2020-2. The second author was partially supported by IFG with grant 23373004607/2019-44. The third author was partially supported by CNPq with grant 316643/2021-1.}

	\begin{abstract}
		In the present work, we consider existence and multiplicity of positive solutions for nonlocal elliptic problems driven by the Stein-Weiss problem with concave-convex nonlinearities defined in the whole space $\mathbb{R}^N$. More precisely, we consider the following nonlocal elliptic problem: 
		\begin{equation*}
			- \Delta u + V(x)u =   \lambda a(x) |u|^{q-2} u + \displaystyle \int \limits_{\mathbb{R}^N}\frac{b(y)\vert u(y) \vert^p dy}{\vert x\vert^\alpha\vert x-y\vert^\mu \vert y\vert^\alpha} b(x)\vert u\vert^{p-2}u, \,\, \hbox{in}\ \mathbb{R}^N, \,\, u\in H^1(\mathbb{R}^N),
		\end{equation*}
		where $\lambda >0, \alpha \in (0,N), N\geq3,
		0<\mu<N, 0 <
		\mu + 2 \alpha < N$. Furthermore, we assume also that $V: \mathbb{R}^N \to \mathbb{R}$ is a bounded potential, $a \in{L}^r(\mathbb{R}^N), a > 0$ in $\mathbb{R}^N$ and
		$b\in{L}^{t}(\mathbb{R}^N), b>0$ in $\mathbb{R}^N$ for some specific $r, t > 1$. We assume also that $1\leq q<2$ and $2_{\alpha,\mu} < p<2_{\alpha,\mu}^*$ where $2_{\alpha ,\mu}=(2N-2\alpha-\mu)/N$ and $2_{\alpha,\mu}^*= (2N-2\alpha-\mu)/(N-2)$.
		Our main contribution is to find the largest $\lambda^* > 0$ in such way that our main problem admits at least two positive solutions for each $\lambda \in (0, \lambda^*)$. In order to do that we apply the nonlinear Rayleigh quotient together with the Nehari method. Moreover, we prove a Brezis-Lieb type Lemma and a regularity result taking into account our setting due to the potentials $a, b : \mathbb{R}^N \to \mathbb{R}$.
	\end{abstract}
	
	\maketitle
	
	\section{Introduction}
	
	In the present work, we consider existence and multiplicity of positive solutions for nonlocal elliptic problems for the Stein-Weiss type problem. More specifically, we shall consider the following nonlocal elliptic problem:
	\begin{equation} \label{PEdcarlos}
		\left\{\begin{array}{ll} 
			- \Delta u + V(x) u =   \lambda a(x) |u|^{q-2}u + \displaystyle \int \limits_{\mathbb{R}^N}\frac{b(y)\vert u(y) \vert^p dy}{\vert x\vert^\alpha\vert x-y\vert^\mu \vert y\vert^\alpha} b(x)\vert u\vert^{p-2}u,\ \  \hbox{in}\ \mathbb{R}^N,
			\\ 
			u\in H^1(\mathbb{R}^N),
		\end{array}
		\right.  \tag{$P_\lambda$}
	\end{equation}
	where $\lambda >0, \alpha \in (0,N), N\geq3,
	0<\mu<N, 0 < \mu + 2 \alpha < N$. Furthermore, we assume also that $a \in{L}^r(\mathbb{R}^N), a > 0$ in $\mathbb{R}^N$ and
	$b\in{L}^{t}(\mathbb{R}^N), b>0$ in $\mathbb{R}^N$ for some $r, t > 1$. Here we assume also that $1\leq q<2$ and $2_{\alpha,\mu} < p<2_{\alpha,\mu}^*$ where $2_{\alpha ,\mu}=(2N-2\alpha-\mu)/N$ and $2_{\alpha,\mu}^*= (2N-2\alpha-\mu)/(N-2)$. Later on, we shall consider the hypotheses on the potentials $a, b : \mathbb{R}^N \to \mathbb{R}$.
	
	It is important to stress that several works have been done in the last years considering the Stein-Weiss problem with different kind of nonlinearities. Here we refer the reader to the important works
	\cite{Carlos, minbo, TARUN2022,Beckner,Biswas,Biswas1} where several Stein-Weiss type problems are considered. Now, assuming that $\lambda= 0, b \equiv 1$, the Problem \eqref{PEdcarlos} becomes the Schr\"odinger-Newton problem involving a nonlinearity of the Stein-Weiss type in the following form:
	\begin{equation*}\label{JoseCarlos}
		- \Delta u + V(x)u = \displaystyle\frac{1}{\vert x\vert^{\alpha}}\left(\displaystyle\int_{\mathbb{R}^N}\frac{F(u(y)) }{\vert x-y\vert^\mu\vert y\vert^{\alpha}}dy\right)f(u(x))\ \  \hbox{in}\ \mathbb{R}^N, 	u\in H^1(\mathbb{R}^N)
	\end{equation*}
	where $F(t) = \int_0^t f(s)ds, t \in \mathbb{R}$ and $f: \mathbb{R}\to \mathbb{R}$ is a continuous function. In most of these works the function $f$ satisfies the well-known Ambrosetti–Rabinowitz condition which proves that any Palais-Smale sequence is bounded. Hence, using some arguments from the machinery of variational methods, the authors ensured many results on existence of positive solutions for the Stein-Weiss problem. On this subject we refer the reader also to \cite{HLSPONDERADA,Chen,chenap,Zang,Zhou,Su,Yuan}. In \cite{minbo}, the authors considered the following nonlocal problem with the upper critical Sobolev exponent given in the following form: 
	\begin{equation*}
		- \Delta u + u = \displaystyle\frac{1}{\vert x\vert^{\alpha}}\left(\displaystyle\int_{\mathbb{R}^N}\frac{\vert u(y) \vert^{2_{\alpha,\mu}^*} }{\vert x-y\vert^\mu\vert y\vert^{\alpha}}dy\right)\;\;\vert u\vert^{2_{\alpha,\mu}^*-2}u\ \  \hbox{in}\ \mathbb{R}^N, 	u\in H^1(\mathbb{R}^N).
	\end{equation*}
	In that work, the authors developed a nonlocal version for the concentration principle of compactness in order to investigate the existence of nonnegative solutions. At the same time, the authors considered also the following Stein-Weiss type problem:
	\begin{equation*}
		- \Delta u + V(x) u = \displaystyle\frac{1}{\vert x\vert^{\alpha}}\left(\displaystyle\int_{\mathbb{R}^N}\frac{\vert  u(y) \vert^p }{\vert x-y\vert^\mu\vert y\vert^{\alpha}}dy\right)\;\;\vert u\vert^{p-2}u\ \  \hbox{in}\ \mathbb{R}^N, 	u\in H^1(\mathbb{R}^N),
	\end{equation*}
	where $N\geq3,\;0<\mu<N,\; \alpha\geq0,\; 0<2\alpha+\mu < N$ and
	$2_{\alpha, \mu}<p<2^*_{\alpha, \mu}$. 
	In that work, the authors obtained existence, regularity and symmetry of solutions.
	
	It is worthwhile to mention that the weighted Hardy-Littlewood-Sobolev inequality and the Stein-Weiss type inequality are the key for the proof of existence and multiplicity of solutions for the Stein-Weiss problems. This kind of inequalities are established in \cite{Stein_weiss1958}. Nonlocal elliptic problems driven by the Stein-Weiss problem have been extensively studied in different settings. The main point here is to ensure some $L^p$ estimates for integral operators with singular kernel. This is fundamental in harmonic analysis which has many applications, see \cite{stein2}. For related results we refer the reader to \cite{folland,sawyer,minboaw}.
	
	Recall that  the Problem \eqref{PEdcarlos} with $\alpha= 0$ becomes the Choquard type problem which can be written in the following form:
	\begin{eqnarray}\label{choquard}
		- \Delta u + V(x)u =a(x)\lambda |u|^{q-2} u + \displaystyle\int_{\mathbb{R}^N}\frac{b(y)\vert u(y) \vert^p }{\vert x-y\vert^\mu}dy\;b(x)\vert u\vert^{p-2}u\ \ \hbox{in}\ \mathbb{R }^N, 	u\in H^1(\mathbb{R}^N).
	\end{eqnarray}
	The Choquard type problems have been widely considered in the last decades which has also several physical applications. For instance, assuming that $N=3, \mu=1, V\equiv 1,p=2,\lambda=0$ and $b=1$, the Problem \eqref{choquard} boils down to the Choquard-Pekar equation
	\begin{equation*}
		- \Delta u + u =\left(\displaystyle\frac{1}{\vert x\vert}*\vert u\vert^2\right)u\ \  \hbox{in}\ \mathbb{R}^3, 	u\in H^1(\mathbb{R}^3).
	\end{equation*}
	These kind of problems were introduced in \cite{pekar} assuming that $N =3, \lambda = 0, \alpha = 0$ and $\mu = 1$. The main objective for that work is to describe of a polaron at rest in quantum field theory. The same problem is also used in order to describe an electron trapped in its own hole. The main idea here is to use an approximation to Hartree-Fock theory of one component plasma. Here we refer the interested reader to \cite{Penrose} where was also proposed a model of self-gravitating matter in which the reduction of the quantum state is considered a gravitational phenomenon. Now, this context is known as the nonlinear Schr\"odinger-Newton equation.

	In \cite{MOROZ},  the authors considered the following nonlocal elliptic problem: 
	\begin{equation*}
		-\Delta u+u=\left(I_{\alpha}*\vert u\vert^p\right)\vert u\vert^{p-2}u\;\;\hbox{in}\;\;\mathbb R^N, 	u\in H^1(\mathbb{R}^N),
	\end{equation*}
	where $I_{\alpha}$ is the Riesz potential and
	$p>1$. In that work, the authors ensured the existence of a positive ground state solution, that is, it was found at least one solution which has the lowest energy among any other nontrivial solution. In the same work, the author also established regularity results and positivity of ground states solutions as well as the existence of radially symmetric solutions. Later, in \cite{Moroz2}, the authors proved existence of a non-trivial solution $u \in H^1(\mathbb R^N)$ for the following nonlinear Choquard problem:
	\begin{equation*}
		-\Delta u+u=\left(I_{\alpha}*F(u)\right)f(u)\;\;\hbox{in}\;\;\mathbb R^N, 	u\in H^1(\mathbb{R}^N),
	\end{equation*}
	where $N\geq 3,\;\alpha\in (0,N),\; I_{\alpha}:\mathbb R^N\to \mathbb R$ is the Riesz potential. Under these conditions, the authors considered some existence results in the spirit of Berestycki-Lions for the nonlinearity $F$. 
	
	It is also important to stress that for each solution
	$u$ of Problem \eqref{PEdcarlos} the wave function $\Phi:\mathbb R \times \mathbb R^N \to \mathbb{C}$ defined by $\Phi(t,x)=e ^{it}u(x)$ is a solitary wave of the time-dependent Hartree equation. In fact, the function $\Phi$ satisfies the following nonlocal elliptic problem:
	$$-i\Phi_t-\Delta\Phi+W(x)\Phi-\left(\displaystyle\int_{\mathbb R^N}\displaystyle\frac{b(y)\vert \Phi\vert ^pdy}{\vert x\vert^{\alpha}\vert x-y\vert^{\mu}|y|^\alpha} \right)b(x)\vert \Phi\vert^{p-2}\Phi-\lambda a(x)\vert \Phi\vert^{q-2}\Phi=0,\;\;\hbox{in}\;\; \mathbb R\times\mathbb R^N,$$  
	where $W(x) = V(x)-1, x\in \mathbb R^N$ and $i$ is the imaginary unit. As a consequence, the Problem \eqref{PEdcarlos} can be understood as the stationary nonlinear Hartree equation.

	It is worthwhile to mention that many mathematical or physical applications on existence and multiplicity of solutions have been treated for local elliptic problems in recent decades. In the pioneer work \cite{Amb}, it was explored the sub-supersolution method and the mountain pass theorem proving the existence of a global solution. In that work, it was proved also a multiplicity result which depends on the size of $\lambda > 0$. More specifically, the author considered the following elliptic problem:
	\begin{eqnarray}\label{Amb} \ \ 
		\left\{\begin{array}{ll} 
			- \Delta u =   \lambda a(x) \vert u\vert ^{q-2} u + b(x)\vert u\vert^{p-2}u\ \  \hbox{in}\ \Omega,
			\\ 
			u>0\;\;\hbox{in}\;\; \Omega,\;\;u=0\;\;\hbox{on}\;\;\partial\Omega,
		\end{array}
		\right. 
	\end{eqnarray}
	where $\Omega\subset \mathbb R^N$ is a smooth bounded domain, $a = b = 1, 1 <q < 2 < p < 2^*$ and $2^* = 2N/(N -2)$ is the standard critical Sobolev exponent. For this problem the authors proved existence of at least two positive solutions for $0<\lambda<\lambda_0$ where $\lambda_0 > 0$. Furthermore, the authors proved that Problem \eqref{Amb} has at least one solution for $\lambda=\lambda_0$ and Problem \eqref{Amb} does not admit any weak solution for each $\lambda>\lambda_0$. Later on, considering some more general operators and different assumptions on the nonlinearity, existence and multiplicity of solutions for the Problem \eqref{Amb} was generalized in many directions. For instance, in \cite{Figueredo, Figueredo2}, it was considered some results on existence and multiplicity of solutions for local semilinear or local quasilinear elliptic problems defined in bounded domains. Furthermore, considering a potential $V:\mathbb R^N\to \mathbb R$ and $a= b =1$, the Problem \eqref{PEdcarlos} becomes the following Choquard-type equation
	\begin{eqnarray}\label{marcos e Edcarlos}
		- \Delta u +V(x) u =\lambda |u|^{q-2} u + \displaystyle\int \limits_{\mathbb{R}^N}\frac{\vert  u(y) \vert^p dy}{\vert x-y\vert^\mu} \vert u\vert^{p-2}u\ \  \hbox{in}\ \mathbb{R}^N, u \in H^1(\mathbb{R}^N).
	\end{eqnarray}
	This kind of problem appears in many works considering different assumptions on $V, p$ and $q$. For instance, see the important works \cite{Alves2,Alves3,ding,Gao3,RAY2021,Moroz_Van}. In \cite{RAY2021}, the authors considered the existence of ground and bound states solutions for the Problem \eqref{marcos e Edcarlos} for concave-convex nonlinearities. In that case, the authors looking for the parameter $\lambda>0$ obtaining some existence and multiplicity of solutions. In that work it was also considered the
	nonlinear Rayleigh quotient showing that there exists $\lambda^*>0$ in such way that the Nehari method can be applied for each $\lambda\in(0, \lambda^*)$. It is important to stress that the nonlinear Rayleigh quotient was developed in \cite{YAVDAT2017}. This method was studied also
	in recent years by many authors, see \cite{MYC,YAVDAT2005}. The main difficulty for these kind of problems is to guarantee
	that there exists an extreme value $\lambda^*>0$ such that the fibering maps does not admit any inflection point. Once again, the analysis is done combining the fibering map together with the Nehari method, see for instance \cite{metodoNehari}. The basic idea here is to ensure that fibering map associated to the energy functional admits at least two critical points for each $\lambda\in (0,\lambda^*)$.

	It is important to mention that for our main Problem \eqref{PEdcarlos} the nonlinear Rayleigh quotient can be applied depending on the size of $\lambda > 0$. The first difficulty arises from the fact that our potential $V$ is bounded from below and above by positive constants. Hence, some standard compact embedding from the Sobolev spaces into the Lebesgue space does not work anymore. Since we are looking for the Problem \eqref{PEdcarlos} with the lack of compactness the main ingredient is to write $\mathbb R^N=B(0,R)\cup \left(\mathbb R^N\setminus B(0,R)\right)$. It is also important to stress that $ H^1(B(0,R))\hookrightarrow L^{\gamma}(B(0,R))$ is a continuous embedding for each $\gamma\in[1,2^*]$. Moreover, we observe also that $H^1(B(0,R))\hookrightarrow\hookrightarrow L^{\gamma}(B(0,R))$ is a compact embedding for each $\gamma\in[1,2^*)$. However, for the set $\mathbb R^N\setminus B(0,R)$ the continuous and compact embedding are not available in general. Under these conditions, we shall use the potentials $a,b: \mathbb{R}^N \to \mathbb{R}$ in order to restore some kind of compactness. These can be done thanks to a Brezis-Lieb type identity which allows us to prove that the energy functional for our main problem is well-defined. Furthermore, in our setting due to the Brezis-Lieb type identity, we prove that the energy functional is in $C^1$ class. On the other hand, we prove a regularity result for our main problem which allow us to find existence and multiplicity of positive solutions. In fact, by using the Strong Maximum Principle, we ensure that our main problem admits at least two positive solutions. Once again, the Stein-Weiss problem give us some difficulties in order to control the right hand side of the Problem \eqref{PEdcarlos}. The main aim here is to apply the Brezis-Kato Theorem showing that any weak solution to the elliptic Problem \eqref{PEdcarlos} is sufficient smooth. Indeed, we need to control the nonlocal term using some fine estimates together with the H\"older inequality.  As was previously mentioned, we are concerned with the existence of ground and bound states solutions to the Problem \eqref{PEdcarlos} involving concave-convex nonlinearities. Hence, we need to control the parameter $\lambda > 0$ in order to apply the Lagrange Multiplier Theorem. Here we observe that the fibering map associated to the energy functional has inflection point for $\lambda = \lambda^*$ where $\lambda^*$ is an extreme value provided by the nonlinear Rayleigh quotient. Therefore, assuming that $\lambda \in (0, \lambda^*)$, we are able to prove that the fibering maps does not admit any inflection points. As far as we know, the present work is the first one proving the existence and multiplicity of positive solutions for the Stein-Weiss problem for concave-convex nonlinearities using the Rayleigh quotient. Hence, we are able to consider the largest $\lambda^* > 0$ such that for each $\lambda \in (0, \lambda^*)$ our main problem has at least two positive solutions.  
	
	\subsection{Assumptions and main theorems}
	
	As was mentioned previously, we are concerned with the existence of ground and bound states solutions to the Problem \eqref{PEdcarlos}. Recall that our main problem involves concave-convex nonlinearities of the Stein-Weiss type. Under these conditions, we need to control the parameter $\lambda > 0$ in order to avoid inflections points for the fibering map associated to the energy functional for the Problem \eqref{PEdcarlos}.  To overcome this difficulty, we shall consider the nonlinear Rayleigh quotient showing that there exists $\lambda^*> 0$ such that the Nehari method can be applied for each $\lambda\in (0,\lambda^*)$.  Throughout this work, we shall assume the following hypotheses:\\
	\\	
	\nd$(H_1)$ There exists  $V_0, V_\infty > 0$ such that $0 < V_0 \leq V(x) \leq V_\infty, x \in \mathbb{R}^N$. Assume also that $\lambda >0, \, \alpha\in(0,N), \,\ N\geq3, \,\, 0<2\alpha+\mu< N.$
	\\
	$(H_2)$ Suppose that $a > 0$ and $b>0$ in $\mathbb{R}^N$. Assume also that $1\leq q<2$, \, $2_{\alpha,\mu} < p<2_{\alpha,\mu}^*$ where
	$$\displaystyle 2_{\alpha,\mu}=\frac{2N-2\alpha-\mu}{N}\;\;\;\hbox{and}\;\;\;
	\displaystyle 2_{\alpha,\mu}^*=\frac{2N-2\alpha-\mu}{N-2}.$$\\
	$(H_3)$ 
	It holds $ a\in L^r(\mathbb R^N), \, \, \displaystyle b\in{L}^{\sigma}(\mathbb{R}^N) \cap L^{\sigma + \eta}(\mathbb{R}^N)$ where $\eta  > 0$ is small enough with $2^*= 2N/(N-2)$ and
	$$\displaystyle r =\frac{2^*}{2^*-q}\;\; \hbox{and}\;\;
	\sigma =\frac{2N}{2N-2\alpha-\mu-p(N-2)} = \dfrac{2^*}{2^*_{\alpha, \mu} - p}.$$
	$(H_4)$ It holds $a\in L^{N/2}(\mathbb R^N),\;\;b\in L^{\beta}(\mathbb R^N)\cap L^{\gamma}(\mathbb R^N)$ where
	$$\beta> \displaystyle\frac{2N}{4-(N-2)(p-2)}, \gamma = \frac{N\beta}{\beta[2-(N-2)(p-1)+(N-2\alpha-\mu)]-N} \,\,\ \mbox{and} \,\, p > \frac{2(N - 2 \alpha - \mu)}{N-2} - \frac{2^*}{\beta}.$$
	Now, we shall consider some examples for our setting. Namely, we consider the following examples:
	\begin{exm}
		Consider the potential $a: \mathbb{R}^N \to \mathbb{R}$ given by $a(x)= 1/(1+\vert x\vert^2)^{\gamma_{1}}, x \in \mathbb{R}^N$. It is not hard to verify that $a\in L^r(\R^N)\cap L^{N/2}(\R^N)$ holds true for each $\gamma_1 > (2N-q(N-2))/4$. In fact, by using the Co-area Theorem, we obtain the following estimates:
		\begin{eqnarray*}
			\displaystyle\int_{\R^N}\left(\displaystyle\frac{1}{\left(1+\vert x\vert^2\right)^{\gamma_1}}\right)^rdx &=& c\displaystyle\int_{0}^{+\infty}\displaystyle\frac{\theta^{N-1}}{(1+\theta^2)^{\gamma_1r}}d\theta \leq c_1+c\displaystyle\int_{1}^{+\infty}\displaystyle\frac{\theta^{N-1}}{(\theta^2)^{\gamma_1r}}d\theta = c_1+c\left.\displaystyle\frac{\theta^{N-2\gamma_1r}}{N-2\gamma_1r}\right|^{+\infty}_1<\infty.
		\end{eqnarray*}
	\end{exm}
	\begin{exm} Consider the potential $b: \mathbb{R}^N \to \mathbb{R}$ given by $b= 1/(1+\vert x\vert^2)^{\gamma_2}$. Assume also that $$\gamma=\frac{N\beta}{\beta[2-(N-2)(p-1)+ (N-2\alpha-\mu)]-N}.$$ It is easy to verify that $b\in L^{\sigma}(\R^N)\cap L^{\beta}(\R^N)\cap L^{\gamma}(\mathbb R^N)$ holds for each $$\gamma_2>\max\left\{\displaystyle\frac{2N-2\alpha-\mu-p(N-2)}{4},\displaystyle\frac{N}{2\gamma},\displaystyle\frac{N}{2\beta}\right\}.$$
		Indeed, for $\tau= \sigma,$ or $\tau=\beta,$ or $\tau=\varrho,$ and taking into account  the Co-area Theorem, we infer that		
		\begin{eqnarray*}
			\displaystyle\int_{\R^N}\left(\displaystyle\frac{1}{\left(1+\vert x\vert^2\right)^{\gamma_2}}\right)^\tau dx &=& c\displaystyle\int_{0}^{+\infty}\displaystyle\frac{\theta^{N-1}}{(1+\theta^2)^{\gamma_2\tau}}d\theta \leq  c_1+c\displaystyle\int_{1}^{+\infty}\displaystyle\frac{\theta^{N-1}}{(\theta^2)^{\gamma_2\tau}}d\theta =c_1+c\left.\displaystyle\frac{\theta^{N-2\gamma_2\tau}}{N-2\gamma_2\tau}\right|^{+\infty}_1<\infty.
		\end{eqnarray*}
		In particular, we see that $b\in L^{\tau}(\R^N)$.
	\end{exm}
	Now, we are looking for the working space for our main problem. Indeed, we consider the usual Sobolev space $H^1(\mathbb R^N)$ which is endowed with 
	the inner product and the norm given by 
	$$
	\left<u,\varphi\right>=\displaystyle\int\limits_{\mathbb R^N}(\nabla u\nabla \varphi+ V(x) u\varphi) dx,  \,\,	\Vert u\Vert=
	\left(\displaystyle\int_{\mathbb R^N}\vert \nabla u\vert^2+ V(x) \vert u\vert^2 dx\right)^\frac{1}{2},\;\;u, \phi \in H^1(\mathbb R^N).
	$$
	Furthermore, we consider some continuous and compact embedding, see for instance \cite[Theorem 1.2.1, Theorem 1.2.2]{BADIALE}. More specifically, we can state the following result:
	\begin{prop}\label{imersaomega}
		Let $\Omega\subset\mathbb R^N$ be a bounded domain and $N\geq3$. Then $H^1(\Omega)\hookrightarrow L^{\sigma}(\Omega)$ holds for each  $1\leq \sigma\leq2^*$.
		Furthermore, the previous embedding is compact if and only if
		$1 \leq \sigma <2^*$ where $2^*= 2N/)N-2)$ is the so called critical Sobolev exponent.
	\end{prop}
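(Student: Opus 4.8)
\nd The plan is to deduce the statement from two classical facts: the Sobolev embedding theorem and the Rellich--Kondrachov compactness theorem. First I would prove the borderline continuous inclusion $H^1(\Omega)\hookrightarrow L^{2^*}(\Omega)$. Since $\Omega$ is a bounded domain (with the boundary regularity tacitly assumed in \cite{BADIALE}; in the applications of this paper $\Omega$ is a ball, so a linear extension operator $E\colon H^1(\Omega)\to H^1(\mathbb R^N)$ is available), one extends $u\in H^1(\Omega)$ to $Eu\in H^1(\mathbb R^N)$ and applies the Gagliardo--Nirenberg--Sobolev inequality $\|v\|_{L^{2^*}(\mathbb R^N)}\le C_N\|\nabla v\|_{L^2(\mathbb R^N)}$, valid for $N\ge 3$, which upon restriction to $\Omega$ gives $\|u\|_{L^{2^*}(\Omega)}\le C\|u\|_{H^1(\Omega)}$. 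Because $\Omega$ is bounded, Hölder's inequality yields the continuous inclusion $L^{2^*}(\Omega)\hookrightarrow L^{\sigma}(\Omega)$ for every $1\le\sigma\le 2^*$, and composing the two embeddings gives the continuous embedding in the full range.

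For the compactness when $1\le\sigma<2^*$, I would invoke the Fréchet--Kolmogorov (Riesz) compactness criterion in $L^p$. Let $(u_n)$ be a bounded sequence in $H^1(\Omega)$; extend it to a bounded sequence in $H^1(\mathbb R^N)$ with supports contained in a fixed bounded set. The translation estimate $\|u_n(\cdot+h)-u_n\|_{L^2}\le |h|\,\|\nabla u_n\|_{L^2}$ shows the family has uniformly small $L^2$-shifts, and the common bounded support rules out any escape of mass at infinity; hence $(u_n)$ is precompact in $L^2(\Omega)$. Extracting a subsequence with $u_n\to u$ in $L^2(\Omega)$, the interpolation inequality $\|u_n-u\|_{L^{\sigma}}\le \|u_n-u\|_{L^2}^{\theta}\,\|u_n-u\|_{L^{2^*}}^{1-\theta}$ with a suitable $\theta\in(0,1]$, together with the uniform bound on $\|u_n-u\|_{L^{2^*}}$ coming from the first part, forces $u_n\to u$ in $L^{\sigma}(\Omega)$. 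This proves compactness for every $\sigma\in[1,2^*)$.

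Finally, to show that compactness is lost exactly at $\sigma=2^*$, I would construct a concentrating sequence. Fix $x_0\in\Omega$ and $\varphi\in C_c^\infty(B_1(0))$ with $\|\varphi\|_{L^{2^*}}=1$, and set $u_n(x)=n^{(N-2)/2}\varphi\big(n(x-x_0)\big)$ for $n$ so large that $\operatorname{supp} u_n\subset\Omega$. A change of variables gives $\|\nabla u_n\|_{L^2}=\|\nabla\varphi\|_{L^2}$, constant in $n$, and $\|u_n\|_{L^2}=n^{-1}\|\varphi\|_{L^2}\to 0$, so $(u_n)$ is bounded in $H^1(\Omega)$ and $u_n\rightharpoonup 0$, while $\|u_n\|_{L^{2^*}}=\|\varphi\|_{L^{2^*}}=1$ for every $n$; thus no subsequence converges in $L^{2^*}(\Omega)$, and the embedding at $\sigma=2^*$ is not compact. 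I expect the only delicate point to be the $L^2$-equicontinuity of translations, uniformly over the bounded sequence --- equivalently, the difference-quotient characterization of $H^1$ --- since this is precisely the ingredient that makes the Fréchet--Kolmogorov criterion applicable; the remaining steps are routine applications of Hölder's inequality and scaling, and in the write-up it would be perfectly legitimate to simply cite \cite[Theorem 1.2.1, Theorem 1.2.2]{BADIALE}.
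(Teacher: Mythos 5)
The paper offers no proof of this proposition at all: it is stated as a classical fact and attributed to \cite[Theorem 1.2.1, Theorem 1.2.2]{BADIALE}, so your self-contained argument is genuinely a different (and more informative) route. Your three steps are all sound: the extension-plus-Gagliardo--Nirenberg--Sobolev argument for the continuous embedding, the Fr\'echet--Kolmogorov criterion via the translation estimate $\|u(\cdot+h)-u\|_{L^2}\le |h|\,\|\nabla u\|_{L^2}$ for compactness below $2^*$, and the concentrating bubbles $u_n(x)=n^{(N-2)/2}\varphi(n(x-x_0))$ for the failure of compactness at $\sigma=2^*$ (the scaling computations $\|u_n\|_{2^*}=\|\varphi\|_{2^*}$, $\|\nabla u_n\|_2=\|\nabla\varphi\|_2$, $\|u_n\|_2=n^{-1}\|\varphi\|_2$ are all correct, and they settle the ``only if'' direction, which the textbook citation does not explicitly address). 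Two small points deserve attention in a final write-up. First, as you note, the extension operator requires some boundary regularity; for a completely arbitrary bounded domain the embedding $H^1(\Omega)\hookrightarrow L^{2^*}(\Omega)$ can fail (cusp domains), so the proposition as literally stated is slightly too generous --- but since the paper only ever uses $\Omega=B(0,R)$, this is harmless. Second, the interpolation inequality $\|f\|_{L^\sigma}\le\|f\|_{L^2}^{\theta}\|f\|_{L^{2^*}}^{1-\theta}$ only covers $2\le\sigma<2^*$; for $1\le\sigma<2$ you should instead conclude directly from $L^2$-convergence and H\"older's inequality on the bounded set $\Omega$. Neither issue is a real gap.
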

	\begin{rmk}\label{imersaoRN}
		Let $N\geq 3$ be fixed. It is important to emphasize that the embeddings $H^1(\mathbb R^N)\hookrightarrow L^{\sigma}(\mathbb R^N)$ are only continuous for each $\sigma\in [1,2^*]$. In other words, for each $\sigma\in [1,2^*]$, there exists  $S_\sigma > 0$ which does not depend on $u$ such that
		$\Vert u\Vert_{\sigma}\leq S_{\sigma}\Vert u\Vert$ for all $u\in H^1(\mathbb R^N ).$
	\end{rmk}
	
	Now, we mention that the energy functional $J_{\lambda}:H^1(\mathbb{R}^N) \longrightarrow \mathbb R$ is associated to the Problem \eqref{PEdcarlos} which is defined by
	\begin{equation}\label{FE}
		\displaystyle J_{\lambda}(u)=\frac{1}{2}{\Vert u \Vert}^2-\frac{\lambda}{q}\displaystyle\int_{\mathbb{R}^N}a(x){\vert u(x) \vert}^q dx
		-\frac{1}{2p}\displaystyle\int_{\mathbb R^N}\int_{\mathbb R^N}\frac{b(y)\vert u(y)\vert^pb(x)\vert u(x)\vert^p }{\vert x\vert^\alpha\vert x-y\vert^\mu \vert y\vert^\alpha}dxdy.
	\end{equation}
	For an easy reference we shall consider the following notations:
	\begin{equation}\label{A(u)}
		A(u)=\displaystyle\int_{\mathbb{R}^N}a(x){\vert u(x) \vert}^q dx,\;\;u\in H^1(\mathbb R^N),
	\end{equation}
	and
	\begin{equation}\label{B(u)}  B(u)=\displaystyle\displaystyle\int_{\mathbb R^N}\int_{\mathbb R^N}\frac{b(y)\vert u(y)\vert^pb(x)\vert u(x)\vert^p }{\vert x\vert^\alpha\vert x-y\vert^\mu \vert y\vert^\alpha}dxdy, \;\; u\in H^1(\mathbb R^N).
	\end{equation}
	Furthermore, we shall consider the following definitions:
	\begin{equation*}
		H(u,\varphi)=\displaystyle\int\limits_{\mathbb R^N}a(x)\vert u\vert^{q-2}u\varphi\;dx,\;\;u,\varphi\in H^1(\mathbb R^N),
	\end{equation*}
	\begin{equation*}
		D(u,\varphi)=\displaystyle\int\limits_{\mathbb R^N}\int\limits_{\mathbb R^N}\frac{b(y)\vert u(y)\vert^p}{\vert x\vert^\alpha\vert x-y\vert^\mu \vert y\vert^\alpha}b(x)\vert u(x)\vert^{p-2}u\varphi\;dx dy,\;\;u,\varphi\in H^1(\mathbb R^N).
	\end{equation*}
	
	It is important to stress that $u \in H^1(\mathbb R^N)$ is said to be a weak solutions for the Problem \eqref{PEdcarlos} whenever
	\begin{equation*}
		\displaystyle\int\limits_{\mathbb R^N}(\nabla u\nabla \varphi+u\varphi) dx= \lambda H(u,\varphi)+ D(u,\varphi),
	\end{equation*}
	Under these conditions, finding a weak solution $u \in H^1(\mathbb R^N)$  for the Problem \eqref{PEdcarlos} is equivalent to find critical points for the functional $J_\lambda$. Assuming hypotheses $(H_1)-(H_3)$ the energy functional is in $C^1$ class. Furthermore, the Gauteaux derivative of $J_\lambda$ is given by
	\begin{eqnarray}\label{J'}
		J_{\lambda}'(u)\varphi  & = & \langle u,\varphi\rangle-\lambda H(u,\varphi)-D(u,\varphi), \varphi \in H^1(\mathbb{R}^N).
	\end{eqnarray}
	Now, we shall consider the Nehari manifold as follows:
	\begin{equation}\label{Nehari}
		\displaystyle \mathcal {N}_{\lambda}= \left\{u \in {H^1}{(\mathbb R^N)}\setminus\{0\},  J_{\lambda}'(u)(u) = 0 \right\}.
	\end{equation}
	It is not difficult to verify that the functional $u \mapsto J_{\lambda}''(u)(u,u)$ is well defined for each $u\in H^1(\mathbb R^N)$. Furthermore, we infer that 
	\begin{equation}\label{J''}
		J_{\lambda}''(u)(u,u) = \Vert u\Vert^2-\lambda(q-1)A(u)- (2p-1)B(u).
	\end{equation}
	Hence, the functional $u \mapsto J_{\lambda}''(u)(u,u)$ is in $C^1$ class. The Nehari manifold can be splitted  as follows:
	\begin{equation*}\label{Nehari^+}
		\displaystyle\mathcal {N}_{\lambda}^+= \left\{u \in \mathcal {N}_{\lambda}\;\big |\; J_{\lambda}''(u)(u,u) > 0 \right\},
	\end{equation*}
	\begin{equation*}\label{Nehari^-}
		\displaystyle \mathcal {N}_{\lambda}^-=\left\{u \in \mathcal {N}_{\lambda} \;\big |\; J_{\lambda}''(u)(u,u) < 0 \right\},
	\end{equation*}
	\begin{equation*}\label{Nehari^0}
		\displaystyle \mathcal {N}_{\lambda}^0= \left\{u \in \mathcal {N}_{\lambda}\;\big |\; J_{\lambda}''(u)(u,u) = 0 \right\}.
	\end{equation*}
	The main objective in this work is to find solutions for  the following minimization problems:
	\begin{equation*}\label{C_+}
		C_{\mathcal {N}_{\lambda}^+}:=\inf\{J_{\lambda}(u)\;\big |\; u\in \mathcal {N}_{\lambda}^+\},
		C_{\mathcal {N}_{\lambda}^-}:=\inf\{J_{\lambda}(u)\;\big |\; u\in \mathcal {N}_{\lambda}^-\}.
	\end{equation*}
	In order to do that we consider the functionals $R_n;R_e : H^1(\mathbb R^N)\backslash \{0\}\rightarrow \mathbb R$ of $C^1$ class in the following form
	\begin{equation}\label{R_n}
		R_n(u)=\frac{\Vert u\Vert^2-B(u)}{A(u)},
		R_e(u)=\displaystyle\frac{\displaystyle\frac{1}{2}\Vert u\Vert^2-\displaystyle\frac{1}{2p}B(u)}{\displaystyle\frac{1}{q}A(u)}.
	\end{equation}
	Hence, we obtain that $R_n(u) = \lambda$ if and only if $u \in \mathcal{N}_\lambda$. Analogously, $R_e(u) = \lambda$ if and only if $u \in \mathcal{E}$ where $\mathcal{E} = \{ u \in H^1(\mathbb{R}^N) \setminus \{0\} : J_\lambda(u) = 0 \}$. Therefore, we can consider the following extremes:
	\begin{equation*}\label{lambda^*}
		\lambda^*:=\inf_{u \in H^1(\mathbb R^N \backslash\{0\})}\max_{t>0}R_n(tu),
		\lambda_*:=\inf_{u \in H^1(\mathbb R^N) \backslash\{0\}}\max_{t>0}R_e(tu).
	\end{equation*}
	Now, we are stay in position to state our main result as follows:
	\begin{theorem}\label{teorema} 
		Suppose $(H_1)-(H_3)$. Then we obtain that $0<\lambda_*<\lambda^*<\infty$ and the Problem \eqref{PEdcarlos} admits at least two nonegative solutions  $u_{\lambda},v_{\lambda} \in H^1(\mathbb R^N)$ for each $\lambda\in (0,\lambda^*)$. Moreover, assuming also that $(H_4)$ holds, we obtain that $u_\lambda, v_\lambda > 0$ in $\mathbb{R}^N$. Furthermore, the solutions $u_{\lambda},v_{\lambda}$ satisfy the following assertions:
		\begin{itemize}
			\item [$a)$] $J''_{\lambda}(u_{\lambda})(u_{\lambda},u_{\lambda})>0$ and $J''_{\lambda}(v_{\lambda})(v_\lambda, v_\lambda)<0.$
			\item [$b)$] The function $u_{\lambda}$ is a ground state solution  and $J_{\lambda}(u_{\lambda})<0.$
			\item[c)] The function $v_{\lambda}$ is a bound state solution which has the following properties:
			\begin{itemize}
				\item[$i)$] For each $\lambda\in(0,\lambda_*)$, we obtain that $J_{\lambda}(v_{\lambda})>0.$
				\item[$ii)$] For $\lambda=\lambda_*$ it holds $J_{\lambda}(v_{\lambda})=0.$
				\item[$iii)$] For each $\lambda\in(\lambda_*,\lambda^*)$ there holds $J_{\lambda}(v_{\lambda})<0.$
			\end{itemize}
		\end{itemize}
	\end{theorem}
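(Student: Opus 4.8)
The plan is to follow the now-classical Nehari-manifold strategy of Ambrosetti--Brezis--Cerami, but with the parameter selection controlled by the nonlinear Rayleigh quotients $R_n$ and $R_e$ defined in \eqref{R_n}. First I would establish the purely variational facts about the extremes. Since $2_{\alpha,\mu}<p<2^*_{\alpha,\mu}$, for fixed $u\neq 0$ the function $t\mapsto R_n(tu)=t^{2-q}\|u\|^2/A(u)-t^{2p-q}B(u)/A(u)$ is strictly concave in the relevant range, vanishes at $t=0$, tends to $-\infty$, and therefore has a unique maximum point $t_n(u)>0$ at which $\frac{d}{dt}R_n(tu)=0$; an identical analysis applies to $R_e$. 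Taking infima over $u$ on the unit sphere and using Remark \ref{imersaoRN} together with hypothesis $(H_3)$ (which guarantees $A$ and $B$ are finite and continuous on $H^1(\mathbb{R}^N)$) gives $0<\lambda_*<\lambda^*<\infty$; the strict inequality $\lambda_*<\lambda^*$ comes from comparing the two fibering maps, since $R_e(tu)<R_n(tu)$ for $t$ near the maximizer because $\tfrac12>\tfrac1q\cdot\tfrac{q}{2}$ fails only at $q=2$ and the concave/convex coefficients differ by strictly positive amounts when $q<2<p$.

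Next I would prove that for $\lambda\in(0,\lambda^*)$ the set $\mathcal{N}_\lambda^0=\{0\}$, i.e. $\mathcal{N}_\lambda=\mathcal{N}_\lambda^+\cup\mathcal{N}_\lambda^-$ with both pieces nonempty and the Nehari manifold a genuine $C^1$-manifold. This is exactly where the choice $\lambda<\lambda^*$ is used: if some $u\in\mathcal{N}_\lambda^0$ existed, then $t=1$ would be simultaneously a zero and a critical point of $t\mapsto R_n(tu)-\lambda$, forcing $\lambda=\max_{t>0}R_n(tu)\geq\lambda^*$, a contradiction. The fibering-map analysis then shows that every $u\neq0$ has exactly two projections onto $\mathcal{N}_\lambda$, one in $\mathcal{N}_\lambda^+$ (a local minimum of $t\mapsto J_\lambda(tu)$, lying where $R_n$ is increasing) and one in $\mathcal{N}_\lambda^-$ (a local maximum, where $R_n$ is decreasing). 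A direct computation on $\mathcal{N}_\lambda^+$ using $J_\lambda'(u)u=0$ gives $J_\lambda(u)=(\tfrac12-\tfrac1{2p})\|u\|^2-\lambda(\tfrac1q-\tfrac1{2p})A(u)<0$ on $\mathcal{N}_\lambda^+$, so $C_{\mathcal{N}_\lambda^+}<0$, while $C_{\mathcal{N}_\lambda^-}$ is bounded below away from $-\infty$; both functionals are coercive on their respective pieces because the sign of $J_\lambda''(u)(u,u)$ pins the relative size of $\|u\|^2$, $A(u)$, $B(u)$.

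The analytic core of the argument is then the attainment of $C_{\mathcal{N}_\lambda^+}$ and $C_{\mathcal{N}_\lambda^-}$, and this is the step I expect to be the main obstacle because of the lack of compactness in $\mathbb{R}^N$ noted in the introduction. Take a minimizing sequence $(u_k)$; coercivity gives boundedness, hence $u_k\rightharpoonup u$ weakly. The key is to upgrade this to strong convergence of the relevant nonlinear terms: here I would invoke the Brezis--Lieb type identity proved earlier in the paper for $A$ and $B$ (this is where the integrability assumptions $a\in L^r$, $b\in L^\sigma\cap L^{\sigma+\eta}$ restore compactness — splitting $\mathbb{R}^N=B(0,R)\cup(\mathbb{R}^N\setminus B(0,R))$, using the compact embedding $H^1(B(0,R))\hookrightarrow\hookrightarrow L^\gamma$ on the ball and the smallness of $\|a\|_{L^r(\mathbb{R}^N\setminus B(0,R))}$, $\|b\|_{L^\sigma(\mathbb{R}^N\setminus B(0,R))}$ outside), to conclude $A(u_k)\to A(u)$ and $B(u_k)\to B(u)$ with $u\neq0$. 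One then checks $u\in\mathcal{N}_\lambda$, that $u$ lies in the correct component (using that $\mathcal{N}_\lambda^0=\{0\}$ forbids escaping to the boundary), and that $J_\lambda(u)\leq\liminf J_\lambda(u_k)$ by weak lower semicontinuity of $\|\cdot\|^2$, so the infimum is attained. Replacing $u$ by $|u|$ (which does not increase $J_\lambda$ and preserves membership in $\mathcal{N}_\lambda$, since $A$, $B$ and $\|\nabla|u|\|_2\le\|\nabla u\|_2$ are insensitive to sign) yields nonnegative minimizers. By the Lagrange multiplier theorem on the manifold $\mathcal{N}_\lambda$ — whose validity again rests on $\mathcal{N}_\lambda^0=\{0\}$ so that constraint qualification holds — these minimizers $u_\lambda\in\mathcal{N}_\lambda^+$, $v_\lambda\in\mathcal{N}_\lambda^-$ are critical points of $J_\lambda$, hence weak solutions of \eqref{PEdcarlos}; this gives parts $a)$ and $b)$ immediately, with $u_\lambda$ a ground state since $C_{\mathcal{N}_\lambda^+}=C_{\mathcal{N}_\lambda}$ (any element of $\mathcal{N}_\lambda^-$ has larger energy than its $\mathcal{N}_\lambda^+$ partner). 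For the strict positivity under $(H_4)$, I would apply the regularity result of the paper (Brezis--Kato bootstrap controlling the Stein--Weiss nonlocal term via Hölder with exponents dictated by $(H_4)$) to get $u_\lambda,v_\lambda\in L^\infty_{loc}$ and then local Hölder continuity, after which the strong maximum principle applied to $-\Delta u_\lambda+V(x)u_\lambda\ge0$ forces $u_\lambda>0$ (and likewise $v_\lambda$).

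Finally, for the sign of $J_\lambda(v_\lambda)$ in part $c)$ I would use the second Rayleigh quotient $R_e$ and the set $\mathcal{E}=\{u:J_\lambda(u)=0\}$. Since $v_\lambda\in\mathcal{N}_\lambda^-$ realizes the maximum of $t\mapsto J_\lambda(tv_\lambda)$ over $t>0$ at $t=1$, the sign of $J_\lambda(v_\lambda)$ is governed by whether the fibering line $t\mapsto tv_\lambda$ meets $\mathcal{E}$, equivalently whether $\lambda\lessgtr\max_{t>0}R_e(tv_\lambda)$. Using $\lambda_*=\inf_u\max_{t>0}R_e(tu)$ one shows: for $\lambda<\lambda_*$ the line never reaches $\mathcal{E}$ and $J_\lambda$ stays positive along it past the $\mathcal{N}_\lambda^-$ point, giving $J_\lambda(v_\lambda)>0$; at $\lambda=\lambda_*$ the extremal configuration gives $J_\lambda(v_\lambda)=0$ exactly; and for $\lambda_*<\lambda<\lambda^*$ the line crosses $\mathcal{E}$ strictly before $t=1$, so $J_\lambda(v_\lambda)<0$. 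Here one must check that the minimizer $v_\lambda$ of $C_{\mathcal{N}_\lambda^-}$ can be compared with the infimum defining $\lambda_*$ — this requires the monotonicity of $\lambda\mapsto C_{\mathcal{N}_\lambda^-}$ and the continuity of the map $\lambda\mapsto\max_{t>0}R_e(tv_\lambda)$, which follow from the implicit function theorem applied to the (nondegenerate, since $\lambda<\lambda^*$) fibering equation. Assembling these pieces completes the proof of Theorem \ref{teorema}.
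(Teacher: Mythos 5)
Your proposal follows essentially the same route as the paper: extremal values via the nonlinear Rayleigh quotients, $\mathcal{N}_\lambda^0=\emptyset$ for $\lambda<\lambda^*$, minimization on $\mathcal{N}_\lambda^{\pm}$ with compactness restored by the weights $a,b$ through the Brezis--Lieb machinery and the splitting $\mathbb{R}^N=B(0,R)\cup(\mathbb{R}^N\setminus B(0,R))$, Lagrange multipliers, Brezis--Kato regularity plus the strong maximum principle, and the sign of $J_\lambda(v_\lambda)$ read off from the position of $t_e(v_\lambda)$ relative to $t_n^-(v_\lambda)=1$. The one step where your stated justification does not hold up is the strict inequality $\lambda_*<\lambda^*$: the comparison ``$\tfrac12>\tfrac1q\cdot\tfrac q2$'' is an identity rather than a strict inequality, and what is actually needed (and what the paper proves) is that $\Lambda_e(u)=C\,\Lambda_n(u)$ with the $u$-independent constant $C=\widetilde C_{p,q}/C_{p,q}$, whose membership in $(0,1)$ reduces to the elementary but genuinely nontrivial estimate $q\,p^{(2-q)/(2(p-1))}<2$ for $1\le q<2<p$, established there by a small convexity argument.
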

	
	\begin{rmk}
		The hypothesis $(H_4)$ is used in order to prove that any weak solution $u \in H^1(\mathbb{R}^N)$ for the Problem \eqref{PEdcarlos} is in $W^{2,\theta}_{loc}(\mathbb R^N)\cap C^{1,\sigma}_{loc}(\mathbb R^N)$ for all $\theta \in (1,\infty)$ and for some $\sigma \in (0,1)$, see Proposition \ref{regular} ahead. The main idea here is to employ the Brezis-Kato Theorem, see \cite[Lemma B.3]{STRUWE}. Hence, by using the Strong Maximum Principle, we ensure that the weak solutions provided by Theorem \ref{teorema} satisfy  $u_\lambda > 0$ and $v_\lambda > 0$ in $\mathbb{R}^N$. In order to do that we consider the following assumption 
		$$ p > \frac{2(N - 2 \alpha - \mu)}{N-2} - \frac{2^*}{\beta}, N \geq 3.$$
		The last inequality is satisfied for $\beta > 0$ large enough and $\mu \geq 2, \alpha > 0$ with $0 < 2 \alpha + \mu < N, N \geq 3$. 
		
	\end{rmk}
	
	\subsection{Notation} Throughout this work we shall use the following notation:
	\begin{itemize}
		\item The characteristic function for the set $A \subset \mathbb{R}^N$ is denoted by $\chi_A$.
		\item The norm in $L^{r}(\mathbb{R}^N)$ and $L^{\infty}(\mathbb{R}^N)$, will be denoted respectively by $\|\cdot\|_{r}$ and $\|\cdot\|_{\infty},  r \in [1, \infty)$.
		\item $S_r$ denotes the best constant for the embedding $H^1(\mathbb{R}^N)\hookrightarrow L^r(\mathbb{R}^N)$ for each $r \in [2, 2^*]$.
		\item $B(x_0, R)$ denotes the open ball centered at $x_0 \in \mathbb{R}^N$ and radius $R > 0$.
		\item Given $r > 1$ the dual number of $r$ is denoted by $r' = r/(r -1)$.
	\end{itemize}
	
	\subsection{Outline} The remainder of this work is organized as follows: In the forthcoming section we consider some results concerning on some basic tools for our main problem. In Section 3 we consider some results taking into account the Nonlinear Rayleigh Quotient. The Section 4 is devoted to prove some results around the Nehari method. Here we also consider a regularity result for our main problem. Section 5 is devoted to the proof of our main result looking for the energy levels for each minimizer in the Nehari manifolds $\mathcal{N}_{\lambda}^+$ and $\mathcal{N}_{\lambda}^-$. 
	
	\section{Preliminaries}
	In this section we prove some basic properties for the energy functional $J_\lambda$ taking into account  the Nehari method and the nonlinear Raleigh quotient. Firstly, we shall prove some tools for nonlocal elliptic problems with of the Stein-Weiss type.  The main ingredient in the present work is to apply the Stein-Weiss type inequality and the weighted Hardy-Littlewood-Sobolev inequality, see \cite{Stein_weiss1958}. On this subject we refer also the reader to \cite{lieb}. These results can be stated as follows:
	\begin{prop}[Stein-Weiss type inequality]\label{Stein-Weiss}
		Let  $ N\geq 3, s_1, r_1 > 1,\mu > 0, \alpha > 0, 0<2\alpha+\mu < N$ be fixed.  Assume also that $f \in L^{r_1}(\mathbb R^N)$ and $h \in L^{s_1}(\mathbb R^N)$ where $r_1'$ and $s_1'$ are the conjugate exponent of $r_1 > 1$ and $s_1 > 1$, respectively. Then there exists a constant $C(\alpha,\mu,N,s_1,r_1) > 0$ such that
		\begin{equation*}\label{SW}
			\displaystyle\int_{\mathbb R^N}\int_
			{\mathbb R^N}\frac{f(x)h(y)}{\vert x \vert^{\alpha}\vert x-y \vert^{\mu}\vert y \vert ^{\alpha}}dxdy\leq C(\alpha,\mu,N,s_1,r_1)\Vert f \Vert_{r_1} \Vert h \Vert_{s_1},
		\end{equation*}
		where
		\begin{equation*}\label{des1}
			0 < \alpha<\min\left\{\displaystyle\frac{N}{r_1'},\displaystyle\frac{N}{s_1'}\right\}\;\;\hbox{and}\;\;\;\frac{1}{r_1}+\frac{1}{s_1}+\frac{2\alpha+\mu}{N}=2.
		\end{equation*}
	\end{prop}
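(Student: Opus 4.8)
The plan is to reduce the bilinear inequality to a weighted mapping property of the Riesz potential and then deduce that property from the unweighted Hardy--Littlewood--Sobolev inequality by a dyadic decomposition adapted to the singularities of the weights at the origin; this is the classical route of Stein and Weiss \cite{Stein_weiss1958} (see also \cite{lieb}). We may assume $f,h\ge 0$.

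\emph{Step 1 (duality).} Writing the left-hand side as $\int_{\mathbb R^N} f(x)\,|x|^{-\alpha}\bigl[(|\cdot|^{-\alpha}h)\ast|\cdot|^{-\mu}\bigr](x)\,dx$ and applying H\"older's inequality with the conjugate pair $(r_1,r_1')$, it suffices to establish the operator bound $\bigl\|\,|x|^{-\alpha}\bigl[(|\cdot|^{-\alpha}h)\ast|\cdot|^{-\mu}\bigr]\,\bigr\|_{r_1'}\le C\|h\|_{s_1}$. A quick check of homogeneities shows that the hypothesis $\frac1{r_1}+\frac1{s_1}+\frac{2\alpha+\mu}{N}=2$ is exactly the scaling relation forced on the exponents by this inequality, so nothing is lost in the reduction.

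\emph{Step 2 (decomposition).} To prove the operator bound I would split the convolution integral over $y$ into the three regions $R_1=\{|y|\le|x|/2\}$, $R_2=\{|x|/2<|y|<2|x|\}$ and $R_3=\{|y|\ge 2|x|\}$. On $R_1$ one has $|x-y|\sim|x|$ and on $R_3$ one has $|x-y|\sim|y|$, so on those two regions the contribution collapses to a one-dimensional weighted Hardy inequality in the radial variable; its validity is guaranteed precisely by $0<\alpha<\min\{N/r_1',N/s_1'\}$, which is what makes the relevant weights locally integrable near the origin and their tails summable near infinity. On the middle region $|x|\sim|y|$, I would further decompose into dyadic annuli $|x|\sim|y|\sim 2^{k}$, $k\in\mathbb Z$; there the weight $|x|^{-\alpha}|y|^{-\alpha}$ is comparable to the constant $2^{-2\alpha k}$ and can be pulled out, after which the unweighted Hardy--Littlewood--Sobolev inequality (with a suitable auxiliary exponent tied to $\mu$) applies on each annulus, and one sums the resulting series over $k$ by H\"older's inequality in the discrete variable $k$. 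Collecting the three contributions yields the constant $C(\alpha,\mu,N,s_1,r_1)>0$.

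The main obstacle is Step 2 on the diagonal region: one has to choose the auxiliary exponents in the per-annulus application of Hardy--Littlewood--Sobolev and verify that the geometric-type series over the dyadic scales $k\in\mathbb Z$ converges, which is exactly where the strict inequality $2\alpha+\mu<N$ and the precise balance between $r_1,s_1,\alpha,\mu$ and $N$ enter. An equivalent and historically original alternative is to establish weak-type endpoint estimates for the operator at the two exponents permitted by the constraints $\alpha<N/r_1'$ and $\alpha<N/s_1'$ and then interpolate via the Marcinkiewicz theorem; either way the statement is classical.
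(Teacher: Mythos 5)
The paper does not prove this proposition at all: it is quoted verbatim as a classical result, with the reader referred to \cite{Stein_weiss1958} and \cite{lieb}. So there is no ``paper proof'' to compare against; what you have written is a sketch of the standard argument for the original theorem, and as a roadmap it is essentially correct. Your Step 1 duality reduction is exactly the passage from Proposition~2.1 to Proposition~2.2 of the paper (the weighted Hardy--Littlewood--Sobolev bound), and your homogeneity check of the exponent relation is right. Step 2 is the classical three-region decomposition, but two points are thinner than you suggest. First, on the diagonal region, H\"older's inequality in the discrete variable $k$ alone does not close the argument: you need to observe that the dyadic pieces have (essentially) disjoint supports in $x$ and boundedly overlapping supports in $y$, so that $\sum_k\|T_kh\|_{r_1'}^{r_1'}\lesssim\sum_k\|h_k\|_{s_1}^{r_1'}$, and then invoke the embedding $\ell^{s_1}\hookrightarrow\ell^{r_1'}$, which is legitimate precisely because $1/r_1+1/s_1=2-(2\alpha+\mu)/N>1$ forces $r_1'\ge s_1$. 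Second, on the off-diagonal regions the reduction is not to a convergent pure power (a na\"ive H\"older bound on the inner integral produces a homogeneous function that is never globally in $L^{r_1'}$); one must genuinely run the weighted one-dimensional Hardy inequality, whose hypotheses are the conditions $\alpha<N/s_1'$ and $\alpha+\mu-N/s_1>- N/r_1'$ hidden in the exponent balance. Since the statement is a cited classical theorem rather than a contribution of the paper, your level of detail is appropriate, but be aware that these are the two places where the sketch would need real work to become a proof.
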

	\begin{prop}[Weighted Hardy-Littlewood-Sobolev inequality] \label{DHLSP1}
		Let $s_1, t_1 >1,\, \mu > 0, \alpha > 0, \,0< 2\alpha+\mu < N$ be fixed. Assume also that $h \in L^{s_1}(\mathbb R^N)$. Then  there exists a constant $C(r_1',s_1,\alpha,N,\mu)> 0$ such that
		\begin{equation*}\label{HLSP}
			\left\Vert \displaystyle\int_{\mathbb R^N}\displaystyle\frac{h(y)}{\vert x\vert^{\alpha}\vert y-x\vert^{\mu}\vert y\vert^{\alpha}}dy\right\Vert_{r_1'}\leq C(r_1',s_1,\alpha,N,\mu)\Vert h\Vert_{s_1},
		\end{equation*}
		where 
		\begin{equation*}\label{des2}
			0 < \alpha<\min\left\{\displaystyle\frac{N}{r_1'},\displaystyle\frac{N}{s_1'}\right\}\;\;\hbox{and}\;\;\;\frac{1}{r_1}+\frac{1}{s_1}+\frac{2\alpha+\mu}{N}=2.
		\end{equation*}
	\end{prop}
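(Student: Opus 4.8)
The plan is to obtain Proposition~\ref{DHLSP1} as the dual reformulation of the Stein--Weiss inequality in Proposition~\ref{Stein-Weiss}, so that no new harmonic-analytic estimate is required. Fix $h\in L^{s_1}(\mathbb R^N)$; since the kernel $K(x,y)=|x|^{-\alpha}|x-y|^{-\mu}|y|^{-\alpha}$ is nonnegative, I may assume $h\ge 0$ (otherwise replace $h$ by $|h|$ and use $|\int K(x,y)h(y)\,dy|\le\int K(x,y)|h(y)|\,dy$). Set
\[
g(x):=\int_{\mathbb R^N}\frac{h(y)}{|x|^{\alpha}|x-y|^{\mu}|y|^{\alpha}}\,dy\in[0,+\infty],
\]
which is a nonnegative measurable function by Tonelli's theorem. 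I first note that the hypotheses force $r_1'\in(1,\infty)$: from $\tfrac1{r_1}+\tfrac1{s_1}+\tfrac{2\alpha+\mu}{N}=2$ together with $s_1>1$ and $0<2\alpha+\mu<N$ one gets $0<\tfrac1{r_1}<1$, hence $1<r_1<\infty$ and $1<r_1'<\infty$, so the duality characterisation of the $L^{r_1'}$-norm is available.

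Next I would use that for every nonnegative measurable $g$,
\[
\|g\|_{r_1'}=\sup\Bigl\{\int_{\mathbb R^N}g(x)f(x)\,dx \;:\; f\ge 0 \text{ simple},\ \|f\|_{r_1}\le 1\Bigr\},
\]
an identity valid even when the left-hand side is $+\infty$. For each admissible $f$, Tonelli's theorem gives $\int g f=\int\!\!\int K(x,y)f(x)h(y)\,dx\,dy$, and Proposition~\ref{Stein-Weiss} applies verbatim --- the exponent relation $\tfrac1{r_1}+\tfrac1{s_1}+\tfrac{2\alpha+\mu}{N}=2$ and the restriction $0<\alpha<\min\{N/r_1',N/s_1'\}$ are exactly those imposed here --- so
\[
\int_{\mathbb R^N}g(x)f(x)\,dx\le C(\alpha,\mu,N,s_1,r_1)\,\|f\|_{r_1}\,\|h\|_{s_1}\le C(\alpha,\mu,N,s_1,r_1)\,\|h\|_{s_1}.
\]
Taking the supremum over such $f$ yields $\|g\|_{r_1'}\le C(\alpha,\mu,N,s_1,r_1)\|h\|_{s_1}$, which is precisely the claimed estimate (with $C(r_1',s_1,\alpha,N,\mu)=C(\alpha,\mu,N,s_1,r_1)$); in particular $g\in L^{r_1'}(\mathbb R^N)$ is finite a.e., so the linear operator $h\mapsto g$ is well defined and bounded from $L^{s_1}(\mathbb R^N)$ into $L^{r_1'}(\mathbb R^N)$.

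The only delicate points are measure-theoretic bookkeeping rather than analysis. First, one must check that $g$ is measurable and that the order of integration in $\int\!\!\int K(x,y)f(x)h(y)\,dx\,dy$ may be interchanged; both follow from Tonelli since the integrand is nonnegative. Second, the dual formula for $\|\cdot\|_{r_1'}$ must be applied over nonnegative simple $f$ (equivalently, by monotone approximation) so as to avoid any circular use of the a priori integrability of $g$. Beyond this, nothing is needed that is not already contained in Proposition~\ref{Stein-Weiss}; equivalently, the statement is the classical weighted Hardy--Littlewood--Sobolev inequality established in \cite{Stein_weiss1958}, and one could alternatively cite it directly.
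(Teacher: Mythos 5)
Your duality argument is correct and complete: the weighted Hardy--Littlewood--Sobolev estimate of Proposition~\ref{DHLSP1} is indeed the dual reformulation of the bilinear Stein--Weiss inequality of Proposition~\ref{Stein-Weiss}, and passing from one to the other via the converse H\"older identity $\Vert g\Vert_{r_1'}=\sup\bigl\{\int_{\mathbb R^N} gf\,dx:\ f\ge 0\ \text{simple},\ \Vert f\Vert_{r_1}\le 1\bigr\}$ together with Tonelli's theorem is the standard route. The paper itself offers no proof of this proposition: both Proposition~\ref{Stein-Weiss} and Proposition~\ref{DHLSP1} are quoted as known results from \cite{Stein_weiss1958} (see also \cite{lieb}), so what your argument adds is the explicit observation that the second statement requires no harmonic-analytic input beyond the first. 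Your treatment of the measure-theoretic points (reduction to $h\ge 0$, measurability of $g$ by Tonelli, and monotone approximation by simple functions so that the duality formula is valid before $g$ is known to be finite almost everywhere) is appropriate. One small slip: the exponent relation $1/r_1+1/s_1+(2\alpha+\mu)/N=2$ together with $s_1>1$ and $0<2\alpha+\mu<N$ gives $1/r_1>0$ but does not by itself force $1/r_1<1$ (take $s_1$ large and $2\alpha+\mu$ small); the condition $r_1>1$ must be taken as a hypothesis, exactly as in Proposition~\ref{Stein-Weiss} (the ``$t_1>1$'' in the statement of Proposition~\ref{DHLSP1} is evidently a misprint for ``$r_1>1$''). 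This does not affect the validity of your argument, since without $r_1>1$ the bilinear inequality you invoke would not apply in the first place.
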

	
	Now, we shall prove a powerful result in order to guarantee that Brezis-Lieb Lemma is satisfied for our setting. This kind of result is important for the proof of the functional $J_\lambda$ is in $C^1$ class. Here we follow some ideas discussed in  \cite[Lemma 2.5]{MOROZ} and \cite[Theorem 4.2.7]{WILLEM}. Hence, we consider the following result: 
	
	\begin{lemma}\label{auxiliar Brezis Lieb}
		Suppose $(H_1),(H_2),(H_3)$ where $ b\in L^{2^*m_1/[p(2^*-m_1)]}(\mathbb R^N)$ and $1<m_1<2^*$. Assume also that $u_k\rightharpoonup u$ in $H^{1}(\mathbb R^N)$ as $k\rightarrow \infty$ with $u \in H^1(\mathbb{R}^N)$. Then, for each $p\in [1,m_1]$, we obtain that  
		$$\displaystyle\lim_{n\rightarrow\infty}\displaystyle\int_{\mathbb R^N}b(x)^{\frac{m_1}{p}}\vert\vert u_k\vert^p-\vert u_k-u\vert^p-\vert u\vert^p\vert^{\frac{m_1}{p}}=0.$$
		\begin{proof}
			Let $\varepsilon>0$ be fixed. It is not hard to see that there exists $c(\varepsilon)>0$ such that 
			\begin{equation*}
				\left \vert \vert \theta_1+\theta_2\vert ^p-\vert \theta_1\vert^p-\vert \theta_2\vert^p\right\vert\leq\varepsilon\vert \theta_1\vert^p+c(\varepsilon)\vert \theta_2\vert^p
			\end{equation*}
			holds for $\theta_1,\theta_2 \in \mathbb R$.
			Consider $\theta_1=u_k-u$ and $\theta_2=u$. As a consequence, we obtain that 
			$$
			\left\vert \vert u_k\vert^p-\vert u_k-u\vert^p-\vert u\vert^p\right\vert\leq \varepsilon\vert u_k-u\vert^p+c(\varepsilon)\vert u\vert^p.
			$$
			Under these conditions, we infer that 
			\begin{eqnarray*}
				b^{\frac{m_1}{p}}\left\vert \vert u_k\vert^p-\vert u_k-u\vert^p-\vert u\vert^p\right\vert^{\frac{m_1}{p}}\nonumber & \leq & b^{\frac{m_1}{p}}\left[\varepsilon\vert u_k-u\vert^p+c(\varepsilon)\vert u\vert^p\right]^{\frac{m_1}{p}} \leq  2^{\frac{m_1}{p}} b^{\frac{m_1}{p}}\left[\varepsilon^{\frac{m_1}{p}}\vert u_k-u\vert^{m_1}+c(\varepsilon)^{\frac{m_1}{p}}\vert u\vert^{m_1}\right].
			\end{eqnarray*}
			Now, putting $c_{\varepsilon}=c^{\frac{m_1}{p}}(\varepsilon)$, we also mention that 
			$$
			2^{\frac{m_1}{p}} b^{\frac{m_1}{p}}\left[\varepsilon^{\frac{m_1}{p}}\vert u_k-u\vert^{m_1}+c_{\varepsilon}\vert u\vert^{m_1}\right]-  b^{\frac{m_1}{p}}\left\vert \vert u_k\vert^p-\vert u_k-u\vert^p-\vert u\vert^p\right\vert^{\frac{m_1}{p}}\geq 0.
			$$
			Therefore, by using the Fatou's Lemma, we deduce that
			$$
			2^{\frac{m_1}{p}}\displaystyle\int_{\mathbb R^N}\!\!\!\!b^{\frac{m_1}{p}}(x)c_{\varepsilon}\vert u\vert^{m_1}dx \leq  \liminf_{k\rightarrow \infty} \displaystyle\int_{\mathbb R^N}\!\!\!\!\left\{b^{\frac{m_1}{p}}(x)2^{\frac{m_1}{p}}\left[\varepsilon^{\frac{m_1}{p}}\vert u_k-u\vert^{m_1}+c_{\varepsilon}\vert u\vert^{m_1}\right] - b^{\frac{m_1}{p}}\left \vert \vert u_k\vert^p-\vert u_k-u\vert^p-\vert u\vert^p\right\vert^{\frac{m_1}{p}}\right\}dx.
			$$
			Hence, we obtain the following estimates
			\begin{eqnarray*}
				\nonumber 0 & \leq & \liminf_{k\rightarrow \infty} \displaystyle\int_{\mathbb R^N}\left\{b^{\frac{m_1}{p}}(x)2^{\frac{m_1}{p}}\varepsilon^{\frac{m_1}{p}}\vert u_k-u\vert^{m_1}-b^{\frac{m_1}{p}}\left \vert \vert u_k\vert^p-\vert u_k-u\vert^p-\vert u\vert^p\right\vert^{\frac{m_1}{p}}\right\}dx\\
				\nonumber & \leq & 2^{\frac{m_1}{p}}\varepsilon^{\frac{m_1}{p}}\liminf_{k\rightarrow \infty} \displaystyle\int_{\mathbb R^N}b^{\frac{m_1}{p}}(x)\vert u_k-u\vert^{m_1}-\limsup_{k\rightarrow \infty} \displaystyle\int_{\mathbb R^N} b^{\frac{m_1}{p}}\left \vert \vert u_k\vert^p-\vert u_k-u\vert^p-\vert u\vert^p\right\vert^{\frac{m_1}{p}}dx.
			\end{eqnarray*}
			Therefore, by using the H\"older inequality and Remark \ref{imersaoRN}, we infer that
			\begin{eqnarray}\label{esquema}
				\limsup_{k\rightarrow \infty} \displaystyle\int_{\mathbb R^N} b^{\frac{m_1}{p}}\left \vert \vert u_k\vert^p-\vert u_k-u\vert^p-\vert u\vert^p\right\vert^{\frac{m_1}{p}}dx  &\leq& 2^{\frac{m_1}{p}}\varepsilon^{\frac{m_1}{p}} \liminf_{k\rightarrow \infty} \displaystyle\int_{\mathbb R^N}b^{\frac{m_1}{p}}(x)\vert u_k-u\vert^{m_1} \nonumber \\
				& = & 2^{\frac{m_1}{p}}\varepsilon^{\frac{m_1}{p}}\liminf_{k\rightarrow \infty} \Vert b\Vert_{m_1 p/l}^{\frac{m_1}{p}}\Vert u_k-u\Vert^{m_1}_{2^*}\nonumber\\
				& \leq &  2^{\frac{m_1}{p}}\varepsilon^{\frac{m_1}{p}}\liminf_{k\rightarrow \infty} \Vert b\Vert^{\frac{m_1}{p}}_{\frac{m_1}{p}l} S_{2^*}^{m_1}\Vert u_k-u\Vert^{m_1}.
			\end{eqnarray}
			It is important to recall that 
			$l= 2^*/(2^*-m_1)$ and $b\in L^{m_1l/p}(\mathbb R^N)$. Here was used the fact that
			$$
			\displaystyle\frac{m_1 l}{p}=\displaystyle\frac{2^*m_1}{p(2^*-m_1)}.
			$$
			Hence, taking the limit $\varepsilon\to 0$ in \eqref{esquema}, we obtain the desired result. This ends the proof. 
		\end{proof}
	\end{lemma}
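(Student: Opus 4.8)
The plan is to run the classical Brezis--Lieb splitting argument, with the weight $b^{m_1/p}$ absorbed into a H\"older estimate whose exponents are exactly those prescribed by $(H_3)$, and with a Fatou trick replacing dominated convergence.

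\emph{Step 1 (elementary pointwise inequality).} I would start from the fact that for every $\varepsilon>0$ there is $c(\varepsilon)>0$ with $\big|\,|\theta_1+\theta_2|^p-|\theta_1|^p-|\theta_2|^p\,\big|\le \varepsilon|\theta_1|^p+c(\varepsilon)|\theta_2|^p$ for all $\theta_1,\theta_2\in\mathbb R$, a consequence of the continuity and homogeneity of $t\mapsto |1+t|^p-1-|t|^p$ (here $p\ge 1$ is used). Taking $\theta_1=u_k-u$ and $\theta_2=u$, raising to the power $m_1/p\ge 1$, using $(s+t)^{m_1/p}\le 2^{m_1/p}(s^{m_1/p}+t^{m_1/p})$, and multiplying by $b^{m_1/p}$ gives the pointwise domination
$$b^{\frac{m_1}{p}}\big|\,|u_k|^p-|u_k-u|^p-|u|^p\,\big|^{\frac{m_1}{p}}\le 2^{\frac{m_1}{p}}b^{\frac{m_1}{p}}\Big(\varepsilon^{\frac{m_1}{p}}|u_k-u|^{m_1}+c(\varepsilon)^{\frac{m_1}{p}}|u|^{m_1}\Big).$$

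\emph{Step 2 (integrability via H\"older).} With $l=2^*/(2^*-m_1)$, I would apply H\"older with exponents $l$ and $2^*/m_1$ (whose reciprocals sum to $1$) to obtain, for $w\in H^1(\mathbb R^N)$,
$$\int_{\mathbb R^N}b^{\frac{m_1}{p}}|w|^{m_1}\le \|b\|_{m_1 l/p}^{\frac{m_1}{p}}\,\|w\|_{2^*}^{m_1}\le \|b\|_{m_1 l/p}^{\frac{m_1}{p}}\,S_{2^*}^{m_1}\,\|w\|^{m_1},$$
where $m_1 l/p=2^*m_1/[p(2^*-m_1)]$ is precisely the exponent for which $b\in L^{m_1 l/p}(\mathbb R^N)$ is assumed. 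In particular $b^{m_1/p}|u|^{m_1}\in L^1(\mathbb R^N)$, and since $u_k\rightharpoonup u$ the sequence $\{\|u_k-u\|\}$ is bounded, so $C:=2^{m_1/p}\sup_k\int_{\mathbb R^N}b^{m_1/p}|u_k-u|^{m_1}<\infty$.

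\emph{Step 3 (Fatou trick and conclusion).} Passing to a subsequence, $u_k\to u$ a.e.\ in $\mathbb R^N$ (Rellich--Kondrachov on each ball $B(0,R)$ together with a diagonal argument), hence $|u_k-u|^{m_1}\to 0$ and $\big|\,|u_k|^p-|u_k-u|^p-|u|^p\,\big|^{m_1/p}\to 0$ a.e. Writing $c_\varepsilon:=c(\varepsilon)^{m_1/p}$, the function
$$G_{k,\varepsilon}:=2^{\frac{m_1}{p}}b^{\frac{m_1}{p}}\Big(\varepsilon^{\frac{m_1}{p}}|u_k-u|^{m_1}+c_\varepsilon|u|^{m_1}\Big)-b^{\frac{m_1}{p}}\big|\,|u_k|^p-|u_k-u|^p-|u|^p\,\big|^{\frac{m_1}{p}}$$
is $\ge 0$ by Step 1, so Fatou's lemma applied to $G_{k,\varepsilon}$ gives $2^{m_1/p}c_\varepsilon\int_{\mathbb R^N} b^{m_1/p}|u|^{m_1}\le \liminf_k\int_{\mathbb R^N} G_{k,\varepsilon}$; subtracting this finite quantity yields
$$\limsup_{k\to\infty}\int_{\mathbb R^N}b^{\frac{m_1}{p}}\big|\,|u_k|^p-|u_k-u|^p-|u|^p\,\big|^{\frac{m_1}{p}}\le 2^{\frac{m_1}{p}}\varepsilon^{\frac{m_1}{p}}\sup_{k}\int_{\mathbb R^N}b^{\frac{m_1}{p}}|u_k-u|^{m_1}= C\,\varepsilon^{\frac{m_1}{p}}.$$
Letting $\varepsilon\to 0$ gives the claim along the subsequence; since every subsequence of $\{u_k\}$ admits a further subsequence along which the conclusion holds and the quantity in question is nonnegative, the full limit is $0$.

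\emph{Expected main obstacle.} The delicate point is that there is no fixed integrable majorant for $b^{m_1/p}|u_k-u|^{m_1}$ permitting a direct dominated-convergence argument; the domination in Step 1 only holds up to a loss of a factor $\varepsilon$. The Fatou trick is precisely what converts the nonnegativity of $G_{k,\varepsilon}$ into the $\varepsilon$-dependent estimate, and its success hinges on the exponent bookkeeping of $(H_3)$: the hypothesis $b\in L^{2^*m_1/[p(2^*-m_1)]}(\mathbb R^N)$ is exactly what makes $b^{m_1/p}|u|^{m_1}$ and $b^{m_1/p}|u_k-u|^{m_1}$ integrable by H\"older against $L^{2^*}$, so that the subtracted term in the Fatou inequality is finite and the passage $\varepsilon\to 0$ is legitimate.
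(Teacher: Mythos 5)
Your proof is correct and follows essentially the same route as the paper's: the elementary inequality $\bigl|\,|\theta_1+\theta_2|^p-|\theta_1|^p-|\theta_2|^p\,\bigr|\le\varepsilon|\theta_1|^p+c(\varepsilon)|\theta_2|^p$, the H\"older estimate with exponents $l=2^*/(2^*-m_1)$ and $2^*/m_1$ matching the integrability assumption on $b$, and the Fatou argument on the nonnegative function $G_{k,\varepsilon}$ followed by $\varepsilon\to 0$. Your explicit treatment of the a.e.\ convergence of $u_k$ (via Rellich--Kondrachov and a diagonal argument) and of the subsequence principle at the end is a welcome addition of detail that the paper leaves implicit.
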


	Now, we shall prove identity of Brezis-Lieb for our main problem taking into account the potentials $a,b : \mathbb{R}^N \to \mathbb{R}$. Here we borrow some ideas discussed in  \cite[Lemma 2.5]{MOROZ} and \cite[Theorem 4.2.7]{WILLEM}. Namely, we prove the following result:
	
	\begin{prop}[The Brezis-Lieb Lemma]\label{Brezis-Lieb}
		Suppose $(H_1),(H_2), (H_3)$. Assume also that $u_k\rightharpoonup u$ in $H^{1}(\mathbb R^N)$ where $u \in H^1(\mathbb{R}^N)$. Then we obtain the following assertion:
		$$
		\displaystyle\int_{\mathbb R^N}\int_{\mathbb R^N}\frac{b(x)\vert u_k(x) \vert^p b(y)\vert u_k(y)\vert^p}{\vert x\vert^\alpha\vert x-y\vert^{\mu}\vert y\vert^{\alpha}}dxdy-\displaystyle\int_{\mathbb R^N}\int_{\mathbb R^N}\frac{b(x)\vert (u_k-u)(x) \vert^p b(y)\vert (u_k-u)(y)\vert^p}{\vert x\vert^\alpha\vert x-y\vert^{\mu}\vert y\vert^{\alpha}}dxdy
		$$
		$$
		\rightarrow\displaystyle\int_{\mathbb R^N}\int_{\mathbb R^N}\frac{b(x)\vert u(x) \vert^pb(y)\vert u(y)\vert^p}{\vert x\vert^\alpha\vert x-y\vert^{\mu}\vert y\vert^{\alpha}}dxdy.
		$$
	\end{prop}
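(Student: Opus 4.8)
The plan is to reduce the nonlocal Brezis--Lieb identity to the scalar statement of Lemma~\ref{auxiliar Brezis Lieb} together with the bilinearity and boundedness of the Stein--Weiss quadratic form, isolating one genuinely non-vanishing cross term which will be killed by a weak-convergence argument. Throughout, set $v_k := u_k - u$, so that $v_k \rightharpoonup 0$ in $H^1(\mathbb R^N)$ and $\{u_k\},\{v_k\}$ are bounded in $H^1(\mathbb R^N)$, and introduce the symmetric bilinear form
$$\mathcal B(f,g) := \int_{\mathbb R^N}\int_{\mathbb R^N}\frac{f(x)\,g(y)}{|x|^\alpha|x-y|^\mu|y|^\alpha}\,dx\,dy , \qquad B(w) = \mathcal B\big(b|w|^p,\,b|w|^p\big).$$
Choosing $r_1 := 2N/(2N-2\alpha-\mu)$ one has $\tfrac1{r_1}+\tfrac1{r_1}+\tfrac{2\alpha+\mu}{N}=2$ and $0<\alpha<N/r_1'$, so Proposition~\ref{Stein-Weiss} (with $s_1=r_1$) gives $|\mathcal B(f,g)|\le C\|f\|_{r_1}\|g\|_{r_1}$; moreover a Hölder inequality with the exponents dictated by $(H_3)$ (using $b\in L^\sigma$ with $\sigma = 2^*/(2_{\alpha,\mu}^*-p)$, and $p\,r_1\cdot\tfrac{\sigma}{\sigma-r_1}=2^*$) together with Remark~\ref{imersaoRN} yields $\|b|w|^p\|_{r_1}\le \|b\|_\sigma^{p}\|w\|_{2^*}^{p}\le C\|b\|_\sigma^p\|w\|^p$ for all $w\in H^1(\mathbb R^N)$. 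In particular $\phi_k:=b|u_k|^p$ and $\psi_k:=b|v_k|^p$ are bounded in $L^{r_1}(\mathbb R^N)$ and every integral below is finite.

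Writing $\phi_k=\psi_k+w_k$ with $w_k:=b\big(|u_k|^p-|v_k|^p\big)$ and using bilinearity and symmetry of $\mathcal B$,
$$B(u_k)-B(v_k)=\mathcal B(\phi_k,\phi_k)-\mathcal B(\psi_k,\psi_k)=2\,\mathcal B(\psi_k,w_k)+\mathcal B(w_k,w_k).$$
I would then apply Lemma~\ref{auxiliar Brezis Lieb} with $m_1:=p\,r_1$: one checks $1<m_1<2^*$ from $2_{\alpha,\mu}<p<2_{\alpha,\mu}^*$, that $p\le m_1$, and that the required integrability $b\in L^{2^*m_1/[p(2^*-m_1)]}$ is precisely $b\in L^\sigma$, which holds by $(H_3)$. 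Since $m_1/p=r_1$, the lemma gives $\int_{\mathbb R^N} b^{r_1}\big||u_k|^p-|v_k|^p-|u|^p\big|^{r_1}\to 0$, that is $w_k\to b|u|^p$ strongly in $L^{r_1}(\mathbb R^N)$. Boundedness of $\mathcal B$ then forces $\mathcal B(w_k,w_k)\to\mathcal B(b|u|^p,b|u|^p)=B(u)$, and, since $\{\psi_k\}$ is bounded in $L^{r_1}$, also $\mathcal B(\psi_k,w_k)=\mathcal B(\psi_k,b|u|^p)+o(1)$.

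It remains to show $\mathcal B(\psi_k,b|u|^p)=\mathcal B\big(b|v_k|^p,b|u|^p\big)\to 0$, and this is the delicate point, since $\psi_k=b|v_k|^p$ is \emph{not} strongly null. Rewrite $\mathcal B(\psi_k,b|u|^p)=\int_{\mathbb R^N}\psi_k(x)\,g(x)\,dx$ with the \emph{fixed} function $g(x):=\int_{\mathbb R^N}\frac{b(y)|u(y)|^p}{|x|^\alpha|x-y|^\mu|y|^\alpha}\,dy$, which belongs to $L^{r_1'}(\mathbb R^N)$ by the weighted Hardy--Littlewood--Sobolev inequality of Proposition~\ref{DHLSP1} (again with $s_1=r_1$). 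Given any subsequence, the boundedness of $\{v_k\}$ in $H^1(\mathbb R^N)$, the compact embeddings $H^1(B(0,R))\hookrightarrow\hookrightarrow L^2(B(0,R))$ from Proposition~\ref{imersaomega}, and a diagonal argument produce a further subsequence along which $v_k\to 0$ a.e. in $\mathbb R^N$, hence $\psi_k\to 0$ a.e.; being bounded in the reflexive space $L^{r_1}(\mathbb R^N)$ (note $1<r_1<2$) it then converges weakly to $0$ there, so $\int_{\mathbb R^N}\psi_k\,g\to 0$ along that subsequence. Since the candidate limit $0$ does not depend on the subsequence, the full sequence satisfies $\mathcal B(\psi_k,b|u|^p)\to 0$, and combining the three contributions yields $B(u_k)-B(v_k)\to B(u)$, as claimed.

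The main obstacle is precisely the cross term $\mathcal B(b|v_k|^p,b|u|^p)$: in the diagonal term $\mathcal B(w_k,w_k)$ one disposes of strong $L^{r_1}$ convergence coming from Lemma~\ref{auxiliar Brezis Lieb}, whereas here one must recognize that $b|v_k|^p$ is weakly null in $L^{r_1}$ — which follows from a.e. convergence along a subsequence plus the uniform $L^{r_1}$ bound — and pair it against the fixed $L^{r_1'}$ potential furnished by the weighted Hardy--Littlewood--Sobolev estimate. The accompanying bookkeeping — verifying that the exponents $r_1,\sigma,m_1$ make Propositions~\ref{Stein-Weiss} and \ref{DHLSP1} and Lemma~\ref{auxiliar Brezis Lieb} simultaneously applicable — is routine but indispensable.
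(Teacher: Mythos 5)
Your proposal is correct, and while the opening algebraic step coincides with the paper's (both write $B(u_k)-B(u_k-u)=\mathcal B(w_k,w_k)+2\,\mathcal B(w_k,\psi_k)$ with $w_k=b(|u_k|^p-|u_k-u|^p)$ and $\psi_k=b|u_k-u|^p$, and both dispose of the diagonal term by invoking Lemma \ref{auxiliar Brezis Lieb} with $m_1=pr_1$, $m_1/p=r_1=2N/(2N-2\alpha-\mu)$, so that $w_k\to b|u|^p$ strongly in $L^{r_1}$), you treat the cross term in a genuinely different way. The paper estimates $|\mathcal B(w_k,\psi_k)|\le C\|w_k\|_{r_1}\|\psi_k\|_{r_1}$ and then proves the \emph{strong} convergence $\|\psi_k\|_{r_1}\to 0$, by splitting $\mathbb R^N$ into $B(0,R)$ and its complement, using the compact embedding into $L^{2^*-\epsilon}(B(0,R))$ on the ball and the smallness of the tail of $b$; this is exactly where the extra integrability $b\in L^{\sigma+\eta}$ from $(H_3)$ is consumed. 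You instead observe that $\psi_k$ is merely \emph{weakly} null in $L^{r_1}$ (bounded plus a.e.\ convergent along subsequences, $1<r_1<2$), replace $w_k$ by its strong limit $b|u|^p$ up to $o(1)$, and pair $\psi_k$ against the fixed potential $g=\int b|u|^p/(|x|^\alpha|x-y|^\mu|y|^\alpha)\,dy\in L^{r_1'}$ furnished by Proposition \ref{DHLSP1}. This buys you two things: you only need $b\in L^\sigma$ (no $\eta$-perturbation and no $2^*-\epsilon$ device), and the subsequence/weak-convergence argument is cleaner than the two-parameter $R\to\infty$, $k\to\infty$ limit interchange in the paper. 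The only blemish is cosmetic: in your Hölder estimate the bound should read $\|b|w|^p\|_{r_1}\le\|b\|_\sigma\|w\|_{2^*}^p$ (the exponent on $\|b\|_\sigma$ is $1$, not $p$), which does not affect the argument.
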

	\begin{proof}
		It is not hard to see that 
		\begin{eqnarray*}
			\nonumber \displaystyle\int_{\mathbb R^N}\int_{\mathbb R^N}\frac{b(x)\vert u_k(x) \vert^pb(y)\vert u_k(y)\vert^p}{\vert x\vert^\alpha\vert x-y\vert^{\mu}\vert y\vert^{\alpha}}dxdy &-& \displaystyle\int_{\mathbb R^N}\int_{\mathbb R^N}\frac{b(x)\vert (u_k-u)(x) \vert^pb(y)\vert (u_k-u)(y)\vert^p}{\vert x\vert^\alpha\vert x-y\vert^{\mu}\vert y\vert^{\alpha}}dxdy\\
			& = &\displaystyle\int_{\mathbb R^N}\left[\int_{\mathbb R^N}\frac{b(y)\left(\vert u_k\vert^p-\vert u_k-u\vert^p\right)}{\vert x\vert^\alpha\vert x-y\vert^{\mu}\vert y\vert^{\alpha}}dy\right] b(x)\left[\vert u_k\vert^p-\vert u_k-u\vert^p\right]dx\\
			&+& 2\displaystyle\int_{\mathbb R^N}\left[\int_{\mathbb R^N}\frac{b(y)\left(\vert u_k\vert^p-\vert u_k-u\vert^p\right)}{\vert x\vert^\alpha\vert x-y\vert^{\mu}\vert y\vert^{\alpha}}dy\right] b(x)\vert u_k-u\vert^p dx. 
		\end{eqnarray*}
		Now, we consider 
		\begin{equation}\label{I}
			I=\displaystyle\int_{\mathbb R^N}\left[\int_{\mathbb R^N}\frac{b(y)\left(\vert u_k\vert^p-\vert u_k-u\vert^p\right)}{\vert x\vert^\alpha\vert x-y\vert^{\mu}\vert y\vert^{\alpha}}dy\right] b(x)\left[\vert u_k\vert^p-\vert u_k-u\vert^p\right]dx.
		\end{equation}
		\begin{equation}\label{IIai}
			II=\displaystyle\int_{\mathbb R^N}\left[\int_{\mathbb R^N}\frac{b(y)\left(\vert u_k\vert^p-\vert u_k-u\vert^p\right)}{\vert x\vert^\alpha\vert x-y\vert^{\mu}\vert y\vert^{\alpha}}dy\right] b(x)\vert u_k-u\vert^p dx.
		\end{equation}
		Hence, we rewrite \eqref{I} in the following form
		\begin{eqnarray}\label{des.Isaco}
			I \nonumber& = & \displaystyle\int_{\mathbb R^N}\left[\int_{\mathbb R^N}\frac{b(y)\left(\vert u_k\vert^p-\vert u_k-u\vert^p\right)}{\vert x\vert^\alpha\vert x-y\vert^{\mu}\vert y\vert^{\alpha}}dy\right] b(x)\left[\vert u_k\vert^p-\vert u_k-u\vert^p\right]dx\\
			\nonumber& = &  \underbrace{\displaystyle\int_{\mathbb R^N} \left[\displaystyle\int_{\mathbb R^N}\displaystyle\frac{b(y)(\vert u_k\vert^p-\vert u_k-u\vert^p-\vert u\vert^p)}{\vert x\vert^{\alpha}\vert x-y \vert^{\mu}\vert y\vert^{\alpha}}dy\right]b(x)\left[\vert u_k\vert^p-\vert u_k-u\vert^p-\vert u\vert^p\right]dx}_{A}\\
			\nonumber &+& \underbrace{\displaystyle\int_{\mathbb R^N} \left[\displaystyle\int_{\mathbb R^N}\displaystyle\frac{b(y)(\vert u_k\vert^p-\vert u_k-u\vert^p-\vert u\vert^p)}{\vert x\vert^{\alpha}\vert x-y \vert^{\mu}\vert y\vert^{\alpha}}dy\right]b(x)\vert u\vert^p dx}_{B}\\
			\nonumber &+& \underbrace{\displaystyle\int_{\mathbb R^N} \left[\displaystyle\int_{\mathbb R^N}\displaystyle\frac{b(y)\vert u\vert^p}{\vert x\vert^{\alpha}\vert x-y \vert^{\mu}\vert y\vert^{\alpha}}dy\right]b(x)\left[\vert u_k\vert^p-\vert u_k-u\vert^p-\vert u\vert^p\right]dx}_{C}\\
			&+& \displaystyle\int_{\mathbb R^N} \left[\displaystyle\int_{\mathbb R^N}\displaystyle\frac{b(y)\vert u\vert^p}{\vert x\vert^{\alpha}\vert x-y \vert^{\mu}\vert y\vert^{\alpha}}dy\right]b(x)\vert u\vert^p dx. 
		\end{eqnarray}
		From now on, we shall analyze the terms $A, B$ and $C$ which are given by \eqref{des.Isaco}. Firstly, by using H\"older inequality together with Proposition \ref{DHLSP1} and Lemma \ref{auxiliar Brezis Lieb}, we obtain that
		\begin{eqnarray*}\label{A}
			A\nonumber &=&  \displaystyle\int_{\mathbb R^N} \left[\displaystyle\int_{\mathbb R^N}\displaystyle\frac{b(\vert u_k\vert^p-\vert u_k-u\vert^p-\vert u\vert^p)}{\vert x\vert^{\alpha}\vert x-y \vert^{\mu}\vert y\vert^{\alpha}}dy\right]b(x)\left[\vert u_k\vert^p-\vert u_k-u\vert^p-\vert u\vert^p\right]dx\\
			& \leq & \left\Vert \int_{\mathbb R^N}\frac{b(\vert u_k\vert^p-\vert u_k-u\vert^p-\vert u\vert^p)}{\vert x\vert^{\alpha}\vert x-y \vert^{\mu}\vert y\vert^{\alpha}}dy\right\Vert_{s'_1}\Vert b \left(\vert u_k\vert^p-\vert u_k-u\vert^p-\vert u\vert^p\right)\Vert_{s_1}\nonumber\\
			& \leq & C_1\left\Vert b \left(\vert u_k\vert^p-\vert u_k-u\vert^p-\vert u\vert^p\right)\Vert_{s_1}\right\Vert b \left(\vert u_k\vert^p-\vert u_k-u\vert^p-\vert u\vert^p\right)\Vert_{s_1}\displaystyle\rightarrow 0.
		\end{eqnarray*}
		where $s_1 = m_1/p$, $s_1' = m_1/(m_1 - p)$. In particular, we consider $m_1 = 2 N p/(2 N - 2 \alpha - \mu)$.
		
		Now, for the term $B$ given in \eqref{des.Isaco}. In order to do that we apply the Proposition \ref{DHLSP1} and H\"older inequality which imply that
		\begin{eqnarray}\label{ali}
			B& = &\displaystyle\int_{\mathbb R^N} \left[\displaystyle\int_{\mathbb R^N}\displaystyle\frac{b(y)(\vert u_k\vert^p-\vert u_k-u\vert^p-\vert u\vert^p)}{\vert x\vert^{\alpha}\vert x-y \vert^{\mu}\vert y\vert^{\alpha}}dy\right]b(x)\vert u\vert^p dx \nonumber \\
			& \leq & C_1\left \Vert b [\vert u_k\vert^p-\vert u_k-u\vert^p-\vert u\vert^p] \vert\right\Vert_{s_1}\left(\displaystyle\int_{\mathbb R^N}b^{l_1}\vert u(x)\vert^{pl_1}dx\right)^{\frac{1}{l_1}}
		\end{eqnarray} 
		where
		$
		s_1= 2N/(2N-2\alpha-\mu).
		$
		Once again, we obtain that
		$l_1= s_1$ and $m_1 = 2 Np/(2N - 2 \alpha - \mu)$.
		Now, by using H\"older inequality and the Remark \ref{imersaoRN}, we deduce that
		\begin{eqnarray}\label{B1}
			\nonumber&&\displaystyle\int_{\mathbb R^N}b^{l_1}\vert u\vert^{pl_1} dx \leq\left(\displaystyle\int_{\mathbb R^N}b^{l_1l_2} dx\right)^\frac{1}{l_2}\left(\displaystyle\int_{\mathbb R^N}\vert u\vert^{pl_1\frac{2^*-\epsilon}{pl_1}} dx\right)^\frac{pl_1}{2^*-\epsilon}
			= \Vert b\Vert^{l_1}_{l_1l_2} \|u \|_{2^*-\epsilon}^{pl_1}\leq S_{2^*-\epsilon}^{pl_1}\Vert b\Vert^{l_1}_{l_1l_2}\Vert u\Vert^{pl_1}<\infty
		\end{eqnarray}
		where $\epsilon > 0$ is small enough. Here was used the fact that $b\in L^{l_1l_2}(\mathbb R^N)$.  In fact, we mention that $2^*/pl_1>1$ and  
		$$l_2=\displaystyle\frac{[2N - \epsilon(N-2)][2N-2\alpha-\mu]}{2N[2N-2\alpha-\mu-p(N-2)] - \epsilon(N-2)(2N - 2 \alpha - \mu)}.$$
		As a consequence, we obtain that 
		$$l_1l_2=\displaystyle\frac{2N[2N - \epsilon(N-2)]}{2N[2N-2\alpha-\mu-p(N-2)]- \epsilon(N-2)(2N - 2 \alpha - \mu)} = \sigma + \eta$$
		where $\sigma = 2N/(2N - 2 \alpha - \mu - p (N -2))$ and $\eta > 0$ is small enough.
		Therefore, by using \eqref{ali} and taking into account the last estimates, we obtain that 
		\begin{equation}\label{final de b}
			B \leq C \Vert b [\vert u_k\vert^p-\vert u_k-u\vert^p-\vert u\vert^p]\Vert_{s_1}\Vert b\Vert^{l_1}_{l_1l_2}\Vert u \Vert^{pl_1}\rightarrow 0.
		\end{equation}
		Furthermore, by applying Proposition \ref{Stein-Weiss}, we infer that 
		\begin{eqnarray}\label{C}  
			C\nonumber &=&\displaystyle\int_{\mathbb R^N} \left[\displaystyle\int_{\mathbb R^N}\displaystyle\frac{b(y)\vert u\vert^p}{\vert x\vert^{\alpha}\vert x-y \vert^{\mu}\vert y\vert^{\alpha}}dy\right]b(x)\left[\vert u_k\vert^p-\vert u_k-u\vert^p-\vert u\vert^p\right]dx\\
			&\leq & C_2\left\Vert b \vert u \vert^p\right\Vert_{l_1}\left\Vert b [ \vert u_k\vert^p-\vert u_k-u\vert^p-\vert u\vert^p]\vert\right\Vert_{s_1}.
		\end{eqnarray}
		Once again by using the H\"older inequality and Proposition \ref{imersaoRN}, we mention that
		\begin{eqnarray}\label{b-u}
			\Vert b\vert u\vert^p\Vert^{l_1}_{l_1}&\leq& \left(\displaystyle\int_{\mathbb R^N}\vert b\vert^{l_1l_2}dx\right)^{\frac{1}{l_2}}\left(\displaystyle\int_{\mathbb R^N}\vert u\vert^{pl_1\frac{2^*- \epsilon}{pl_1}}dx\right)^{\frac{pl_1}{2^*- \epsilon}}
			=\Vert b\Vert^{l_1}_{l_1l_2}\Vert u\Vert^{pl_1}_{2^*-\epsilon}\leq \Vert b\Vert^{l_1}_{l_1l_2}\Vert u \Vert^{pl_1}<\infty.
		\end{eqnarray}
		Here we emphasize that $ b\in L^{l_1l_2}(\mathbb R^N)$ where
		$l_2=\left((2^* - \epsilon)/pl_1\right)'$ and $2^*/pl_1>1$.   
		Therefore, by using \eqref{C}, we infer that 
		\begin{eqnarray}\label{final de C}
			C\leq C_2\left\Vert b\vert u \vert^p\right\Vert_{l_1}\left\Vert b [\vert u_k\vert^p-\vert u_k-u\vert^p-\vert u\vert^p]\right\Vert_{s_1}\rightarrow 0.    
		\end{eqnarray}
		Under these conditions, by using \eqref{final de b}, \eqref{final de C} and \eqref{des.Isaco}, we obtain that 
		$$I\to \displaystyle\int_{\mathbb R^N} \left[\displaystyle\int_{\mathbb R^N}\displaystyle\frac{b(y)\vert u\vert^p}{\vert x\vert^{\alpha}\vert x-y \vert^{\mu}\vert y\vert^{\alpha}}dy\right]b(x)\vert u\vert^p dx.$$
		
		At this stage, we shall analyze the term II given in \eqref{IIai}. According to Proposition \ref{Stein-Weiss} we deduce that 
		\begin{eqnarray}\label{II}
			II \nonumber& = &\displaystyle\int_{\mathbb R^N}\left[\int_{\mathbb R^N}\frac{b(y)\left(\vert u_k\vert^p-\vert u_k-u\vert^p\right)}{\vert x\vert^\alpha\vert x-y\vert^{\mu}\vert y\vert^{\alpha}}dy\right] b(x)\vert u_k-u\vert^p dx\\
			\nonumber & \leq &  C\left\Vert b\left(\vert u_k\vert^p-\vert u_k-u\vert^p\right)\right\Vert_{s_1}\left\Vert b\vert u_k-u\vert^p\right\Vert_{l_1}\\
			\nonumber & = & C\left\Vert b\left(\vert u_k\vert^p-\vert u_k-u\vert^p-\vert u\vert^p+\vert u\vert^p\right)\right\Vert_{s_1}\left\Vert b\vert u_k-u\vert^p\right\Vert_{l_1}\\
			& \leq & C\left\Vert b\left(\vert u_k\vert^p-\vert u_k-u\vert^p-\vert u\vert^p\right)\right\Vert_{s_1}\left\Vert b\vert u_k-u\vert^p\right\Vert_{l_1} + C\left\Vert b\vert u\vert^p\right\Vert_{s_1}\left\Vert b\vert u_k-u\vert^p\right\Vert_{l_1}.
		\end{eqnarray}
		Recall also that $u_k\rightharpoonup u$ in $H^1(\mathbb R ^N)$. Then there exists $ h_l\in L^{l}(B(0,R)),\;\; 1\leq l< 2^*$ such that
		$u_k \rightarrow u$ in $L^{l}(B(0,R))$,
		$u_k(x) \rightarrow u(x)$ a. e. in  $B(0,R)$ and 
		$|u_k| \le h_l$ in $\mathbb{R}^N$.
		In light of H\"older inequality and Proposition \ref{imersaomega}, we deduce also that 
		\begin{eqnarray}\label{u_k-u}
			\nonumber \Vert b \vert u_k-u\vert^p\Vert_{l_1}^{l_1}  &=&  \displaystyle \int_{B(0,R)}b^{l_1}(x)\vert u_k-u\vert^{pl_1}dx+\displaystyle\int_{\mathbb R^N\backslash B(0,R)}b^{l_1}(x)\vert u_k-u\vert^{pl_1}dx\\
			\nonumber& \leq & \Vert b\Vert^{l_1}_{l_1l_2}\left(\displaystyle\int_{B(0,R)}\vert u_k-u\vert^{pl_1\frac{2^*-\epsilon}{pl_1}}dx\right)^{\frac{pl_1}{2^*-\epsilon}}+c\Vert u_k-u\Vert^{pl_1}\left(\displaystyle\int_{\mathbb R^N\backslash B(0,R)}b^{l_1l_2}(x)dx\right)^{\frac{1}{l_2}} \nonumber \\
			&\leq& \Vert b\Vert^{l_1}_{l_1l_2}\left(\displaystyle\int_{B(0,R)}\vert u_k-u\vert^{2^*-\epsilon}dx\right)^{\frac{pl_1}{2^*-\epsilon}}+ C \left(\displaystyle\int_{\mathbb R^N\backslash B(0,R)}b^{l_1l_2}(x)dx\right)^{\frac{1}{l_2}}. 
		\end{eqnarray}
		Notice also that $b\in L^{l_1l_2}(\mathbb R^N)$. In particular,  we obtain that
		$\left\vert b^{l_1l_2}(x)\chi_{\mathbb R^N \backslash B(0,R)}(x)\right\vert\leq b^{l_1l_2}\in L^1(\mathbb R^N).$
		The last estimate allows us to apply the Dominated Convergence Theorem proving that 
		\begin{eqnarray}\label{B11}
			\int_{\mathbb R^N\backslash B(0,R)}b^{l_1l_2}(x)dx \rightarrow 0, \,\, \mbox{as} \,\,  R \to \infty.
		\end{eqnarray}
		On the other hand, for each $\epsilon > 0$ small enough and taking into account Proposition \ref{imersaomega}, we obtain also that 
		\begin{equation}\label{bai}
			\displaystyle\int_{B(0,R)}\vert u_k-u\vert^{2^*-\epsilon}dx \to 0, \,\, k \to \infty.
		\end{equation}
		Therefore,  by using \eqref{u_k-u}, \eqref{B11} and \eqref{bai}, we obtain that $\Vert b \vert u_k-u\vert^p\Vert_{l_1}^{l_1} \to 0$ as $k \to \infty$.
		In order to estimate II,  we apply \eqref{b-u} and $s_1=l_1$ in order to conclude that
		$\Vert b\vert u\vert^p\Vert^{s_1}_{s_1} < \infty$. 
		In view of \eqref{II} and the estimates just above we conclude that 
		$II \rightarrow 0$ as $k \to \infty$. As a consequence, we infer that  
		\begin{eqnarray}
			\nonumber&& \displaystyle\int_{\mathbb R^N}\left[\int_{\mathbb R^N}\frac{b(y)\left(\vert u_k\vert^p-\vert u_k-u\vert^p\right)}{\vert x\vert^\alpha\vert x-y\vert^{\mu}\vert y\vert^{\alpha}}dy\right] b(x)\left[\vert u_k\vert^p-\vert u_k-u\vert^p\right]dx \rightarrow  \displaystyle\int_{\mathbb R^N} \left[\displaystyle\int_{\mathbb R^N}\displaystyle\frac{b(y)\vert u\vert^p}{\vert x\vert^{\alpha}\vert x-y \vert^{\mu}\vert y\vert^{\alpha}}\right]dyb(x)\vert u\vert^p dx. 
		\end{eqnarray}
		This ends the proof. 
	\end{proof}
	
	\section{The nonlinear Rayleigh quotient}
	In this section we shall consider the nonlinear Rayleigh quotient for our main problem. The main idea here is to find the largest positive number $\lambda^* > 0$ such that the Nehari method can be applied. Consider the functionals $R_n,~R_e : H^1(\mathbb R^N)\backslash \{0\}\rightarrow \mathbb R$ associated with the parameter $\lambda > 0$ which are defined in \eqref{R_n}. It is important to stress that \
	$tu \in\mathcal N_{\lambda}$ if and only if $R_n(tu)=\lambda$ where $u \in H^1(\mathbb R^N)\backslash \{0\}$ and $t > 0$. Similarly, we observe that
	$J_{\lambda}(tu)=0$ if and only if $R_e(u)=\lambda$ where $u \in H^1(\mathbb R^N)\backslash \{0\}$ and $t > 0$.  
	Notice also that the functionals $R_e, R_n$ belong to 
	$C^1(H^1(\mathbb R^N)\backslash \{0\};\mathbb R)$. Moreover, the functionals $R_e, R_n$ are related with the energy functional $J_{\lambda}$ and its derivatives. More precisely, we consider the following results:
	\begin{prop}\label{relação entre R_n e J'}
		Let  $u\in H^1(\mathbb R^N)\backslash \{0\}$ be fixed. Then using \eqref{R_n} we obtain that $R_n(u)=\lambda$ if, and only if, $J'_{\lambda}(u)u=0$. Moreover, we obtain that $R_n(u)>\lambda$ if, and only if, $J_{\lambda}'(u)u>0$. It holds also that 
		$R_n(u)<\lambda$ if, and only if, $J'_{\lambda}(u)u<0$.
	\end{prop}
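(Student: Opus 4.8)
The plan is to reduce everything to an elementary computation. First I would evaluate the Gateaux derivative \eqref{J'} at $\varphi = u$: by the very definitions of $H$ and $D$ one has $H(u,u) = A(u)$ and $D(u,u) = B(u)$, so that
\[
J'_{\lambda}(u)u = \Vert u\Vert^2 - \lambda A(u) - B(u),
\]
where $A(u)$ and $B(u)$ are the quantities introduced in \eqref{A(u)} and \eqref{B(u)}. Under $(H_2)$ and $(H_3)$, together with H\"older's inequality and the Stein-Weiss inequality (Proposition \ref{Stein-Weiss}), both $A(u)$ and $B(u)$ are finite, so $J'_\lambda(u)u$ is a well-defined real number for every $u\in H^1(\mathbb R^N)$.

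The key observation, and the only point that really needs care, is that $A(u) > 0$ for every $u \in H^1(\mathbb R^N)\setminus\{0\}$. Indeed, by $(H_2)$ we have $a > 0$ in $\mathbb R^N$, and since $u \not\equiv 0$ the integrand $a(x)\vert u(x)\vert^q$ is nonnegative and strictly positive on a set of positive Lebesgue measure; hence $A(u) = \int_{\mathbb R^N} a(x)\vert u(x)\vert^q dx > 0$. In particular the denominator in the definition \eqref{R_n} of $R_n$ never vanishes on $H^1(\mathbb R^N)\setminus\{0\}$, so $R_n(u)$ is well defined and multiplying an (in)equality by $A(u)$ preserves its direction.

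Finally, using \eqref{R_n} in the form $R_n(u) = (\Vert u\Vert^2 - B(u))/A(u)$ and multiplying through by $A(u) > 0$, the relation $R_n(u) = \lambda$ is equivalent to $\Vert u\Vert^2 - B(u) = \lambda A(u)$, that is, to $\Vert u\Vert^2 - \lambda A(u) - B(u) = 0$, i.e. to $J'_\lambda(u)u = 0$. The same multiplication shows that $R_n(u) > \lambda$ is equivalent to $\Vert u\Vert^2 - B(u) > \lambda A(u)$, i.e. to $J'_\lambda(u)u > 0$, and likewise $R_n(u) < \lambda$ is equivalent to $J'_\lambda(u)u < 0$. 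Reading each chain of equivalences in both directions yields the three assertions of the proposition simultaneously. No serious obstacle is expected; the whole argument hinges on the strict positivity of $A(u)$, which is guaranteed by hypothesis $(H_2)$.
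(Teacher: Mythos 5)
Your proposal is correct and follows the only natural route: evaluating \eqref{J'} at $\varphi=u$ to get $J'_\lambda(u)u=\Vert u\Vert^2-\lambda A(u)-B(u)$, noting $A(u)>0$ for $u\neq 0$ since $a>0$, and multiplying the defining relation $R_n(u)=(\Vert u\Vert^2-B(u))/A(u)$ through by $A(u)$. The paper states this proposition without proof, treating it as an immediate consequence of the definitions, and your argument is exactly the computation it leaves implicit.
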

	\begin{prop}\label{relação entre R_e e J}
		Let  $u\in H^1(\mathbb R^N)\backslash \{0\}$ be fixed. Then using \eqref{R_n} we obtain that
		$R_e(u)=\lambda$ if, and only if, $J_{\lambda}(u)=0$. Moreover, we obtain that $R_e(u)>\lambda$ if, and only if, $J_{\lambda}(u)>0$. It holds also that $R_e(u)<\lambda$ if, and only if, $J_{\lambda}(u)<0$.
	\end{prop}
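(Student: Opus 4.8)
The plan is to reduce the whole statement to a single algebraic identity relating $J_\lambda(u)$, $R_e(u)$ and the strictly positive quantity $A(u)$, in complete analogy with Proposition \ref{relação entre R_n e J'}. First I would record that, under $(H_2)$ and $(H_3)$, the number $A(u)=\int_{\mathbb R^N}a(x)|u(x)|^q\,dx$ is finite --- by H\"older's inequality with $a\in L^r(\mathbb R^N)$, $r=2^*/(2^*-q)$, $|u|^q\in L^{2^*/q}(\mathbb R^N)$, and Remark \ref{imersaoRN} --- and, more importantly, \emph{strictly positive}: since $a>0$ in $\mathbb R^N$ and $u\not\equiv 0$, the function $a|u|^q$ is positive on a set of positive Lebesgue measure. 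Similarly $B(u)$ is finite by the Stein--Weiss inequality (Proposition \ref{Stein-Weiss}). In particular $\frac1q A(u)>0$, so the quotient $R_e(u)$ defined in \eqref{R_n} makes sense.

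Next I would simply regroup the definition \eqref{FE} of $J_\lambda$, isolating the term carrying $\lambda$:
\[
J_\lambda(u)=\Big(\frac12\|u\|^2-\frac1{2p}B(u)\Big)-\lambda\,\frac1q A(u).
\]
By the definition of $R_e$ in \eqref{R_n}, the bracket equals $\frac1q A(u)\,R_e(u)$, whence
\[
J_\lambda(u)=\frac1q A(u)\,\bigl(R_e(u)-\lambda\bigr).
\]
Since $\frac1q A(u)>0$, the real number $J_\lambda(u)$ has the same sign as $R_e(u)-\lambda$. Examining this identity in the three cases $J_\lambda(u)=0$, $J_\lambda(u)>0$, $J_\lambda(u)<0$ gives respectively $R_e(u)=\lambda$, $R_e(u)>\lambda$, $R_e(u)<\lambda$, and the reverse implications follow from the very same identity; this is exactly the claim.

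There is essentially no hard step here: the only point that deserves a sentence is the strict positivity of $A(u)$, which comes from $(H_2)$, together with the finiteness of $A(u)$ and $B(u)$, already needed to define $J_\lambda$ and $R_e$. The computation is the verbatim analogue of the one behind Proposition \ref{relação entre R_n e J'}, where one instead factors $J_\lambda'(u)u=\|u\|^2-B(u)-\lambda A(u)=A(u)\bigl(R_n(u)-\lambda\bigr)$; I would present it in the same two-line form.
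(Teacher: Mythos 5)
Your proof is correct and is exactly the argument the paper intends: the identity $J_\lambda(u)=\tfrac1q A(u)\bigl(R_e(u)-\lambda\bigr)$ together with the strict positivity of $A(u)$ (from $a>0$ and $u\not\equiv 0$) gives all three equivalences at once. The paper states this proposition without proof, treating it as an immediate consequence of the definitions in \eqref{FE} and \eqref{R_n}, so your write-up supplies precisely the omitted computation.
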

	Under these conditions, we need to consider the behavior for the functional $R_n$ and $R_e$ which allows us to ensure existence of ground state solutions and bound state solutions our main problem. Notice that is crucial in our arguments to find unique projections in the Nehari sets $\mathcal{N}_{\lambda}^-$ and $\mathcal{N}_{\lambda}^+$ for each $u\in H^1(\mathbb R^N)\backslash \{0\}$ with $\lambda \in (0,\lambda^*)$. Moreover, the functional $R_e$ given in \eqref{R_n} is used in order to decide the sign for solutions which lies in $\mathcal{N}_{\lambda}^-$.
	
	Now, we shall consider the fibering function for the functional $R_n$ defining the function $Q_n : [0,\infty) \to \mathbb{R}$ given by
	\begin{equation*}\label{Q_n}
		Q_n(t)=R_n(tu)=\frac{t^{2-q}\Vert u\Vert^2-t^{2p-q}B(u)}{A(u)},~t \geq 0.
	\end{equation*}
	It follows from the last identity that 
	\begin{equation}\label{Q'_n}
		Q'_n(t)=\frac{(2-q)t^{1-q}\Vert u\Vert^2-(2p-q)t^{2p-q-1}B(u)}{A(u)},~t>0.
	\end{equation}
	It is easy to see that the critical point of $Q_n$ provide us a functional $t_n:H^1(\mathbb R^N)\backslash\{0\}\to \mathbb{R}$ given by 
	\begin{equation*}\label{t_n}
		t_n(u)=\left[\frac{(2-q)\Vert u\Vert^2}{(2p-q)B(u)}\right]^{\frac{1}{2p-2}}.
	\end{equation*}
	Hence, we can define the functional $\Lambda_n : H^1(\mathbb R^N)\backslash\{0\}\to \mathbb{R}$ given by 
	\begin{equation*}\label{Q_n(t_n)}
		\Lambda_n (u) = Q_n(t_n(u))=\frac{C_{p,q}\Vert u\Vert^{\frac{2p-q}{p-1}}}{A(u)\left[B(u)\right]^{\frac{2-q}{2p-2}}},
	\end{equation*}
	where
	\begin{equation*}
		C_{p,q}=\left(\frac{2-q}{2p-q}\right)^{\frac{2-q}{2p-2}}\left(\frac{2p-2}{2p-q}\right).
	\end{equation*}
	It important to analyze the behavior of $Q_n$ and $Q'_n$ which allows to show that $Q_n$ admits an unique critical point. Namely, we mention that
	$$
	\displaystyle\lim_{t\to 0}\frac{Q_n(t)}{t^{2-q}}= \displaystyle\frac{\Vert u \Vert^2}{A(u)}>0,\, \,
	\displaystyle\lim_{t\rightarrow \infty}\frac{Q_n(t)}{t^{2p-q}}=
	-\frac{B(u)}{A(u)}<0,
	$$
	and
	$$
	\displaystyle\lim_{t\rightarrow 0}\frac{Q'_n(t)}{t^{1-q}}= \displaystyle\frac{(2-q)\Vert u \Vert^2}{A(u)}>0,\,\,
	\displaystyle\lim_{t\to\infty}\frac{Q'_n(t)}{t^{2p-q-1}}= \displaystyle -\frac{(2-q)B(u)}{A(u)}<0.
	$$
	In particular, we obtain that $Q'_n(t)>0$ holds for each $t\in (0,t_n(u))$ and $Q'_n(t)<0$ holds for each $t>t_n(u)$. Recall also that $Q_n(0)=0$ and $Q_n'(t_n(u)) = 0$. Thus, the number $t_n(u)>0$ is the unique critical point for $Q_n$ and $ Q_n(t_n(u))=\displaystyle\max_{t>0}Q_n(t)$.
	It is important to point out that the function $Q_e : [0,\infty) \to \mathbb{R}$ has a similar behavior. In fact, we consider the function
	\begin{equation*}\label{Q_e}
		Q_e(t)=R_e(tu)=\displaystyle q\frac{\displaystyle\frac{1}{2}\Vert u\Vert^2t^{2-q}-\displaystyle\frac{t^{2p-q}}{2p}B(u)}{A(u)},~t \geq 0.
	\end{equation*}
	Analogously, the first derivative of $Q_e$ is given by
	\begin{equation*}\label{Q'_e}
		Q'_e(t)=\displaystyle\frac{\displaystyle\frac{(2-q)}{2}t^{1-q}\Vert u\Vert^2-\displaystyle\frac{(2p-q)}{2p}t^{2p-q-1}B(u)}{\displaystyle\frac{1}{q}A(u)},~t>0.
	\end{equation*}
	It is easy to see that the critical point of $Q_e$ ensures the existence of a functional $t_e:H^1(\mathbb R^N)\backslash\{0\}\rightarrow \mathbb R$ given in the following form:
	\begin{equation*}\label{t_e}
		t_e(u)=\left[\displaystyle\frac{p(2-q)\Vert u\Vert^2}{(2p-q)B(u)}\right]^{\frac{1}{2p-2}}.
	\end{equation*}
	Hence, we consider the functional $\Lambda_e : H^1(\mathbb R^N)\backslash\{0\}\rightarrow \mathbb{R}$ given by
	\begin{equation*}\label{Q_e(t_e)}
		\Lambda_e(u) = Q_e(t_e(u))=\displaystyle\frac{\widetilde{C}_{p,q}\Vert u\Vert^{\frac{2p-q}{p-1}}}{ A(u) B(u)^{\frac{2-q}{2p-2}}},
	\end{equation*}
	where
	$$
	\widetilde{C}_{p,q}=p^{\displaystyle\frac{2-q}{2p-2}}\left(\displaystyle\frac{2-q}{2p-q}\right)^{\displaystyle\frac{2-q}{2p-2}}q\left(\displaystyle\frac{p-1}{2p-q}\right)
	$$
	
	It is is worthwhile to mention that we need to guarantee existence of an optimal $\lambda^* > 0$ in such way that $\mathcal N_{\lambda}^0=\emptyset$ is satisfied for each $\lambda \in (0, \lambda^*)$. Under these conditions, we shall prove that any nonzero function has projection in $\mathcal{N}_{\lambda}^+$ and $\mathcal{N}_{\lambda}^-$. To do this we need to consider the functionals $\Lambda_n$ and $\Lambda_e$ which are in $C^1$ class. As a consequence, we can prove the following result:
	
	\begin{prop}\label{relação ente lambda n e lambda e}
		Suppose ($H_1$)$-$($H_3$). Then $\Lambda_e(u)=C\Lambda_n(u)$ holds for some $C \in (0,1)$.
		\begin{proof}
			Firstly, we observe that $$\displaystyle\frac{\Lambda_e(u)}{\Lambda_n(u)}=\displaystyle\frac{\widetilde{C}_{p,q}}{C_{p,q}}=C $$ where $C > 0$. It is not hard to verify that
			$$\widetilde{C}_{p,q}=\displaystyle\frac{q p^{\frac{2-q}{2(p-1)}}C_{p,q}}{2}.$$ 
			Hence, we need to prove that $q p^{\frac{2-q}{2(p-1)}}<2$ holds. The last inequality is equivalent to $$\displaystyle\frac{(2-q)}{2(p-1)}\ln p+\ln q-ln 2<0.$$ Define the function $f : [0, \infty) \to \mathbb{R}$ in the following form: $$f(q)=\displaystyle\frac{(2-q)}{2(p-1)}\ln p+\ln q-ln 2.$$
			Recall that $f(q) < 0$ if only and only if $q p^{\frac{2-q}{2(p-1)}}<2$ holds where $1 \leq q < 2$. Notice that $f$ is in $C^1$ class. Furthermore, we observe that $f(1)=\ln p/[2(p-1)]-\ln 2$ and $f(2)=0$.
			Here we observe that $$f'(q)=-\displaystyle\frac{ln  p}{2(p-1)}+\displaystyle\frac{1}{q}\;\;\hbox{and}\;\; f''(q)=-\displaystyle\frac{1}{q^2}<0.$$ Hence $f$ is a concave function. Moreover, using the fact that $\ln p<p-1$, we obtain that  $f(1)<0$. Notice also that $f'(q)>0$ holds for each $1\leq q<2$. In fact, for each $1\leq q<2$, we know that 
			$$
			f'(q)=-\displaystyle\frac{ln p}{2(p-1)}+\displaystyle\frac{1}{q}>-\displaystyle\frac{p-1}{2(p-1)}+\displaystyle\frac{1}{q}=-\displaystyle\frac{1}{2}+\displaystyle\frac{1}{q}>0.
			$$
			Thus,  the function $t \mapsto f(t)$ is increasing for $1\leq q<2$. The last assertion implies that $f(q)< f(2) = 0$ holds for each $1\leq q<2$. Therefore, we obtain that $C<1$ showing the desired result. 
		\end{proof}
	\end{prop}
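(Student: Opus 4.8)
The plan is to read the conclusion directly off the closed forms of $\Lambda_n$ and $\Lambda_e$ already computed, reducing the whole statement to a single one–variable calculus estimate. First I would form the quotient $\Lambda_e(u)/\Lambda_n(u)$. Since
$$\Lambda_n(u)=\frac{C_{p,q}\Vert u\Vert^{\frac{2p-q}{p-1}}}{A(u)[B(u)]^{\frac{2-q}{2p-2}}},\qquad \Lambda_e(u)=\frac{\widetilde{C}_{p,q}\Vert u\Vert^{\frac{2p-q}{p-1}}}{A(u)[B(u)]^{\frac{2-q}{2p-2}}},$$
the factor $\Vert u\Vert^{(2p-q)/(p-1)}/\big(A(u)[B(u)]^{(2-q)/(2p-2)}\big)$ is common to both, and it is well defined and nonzero for $u\neq 0$ because $A(u)>0$ by $(H_2)$ and $B(u)>0$ by $(H_2)$ together with the Stein--Weiss inequality of Proposition \ref{Stein-Weiss}. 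Hence $\Lambda_e(u)=C\,\Lambda_n(u)$ with the purely numerical constant $C:=\widetilde{C}_{p,q}/C_{p,q}$, which in particular does not depend on $u$.

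Next I would simplify $C$. Cancelling the common term $\left(\tfrac{2-q}{2p-q}\right)^{(2-q)/(2p-2)}$ and using $2p-2=2(p-1)$, the ratio collapses to $C=\tfrac12\, q\, p^{(2-q)/(2(p-1))}$. That $C>0$ is immediate, since $p>1$ (indeed $p>2_{\alpha,\mu}>1$, as $0<2\alpha+\mu<N$ forces $2_{\alpha,\mu}=(2N-2\alpha-\mu)/N>1$) and $1\le q<2$.

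The only real content is the strict bound $C<1$, i.e. $q\,p^{(2-q)/(2(p-1))}<2$. Taking logarithms (legitimate, both sides positive), this is equivalent to $f(q)<0$ on $[1,2)$, where $f(q):=\tfrac{2-q}{2(p-1)}\ln p+\ln q-\ln 2$. I would note $f(2)=0$ and compute $f'(q)=-\tfrac{\ln p}{2(p-1)}+\tfrac1q$; the elementary inequality $\ln p<p-1$ (valid for $p>1$) gives $\tfrac{\ln p}{2(p-1)}<\tfrac12$, hence $f'(q)>\tfrac1q-\tfrac12>0$ for every $q\in[1,2)$. Thus $f$ is strictly increasing on $[1,2]$, so $f(q)<f(2)=0$ throughout $[1,2)$, which yields $C<1$ and completes the proof.

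I expect everything except this last step to be pure bookkeeping, and even there the only subtle point is recognizing that the bound $\ln p<p-1$ is exactly strong enough to keep $f'>0$ across the whole admissible range of $q$, so that monotonicity together with the boundary value $f(2)=0$ settles the estimate with no case distinction in $p$.
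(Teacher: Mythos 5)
Your proposal is correct and follows essentially the same route as the paper: the same reduction of $C=\widetilde{C}_{p,q}/C_{p,q}$ to $\tfrac12 q\,p^{(2-q)/(2(p-1))}$, the same auxiliary function $f(q)=\tfrac{2-q}{2(p-1)}\ln p+\ln q-\ln 2$, and the same use of $\ln p<p-1$ to get $f'>0$ on $[1,2)$ and conclude $f(q)<f(2)=0$. The only (immaterial) differences are that you explicitly note $A(u),B(u)>0$ so the quotient is well defined, while the paper additionally records the (unneeded) concavity of $f$.
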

	At this stage, we shall prove a relation between the derivative of the fiber function $t \mapsto Q'_n(t)$ and the second derivative of the energy function $J_\lambda$. In order to do that we consider the functional $A:H^1(\mathbb R^N)\to \mathbb R$ which in given by \eqref{A(u)}. Recall that $A$ is in $C^1$ class and the Gauteaux derivative of $A$ is given by  
	\begin{eqnarray*}
		A'(u)\varphi~=~ H(u,\varphi) =  q\displaystyle\int_{\mathbb R^N}a(x)\vert u\vert^{q-2}u\varphi dx,  u\in H^1(\R^N)\backslash \{0\}, \varphi \in H^1(\R^N).
	\end{eqnarray*}
	Similarly, the functional $B : H^1(\mathbb R^N)\to \mathbb R$ which in given by \eqref{B(u)} is also in $C^1$ class. For this functional the Gauteaux derivative is given by 
	\begin{equation*}\label{D(u)}
		B'(u) \varphi~ = ~2p D(u,\varphi)=\displaystyle\int\limits_{\mathbb R^N}\int\limits_{\mathbb R^N}\frac{b(y)\vert u(y)\vert^p}{\vert x\vert^\alpha\vert x-y\vert^\mu \vert y\vert^\alpha}b(x)\vert u(x)\vert^{p-2}u\varphi\;dx dy,\;\;u,\varphi\in H^1(\mathbb R^N).
	\end{equation*}
	Let $u\in H^1(\R^N)\backslash \{0\}$ be a fixed function such that $R_n(tu)=\lambda$ with $t > 0$. Using \eqref{Q'_n}, \eqref{A(u)}, \eqref{B(u)} we have that
	\begin{eqnarray}\label{C1}
		\nonumber tA'(tu)tu\frac{d}{dt}R_n(tu) =  (2-q)t^2\Vert u\Vert^2-(2p-q)t^{2p}B(u).
	\end{eqnarray}
	On the other hand, by using \eqref{J''}, it follows that
	\begin{eqnarray}\label{D2}
		J_{\lambda}''(tu)(tu,tu)  & = & t^2\Vert u\Vert^2-\lambda(q-1)t^q A(u)
		- (2p-1)t^{2p} B(u).
	\end{eqnarray}
	Now, by using the fact that $\lambda=R_n(tu)$ and \eqref{D2}, we infer that
	$
	J_{\lambda}''(tu)(tu,tu) \nonumber = (2-q)t^2\Vert u\Vert^2-(2p-q)t^{2p}B(u).
	$
	Therefore, by using \eqref{C1}, for each $u\in H^1(\R^N)\backslash \{0\}$ such that $R_n(tu)=\lambda, t > 0$, we obtain that
	\begin{equation*}\label{R_n e J''}
		\displaystyle\frac{d}{dt}R_n(tu)=\displaystyle\frac{q}{t}\frac{J_{\lambda}''(tu)(tu,tu)}{A'(tu)tu}.
	\end{equation*}
	Under these conditions, by using the last identity, we are able to prove the following result:
	\begin{prop}\label{d/dtR_n(tu) e J''}
		Suppose ($H_1$)$-$($H_3$). Let $u \in H^1(\mathbb R^N)\backslash \{0\}$ be a fixed function such that $R_n(tu)=\lambda$ holds for some $t>0$. Then we obtain that $\frac{d}{dt}R_n(tu)>0$ if and only if $ J_{\lambda}''(tu)(tu,tu)>0$. Furthermore, we observe that $\frac{d}{dt}R_n(tu)<0$ if and only if $J_{\lambda}''(tu)(tu,tu)<0$. It also holds that $\frac{d}{dt}R_n(tu)=0$ if and only if $ J_{\lambda}''(tu)(tu,tu)=0$.
	\end{prop}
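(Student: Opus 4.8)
The plan is to read the claim off the identity established in the lines immediately preceding the statement: for every $u \in H^1(\mathbb{R}^N)\setminus\{0\}$ and every $t>0$ with $R_n(tu)=\lambda$,
$$\frac{d}{dt}R_n(tu)=\frac{q}{t}\,\frac{J_\lambda''(tu)(tu,tu)}{A'(tu)(tu)}.$$
Since $q\geq 1>0$ by $(H_2)$ and $t>0$, the prefactor $q/t$ is strictly positive, so the whole proposition follows once one verifies that the denominator $A'(tu)(tu)$ is strictly positive; then the sign of $\frac{d}{dt}R_n(tu)$ and the sign of $J_\lambda''(tu)(tu,tu)$ agree, which is precisely the three stated equivalences.

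To get positivity of $A'(tu)(tu)$ I would compute it explicitly. From the Gâteaux derivative of the functional $A$ in \eqref{A(u)}, namely $A'(v)\varphi=q\int_{\mathbb{R}^N}a(x)|v|^{q-2}v\varphi\,dx$, taking $v=\varphi=tu$ and using homogeneity gives $A'(tu)(tu)=q\,t^{q}\int_{\mathbb{R}^N}a(x)|u|^q\,dx=q\,t^{q}A(u)$. By $(H_2)$ we have $a>0$ in $\mathbb{R}^N$, and $u\not\equiv 0$ forces the set $\{x:u(x)\neq 0\}$ to have positive Lebesgue measure, whence $A(u)=\int_{\mathbb{R}^N}a(x)|u|^q\,dx>0$. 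Together with $q\geq 1$ and $t>0$ this yields $A'(tu)(tu)=q\,t^{q}A(u)>0$, as needed.

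Combining the two steps, the coefficient $q/(t\,A'(tu)(tu))$ is a strictly positive real number, so $\frac{d}{dt}R_n(tu)$ and $J_\lambda''(tu)(tu,tu)$ share the same sign, giving the equivalences $\frac{d}{dt}R_n(tu)>0\iff J_\lambda''(tu)(tu,tu)>0$, $\frac{d}{dt}R_n(tu)<0\iff J_\lambda''(tu)(tu,tu)<0$, and $\frac{d}{dt}R_n(tu)=0\iff J_\lambda''(tu)(tu,tu)=0$ simultaneously. I do not expect any genuine obstacle here; the only point that needs attention is that the displayed identity is valid only on the constraint $R_n(tu)=\lambda$ — away from it the relation between $\frac{d}{dt}R_n(tu)$ and $J_\lambda''$ acquires an extra term proportional to $\lambda-R_n(tu)$ — but that constraint is part of the hypotheses of the proposition, so nothing further is required.
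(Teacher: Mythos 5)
Your proof is correct and follows essentially the same route as the paper: the paper derives the identity $\frac{d}{dt}R_n(tu)=\frac{q}{t}\,J_\lambda''(tu)(tu,tu)/A'(tu)(tu)$ on the constraint $R_n(tu)=\lambda$ in the lines immediately preceding the statement and reads the three equivalences off the sign of the prefactor. Your explicit verification that $A'(tu)(tu)=q\,t^{q}A(u)>0$ (using $a>0$ and $u\not\equiv 0$) is a detail the paper leaves implicit, and it is the right thing to check.
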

	Similarly, we shall prove a relationship between the first derivative of function $Q_e$ and the first derivative of the energy functional $J_\lambda$. Namely, for each $u \in H^1(\mathbb R^N)\backslash \{0\}$ such that $R_e(tu)=\lambda$ and taking into account \eqref{J'}, we deduce that  
	\begin{equation}\label{D4}
		\frac{d}{dt}R_e(tu)=\frac{q}{t}\frac{J_{\lambda}'(tu)tu}{A(tu)}, t > 0.
	\end{equation}
	As a consequence, by using \eqref{D4}, we can prove the following result:
	\begin{prop}\label{relação entre R_n e J''}
		Suppose ($H_1$)$-$($H_3$). Let $u \in H^1(\mathbb R^N)\backslash \{0\}$ be a fixed function such that $R_e(tu)=\lambda$ for some $t>0$. Then we obtain that $\frac{d}{dt}R_e(tu)>0$ if and only if $ J_{\lambda}'(tu)tu>0$. Furthermore, we obtain that $\frac{d}{dt}R_e(tu)<0$ if and only if $J_{\lambda}'(tu)tu<0$. It holds also that $\frac{d}{dt}R_e(tu)=0$ if and only if $J_{\lambda}'(tu)tu=0$.
	\end{prop}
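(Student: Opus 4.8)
The plan is to deduce all three equivalences directly from the identity \eqref{D4}, so the first task is to justify \eqref{D4} itself by an explicit computation entirely parallel to the one carried out just before Proposition \ref{d/dtR_n(tu) e J''}. Concretely, I would differentiate $Q_e(t)=R_e(tu)$ in $t$, using the homogeneity relations $\|tu\|^2=t^2\|u\|^2$, $A(tu)=t^qA(u)$ and $B(tu)=t^{2p}B(u)$, and then compare with $J_{\lambda}'(tu)tu = t^2\|u\|^2-\lambda t^q A(u)-t^{2p}B(u)$, which is obtained from \eqref{J'} by the same scaling. The constraint $R_e(tu)=\lambda$ lets one replace $\lambda$ by the quotient defining $R_e$; substituting this into $J_{\lambda}'(tu)tu$ collapses it to $\tfrac{2-q}{2}t^2\|u\|^2-\tfrac{2p-q}{2p}t^{2p}B(u)$, and dividing by $t\,A(tu)/q$ reproduces exactly $Q_e'(t)$, which is precisely \eqref{D4}.

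Once \eqref{D4} is available, the conclusion is immediate. Indeed $q>0$ and $t>0$, so the scalar factor $q/t$ is strictly positive; moreover, by $(H_2)$ we have $a>0$ in $\mathbb{R}^N$ and $u\not\equiv 0$, hence $A(tu)=t^q A(u)=t^q\int_{\mathbb R^N}a(x)|u|^q\,dx>0$. Therefore $\frac{d}{dt}R_e(tu)$ and $J_{\lambda}'(tu)tu$ carry the same sign, which yields simultaneously the three stated equivalences (positive, negative, zero) by reading \eqref{D4} off in each case.

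I do not anticipate a genuine obstacle here; the only delicate point is the algebraic bookkeeping in deriving \eqref{D4}, where the constraint $R_e(tu)=\lambda$ must be used to eliminate the parameter $\lambda$ before simplifying, exactly as in the derivation preceding Proposition \ref{d/dtR_n(tu) e J''}. No compactness, embedding, or Brezis--Lieb type arguments enter here; the positivity of $A(tu)$ rests only on $(H_2)$ together with $u\not\equiv 0$.
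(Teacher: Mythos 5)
Your proposal is correct and follows essentially the same route as the paper, which likewise reduces the proposition to the identity \eqref{D4} and the positivity of $q/t$ and $A(tu)$; you merely spell out the elimination of $\lambda$ via the constraint $R_e(tu)=\lambda$, which the paper leaves implicit. The computation checks out: $J_\lambda'(tu)tu$ indeed collapses to $\tfrac{2-q}{2}t^2\|u\|^2-\tfrac{2p-q}{2p}t^{2p}B(u)$, matching $\tfrac{t}{q}A(tu)\,Q_e'(t)$.
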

	\begin{rmk} \label{obs} Let $u \in H^1(\mathbb R^N)\backslash \{0\}$ be a fixed function. Hence, by using some straightforward identities, we deduce that $Q_n(t)-Q_e(t)=\frac{t}{q}\frac{d}{dt}Q_e(t)$ holds for each $t>0$.
	\end{rmk}
	As a consequence, by using Remark \ref{obs}, we can prove the following result:
	\begin{prop}\label{relação entre Q_n e Q_e}
		Suppose ($H_1$)$-$($H_3$). Then $\frac{d}{dt}Q_e(t)>0$ if and only if $Q_n(t)>Q_e(t)$ which makes sense only for $t\in (0,t_e(u))$. Furthermore, $\frac{d}{dt}Q_e(t)<0$ if and only if $Q_n(t)<Q_e(t)$ which is satisfied only for $t\in (t_e(u),\infty)$. Moreover, 
		$\frac{d}{dt}Q_e(t)=0$ if and only if $Q_n(t)=Q_e(t)$ which is verified only for $t=t_e(u)$.
	\end{prop}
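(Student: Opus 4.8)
The plan is to derive everything from the identity recorded in Remark \ref{obs}, namely $Q_n(t)-Q_e(t)=\frac{t}{q}\frac{d}{dt}Q_e(t)$, which holds for every $t>0$. Since $1\leq q<2$ and $t>0$, the coefficient $t/q$ is strictly positive, so at each fixed $t>0$ the numbers $Q_n(t)-Q_e(t)$ and $\frac{d}{dt}Q_e(t)$ have the same sign. This immediately yields the three equivalences in the statement: $\frac{d}{dt}Q_e(t)>0$ if and only if $Q_n(t)>Q_e(t)$; $\frac{d}{dt}Q_e(t)<0$ if and only if $Q_n(t)<Q_e(t)$; and $\frac{d}{dt}Q_e(t)=0$ if and only if $Q_n(t)=Q_e(t)$.

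Next I would pin down the set of $t$ on which each alternative occurs by running the same sign analysis for $Q'_e$ that was already carried out for $Q'_n$. First note that for $u\in H^1(\mathbb R^N)\backslash\{0\}$ one has $\Vert u\Vert>0$, and, since $a>0$, $b>0$ in $\mathbb R^N$ and the Stein-Weiss kernel is positive, also $A(u)>0$ and $B(u)>0$, so every quotient below is well defined. From the explicit formula for $Q'_e$ one computes
\[
\lim_{t\to 0}\frac{Q'_e(t)}{t^{1-q}}=\frac{q(2-q)\Vert u\Vert^2}{2A(u)}>0,\qquad
\lim_{t\to\infty}\frac{Q'_e(t)}{t^{2p-q-1}}=-\frac{q(2p-q)B(u)}{2pA(u)}<0,
\]
while equating the numerator of $Q'_e(t)$ to zero produces the unique positive root $t=t_e(u)$. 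Hence $Q'_e>0$ on $(0,t_e(u))$, $Q'_e<0$ on $(t_e(u),\infty)$ and $Q'_e(t_e(u))=0$; in particular $Q_e$ strictly increases on $(0,t_e(u))$, strictly decreases on $(t_e(u),\infty)$ and attains its maximum $\Lambda_e(u)$ at $t_e(u)$.

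Combining the two steps finishes the argument. Fix $t>0$. If $t\in(0,t_e(u))$ then $\frac{d}{dt}Q_e(t)>0$, hence $Q_n(t)>Q_e(t)$; conversely, if $Q_n(t)>Q_e(t)$ then $\frac{d}{dt}Q_e(t)>0$, which by the sign analysis of $Q'_e$ forces $t\in(0,t_e(u))$. The cases $t=t_e(u)$ and $t>t_e(u)$ are treated in exactly the same way, yielding $Q_n(t)=Q_e(t)$ if and only if $t=t_e(u)$, and $Q_n(t)<Q_e(t)$ if and only if $t\in(t_e(u),\infty)$, which is precisely the claim.

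I do not expect any real obstacle here: once Remark \ref{obs} is in hand the equivalences are immediate, and the only computation is the verification that $Q'_e$ changes sign exactly once, which mirrors the analysis of $Q'_n$ already present in the text. The single point requiring care is that $A(u)$, $B(u)$ and $\Vert u\Vert$ are all strictly positive for $u\neq 0$, since this is what legitimizes the limit computations, the uniqueness of $t_e(u)$, and all the sign manipulations.
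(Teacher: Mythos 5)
Your proof is correct and follows exactly the route the paper intends: the three equivalences come immediately from the identity $Q_n(t)-Q_e(t)=\frac{t}{q}\frac{d}{dt}Q_e(t)$ of Remark \ref{obs} (with $t/q>0$), and the localization on $(0,t_e(u))$, $\{t_e(u)\}$, $(t_e(u),\infty)$ follows from the single sign change of $Q'_e$ at its unique critical point, mirroring the analysis already done for $Q'_n$. The paper gives no further argument beyond citing Remark \ref{obs}, so your write-up simply supplies the same details more explicitly.
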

	Now, we shall consider a Brezis-Lieb result for the functional $A : H^1(\mathbb{R}^N) \to \mathbb{R}$. Namely, we can prove the following result:
	\begin{prop}\label{A(x)}
		Suppose ($H_1$)$-$($H_3$). Let $(u_k)$ be is a bounded sequence in $L^m(\mathbb R^N)$ in such way that $u_k\rightharpoonup u$ in $H^1(\mathbb R^N)$. Then we obtain that 
		$$
		\displaystyle\int_{\mathbb R^N}a(x)\vert u_k\vert^q dx -\displaystyle\int_{\mathbb R^N}a(x)\vert u_k-u\vert^q dx\rightarrow \displaystyle\int_{\mathbb R^N}a(x)\vert u\vert^q dx 
		$$
		\begin{proof}
			The proof follows using the same ideas discussed in \cite{LIEB1983}.
			We omit the details.
		\end{proof}
	\end{prop}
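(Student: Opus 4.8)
The statement is a Brezis–Lieb type splitting for the concave term $A(u) = \int_{\mathbb R^N} a(x)|u|^q\,dx$, so the plan is to mimic the classical Brezis–Lieb argument but carry the weight $a \in L^r(\mathbb R^N)$ through the estimates. First I would recall that since $u_k \rightharpoonup u$ in $H^1(\mathbb R^N)$, the sequence is bounded in $H^1(\mathbb R^N)$ and hence, by Remark \ref{imersaoRN}, bounded in $L^{2^*}(\mathbb R^N)$; moreover, after passing to a subsequence, $u_k \to u$ a.e.\ in $\mathbb R^N$ (via the compact embedding on balls $B(0,R)$ in Proposition \ref{imersaomega} and a diagonal argument). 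This a.e.\ convergence is what makes the pointwise elementary inequality below usable.

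Next I would use the elementary inequality: for each $\varepsilon > 0$ there is $c(\varepsilon) > 0$ with $\big|\,|s+t|^q - |s|^q - |t|^q\,\big| \le \varepsilon |s|^q + c(\varepsilon)|t|^q$ for all $s,t \in \mathbb R$ (valid since $1 \le q < 2$). Applying this with $s = u_k - u$, $t = u$, and setting
$$
W_{k}^{\varepsilon}(x) = \Big(\, a(x)\big|\,|u_k|^q - |u_k-u|^q - |u|^q\,\big| - \varepsilon\, a(x)|u_k - u|^q \,\Big)^{+},
$$
one gets $0 \le W_k^{\varepsilon}(x) \le c(\varepsilon)\, a(x)|u(x)|^q$, and the right-hand side is in $L^1(\mathbb R^N)$ because $a \in L^r(\mathbb R^N)$ with $r = 2^*/(2^*-q)$ and $u \in L^{2^*}(\mathbb R^N)$ (Hölder with exponents $r$ and $2^*/q$). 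Since $W_k^{\varepsilon}(x) \to 0$ a.e., the Dominated Convergence Theorem gives $\int_{\mathbb R^N} W_k^{\varepsilon}\,dx \to 0$ as $k \to \infty$. Because $a(x)\big|\,|u_k|^q - |u_k-u|^q - |u|^q\,\big| \le W_k^{\varepsilon}(x) + \varepsilon\, a(x)|u_k-u|^q$ and $\sup_k \int_{\mathbb R^N} a(x)|u_k-u|^q\,dx =: M < \infty$ (again by Hölder and the $L^{2^*}$-boundedness of $u_k - u$), we conclude
$$
\limsup_{k\to\infty}\int_{\mathbb R^N} a(x)\big|\,|u_k|^q - |u_k-u|^q - |u|^q\,\big|\,dx \le \varepsilon M.
$$
Letting $\varepsilon \to 0$ yields $\int_{\mathbb R^N} a(x)\big(|u_k|^q - |u_k-u|^q - |u|^q\big)\,dx \to 0$, which is exactly the claimed identity.

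The only genuinely delicate point — and the one I would single out as the main obstacle — is the uniform integrable bound that legitimizes dominated convergence in the presence of the weight: one must check that the dominating function $c(\varepsilon)a(x)|u(x)|^q$ really lies in $L^1(\mathbb R^N)$, which is precisely where hypothesis $(H_3)$ (the exponent $r = 2^*/(2^*-q)$) is used, and that the residual term $\varepsilon\, a(x)|u_k-u|^q$ is uniformly bounded in $k$, which uses the $H^1$-boundedness of $(u_k)$ together with Remark \ref{imersaoRN}. Everything else is the standard Brezis–Lieb bookkeeping, so I would simply cite \cite{LIEB1983} for the remaining routine details, exactly as the statement's proof does, after having made the weight-dependent estimates explicit.
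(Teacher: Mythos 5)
Your proposal is correct and follows exactly the route the paper intends: the paper's proof is just a pointer to Brezis--Lieb \cite{LIEB1983}, and you have carried out that classical truncation argument with the weight $a\in L^{r}(\mathbb R^N)$, $r=2^*/(2^*-q)$, correctly inserted via H\"older against $L^{2^*}(\mathbb R^N)$. The only cosmetic point is that the a.e.\ convergence holds a priori only along a subsequence, so one should close with the standard remark that the limit $\int_{\mathbb R^N}a(x)\vert u\vert^q\,dx$ is independent of the subsequence, whence the full sequence converges.
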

	
	In the sequel, we shall consider some results taking into account the potentials $a, b : \mathbb{R}^N \to \mathbb{R}$. Firstly, we prove the following result: 
	\begin{prop}\label{c1} 	Suppose ($H_1$)$-$($H_3$). Let $(v_k)\in H^1(\mathbb R ^N)$ be a bounded sequence such that $v_k\rightharpoonup v$ in $H^1(\mathbb R ^N$). Then we obtain that 
		$$
		\displaystyle\lim_{k\longrightarrow\infty}\int_{\mathbb{R}^N}a(x)\vert v_k\vert^qdx=\int_{\mathbb{R}^N}a(x)\vert v\vert ^qdx
		$$
		\begin{proof} Firstly, we observe that 
			\begin{equation}\label{a1}
				\displaystyle\int_{\mathbb{R}^N}a(x)\vert v_k\vert^qdx=\displaystyle\int_{\mathbb{R}^N\backslash B(0,R)} a(x)\vert v_k\vert^qdx+\int_{B(0,R)}a(x)\vert v_k\vert^qdx. 
			\end{equation}
			Since $v_k\rightharpoonup v$ in $H^1(\mathbb R ^N)$ there exists $h_l \in L^l(B(0,R)), 1\leq l<2^*$, such that $v_k \rightarrow v$ in $L^l(B(0,R))$. Moreover, we observe that $v_k(x) \rightarrow v(x)$ a. e. in $B(0,R)$ and $\vert v_k \vert \le h_l$ in $B(0,R)$. 	Notice also  that $\left\vert a(x)\vert v_k\vert^q\right\vert\leq a(x)h_l^q \in L^1(B(0,R))$. Hence, applying the Dominated Converge Theorem, we obtain that
			$$
			\displaystyle \lim_{k \to \infty} \int_{B(0,R)}a(x)\vert v_k\vert^qdx = \displaystyle\int_{B(0,R)}a(x)\vert v\vert^qdx.
			$$
			
			On the other hand, by using H\"older inequality and Remark \ref{imersaoRN}, we mention that 
			\begin{eqnarray*}
				\displaystyle\int_{\mathbb{R}^N\backslash B(0,R)}\!a(x)\vert v_k\vert^qdx 
				&\leq &\left[\left(\displaystyle\int_{\mathbb{R}^N\backslash B(0,R)}\!a(x)^rdx\right)^\frac{1}{r}\displaystyle\left(\int_{\mathbb{R}^N\backslash B(0,R)}\vert v_k\vert^{2^*}dx\right)^\frac{q}{2^*}\right] \leq S_{2^*}^{q}\left(\displaystyle\int_{\mathbb{R}^N\backslash B(0,R)}\!a(x)^rdx\right)^\frac{1}{r}\Vert v_k\Vert^q.
			\end{eqnarray*}
			Since $v_k\rightharpoonup v$ in $H^1(\mathbb R^N)$ there exists $C > 0$ in such way that $\Vert v_k\Vert\leq C$ holds. As a consequence, by using the Dominated Convergence Theorem, we obtain that 
			\begin{eqnarray*}
				\displaystyle\lim_{R\to \infty}\displaystyle\int_{\mathbb{R}^N\backslash B(0,R)}a(x)\vert v_k\vert^qdx 
				&\leq &  C_1 \displaystyle \lim_{R\to \infty}\displaystyle\left(\int_{\mathbb{R}^N\backslash B(0,R)}a(x)^rdx\right)^\frac{1}{r} = C_1 \lim_{R\rightarrow\infty}\displaystyle\left(\int_{\mathbb{R}^N}a(x)^r\chi_{\mathbb{R}^N\backslash B(0,R)}dx\right)^\frac{1}{r} = 0.
			\end{eqnarray*}
			Here was used the fact that $\vert a(x)^r\chi_{\mathbb R^N\backslash B(0,R)}\vert\leq a^r(x)\in L^1(\mathbb R^N)$. Similiarly, we also obtain that
			\begin{eqnarray*}
				\displaystyle\lim_{R\to \infty}\displaystyle\int_{\mathbb{R}^N\backslash B(0,R)}a(x)\vert v\vert^qdx 
				&\leq &  C_1 \lim_{R\rightarrow\infty}\displaystyle\left(\int_{\mathbb{R}^N}a(x)^r\chi_{\mathbb{R}^N\backslash B(0,R)}dx\right)^\frac{1}{r} = 0.
			\end{eqnarray*}
			Therefore, taking into account \eqref{a1} and the estimates given just above, we obtain that
			\begin{eqnarray*}
				\displaystyle\lim_{k\rightarrow\infty}\int_{\mathbb{R}^N}a(x)\vert v_k\vert^qdx 
				& = & \displaystyle\int_{\mathbb{R}^N\backslash B(0,R)} a(x)\vert v\vert^qdx+\int_{B(0,R)}a(x)\vert v\vert^qdx =\int_{\mathbb{R}^N}a(x)\vert v\vert^qdx.
			\end{eqnarray*}
			This ends the proof. 
		\end{proof}
	\end{prop}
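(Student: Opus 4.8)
The plan is to localize: write $\mathbb{R}^N=B(0,R)\cup\big(\mathbb{R}^N\setminus B(0,R)\big)$, handle the bounded piece by compactness and the exterior piece by the decay of the tail of $a$. First I would fix $R>0$ and split
\[
\int_{\mathbb{R}^N}a(x)|v_k|^q\,dx=\int_{B(0,R)}a(x)|v_k|^q\,dx+\int_{\mathbb{R}^N\setminus B(0,R)}a(x)|v_k|^q\,dx,
\]
and likewise for $v$, so that it suffices to control each piece separately.

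For the ball, since $v_k\rightharpoonup v$ in $H^1(\mathbb{R}^N)$ the restrictions are bounded in $H^1(B(0,R))$; by Proposition \ref{imersaomega} the embedding $H^1(B(0,R))\hookrightarrow\hookrightarrow L^{\ell}(B(0,R))$ is compact for each $\ell\in[1,2^*)$, so along a subsequence $v_k\to v$ in $L^{\ell}(B(0,R))$ and pointwise a.e. in $B(0,R)$. To pass to the limit in $\int_{B(0,R)}a|v_k|^q$ I would \emph{not} dominate $|v_k|^q$ by a single fixed function — that is not available, since the embedding into $L^{2^*}$ is only continuous — but instead use equi-integrability: by Hölder with the conjugate exponents $r$ and $r'=2^*/q$ (recall $r=2^*/(2^*-q)$ from $(H_3)$), for every measurable $E\subset B(0,R)$ one has
\[
\int_E a(x)|v_k|^q\,dx\le\Big(\int_E a(x)^r\,dx\Big)^{1/r}\|v_k\|_{L^{2^*}(B(0,R))}^q\le C\Big(\int_E a(x)^r\,dx\Big)^{1/r},
\]
where $C$ depends only on $\sup_k\|v_k\|$ and $S_{2^*}$ (Remark \ref{imersaoRN}); since $a^r\in L^1$, the right side goes to $0$ as $|E|\to0$, uniformly in $k$. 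Combined with $v_k\to v$ a.e., the Vitali convergence theorem yields $\int_{B(0,R)}a|v_k|^q\,dx\to\int_{B(0,R)}a|v|^q\,dx$.

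For the exterior part, Hölder (same exponents) together with Remark \ref{imersaoRN} gives
\[
\int_{\mathbb{R}^N\setminus B(0,R)}a(x)|v_k|^q\,dx\le\Big(\int_{\mathbb{R}^N}a(x)^r\chi_{\mathbb{R}^N\setminus B(0,R)}\,dx\Big)^{1/r}S_{2^*}^q\|v_k\|^q\le C_1\Big(\int_{\mathbb{R}^N}a(x)^r\chi_{\mathbb{R}^N\setminus B(0,R)}\,dx\Big)^{1/r},
\]
with $C_1$ independent of $k$, and the identical bound holds with $v$ in place of $v_k$; since $a^r\in L^1(\mathbb{R}^N)$ and $a^r\chi_{\mathbb{R}^N\setminus B(0,R)}\to0$ pointwise, the Dominated Convergence Theorem makes this quantity tend to $0$ as $R\to\infty$. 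Finally I would assemble the pieces: given $\varepsilon>0$, pick $R$ so large that both exterior integrals (uniformly in $k$, and for $v$) are below $\varepsilon$, then send $k\to\infty$ using the ball estimate; since the resulting limit $\int_{\mathbb{R}^N}a|v|^q$ does not depend on the extracted subsequence, the convergence holds for the whole sequence. The only genuinely delicate point is this ball estimate: one must avoid dominating by a single $L^{2^*}$ function (unavailable because $H^1\hookrightarrow L^{2^*}$ is not compact) and instead invoke equi-integrability, which is exactly what the integrability exponent $r=2^*/(2^*-q)$ in $(H_3)$ was chosen to provide; all remaining steps are routine Hölder bookkeeping together with the tail control of $a^r$.
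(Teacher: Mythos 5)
Your proposal is correct and follows the same overall strategy as the paper: the same decomposition of $\mathbb{R}^N$ into $B(0,R)$ and its complement, the same H\"older estimate with exponents $r$ and $2^*/q$ together with the tail control $\int_{\mathbb{R}^N\setminus B(0,R)}a^r\,dx\to 0$ for the exterior piece, and the same final assembly. The only divergence is on the ball: the paper extracts from the strong convergence $v_k\to v$ in $L^{l}(B(0,R))$, $l<2^*$, a dominating function $h_l\in L^{l}(B(0,R))$ and applies dominated convergence to $a|v_k|^q\le a h_l^q$, whereas you derive the uniform bound $\int_E a|v_k|^q\,dx\le C\bigl(\int_E a^r\,dx\bigr)^{1/r}$, read it as equi-integrability, and invoke Vitali. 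Your instinct here is sound, and your version is in fact the more robust one: for $a h_l^q$ to lie in $L^1(B(0,R))$ with $a\in L^{r}$, $r=2^*/(2^*-q)$, one needs $h_l\in L^{2^*}(B(0,R))$, which is exactly the exponent excluded by the compact embedding, so the paper's dominating function (guaranteed only in $L^l$ for $l<2^*$) leaves a small, fixable gap that your equi-integrability argument sidesteps. An equally quick alternative is to note that $|v_k|^q$ is bounded in $L^{2^*/q}(B(0,R))$ and converges a.e., hence weakly in $L^{2^*/q}(B(0,R))$, and to pair it with $a\in L^{r}=L^{(2^*/q)'}$. Either way the conclusion stands, and your assembly of the two pieces (uniform-in-$k$ smallness of the tails, then $k\to\infty$ on the ball, then subsequence independence) is complete.
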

	\begin{prop}\label{c2} 	Suppose ($H_1$)$-$($H_4$). Let $(v_k)\in H^1(\mathbb R ^N)$ be a bounded sequence such that $v_k\rightharpoonup v$ in $H^1(\mathbb R ^N$). Then we obtain that 
		$$
		\displaystyle\lim_{k\longrightarrow\infty}\int_{\mathbb{R}^N}\int_{\mathbb{R}^N}\frac{b(y)\vert u_k\vert^pb(x)\vert u_k\vert^p}{\vert x\vert^\alpha\vert x-y\vert^\mu\vert y\vert^\alpha}dxdy=\displaystyle\int_{\mathbb{R}^N}\int_{\mathbb{R}^N}\frac{b(y)\vert u\vert^pb(x)\vert u\vert^p}{\vert x\vert^\alpha\vert x-y\vert^\mu\vert y\vert^\alpha}dxdy
		$$
		\begin{proof}
			Firstly, by using the fact that $u_k\rightharpoonup u$ in $H^1(\mathbb R^N)$ there exists $h_l\in L^l(B(0,R)), 1\leq l<2^*$, such that
			$u_k \rightarrow u$ in $L^l(B(0,R))$ and $u_k \rightarrow u$ a.e in $B(0,R)$ with $\vert u_k \vert \le h_l$ in $B(0,R)$. Now, for each $R>0$, we mention that
			\begin{eqnarray}\label{J_3cont}
				\nonumber\displaystyle\lim_{k\rightarrow\infty}\displaystyle \int_{\mathbb R^N}\int_{\mathbb R^N}\frac{b(y)\vert u_k\vert^p b(x)\vert u_k\vert^p}{\vert x\vert^\alpha\vert x-y\vert^\mu\vert y\vert^\alpha}dxdy & = &\lim_{k\rightarrow\infty}\displaystyle \int_{ B(0,R)}\int_{B(0,R)}\frac{b(y)\vert u_k\vert^pb(x)\vert u_k\vert^p}{\vert x\vert^\alpha\vert x-y\vert^\mu\vert y\vert^\alpha}dxdy\\
				\nonumber & + & 2\lim_{k\rightarrow\infty}\displaystyle \int_{\mathbb R^N\backslash B(0,R)}\int_{B(0,R)}\frac{b(y)\vert u_k\vert^p b(x)\vert u_k\vert^p}{\vert x\vert^\alpha\vert x-y\vert^\mu\vert y\vert^\alpha}dxdy\\
				& + & \displaystyle\lim_{k\rightarrow\infty}\displaystyle \int_{\mathbb R^N\backslash B(0,R)}\int_{\mathbb R^N\backslash B(0,R)}\frac{b(y)\vert u_k\vert^pb(x)\vert u_k\vert^p}{\vert x\vert^\alpha\vert x-y\vert^\mu\vert y\vert^\alpha}dxdy.
			\end{eqnarray}
			The main idea here is to  analyze the term in the right hand side given in \eqref{J_3cont}. Initially, by using Proposition \ref{Stein-Weiss} and the same ideas discussed in the proof of Proposition \ref{Brezis-Lieb}, we observe that 
			$$\left\vert\frac{b(y)\vert u_k\vert^p b(x)\vert u_k\vert^p}{\vert x\vert^\alpha\vert x-y\vert^\mu\vert y\vert^\alpha}\right\vert\leq \frac{b(y)h_l^pb(x)h_l^p}{\vert x\vert^\alpha\vert x-y\vert^\mu\vert y\vert^\alpha}\in L^1(B(0,R)) \times  L^1(B(0,R)).$$
			Thus, using the Dominated Convergence Theorem, we deduce that
			\begin{equation}\label{dentrodab}
				\displaystyle \lim_{k\rightarrow \infty}\int_{B(0,R)}\int_{B(0,R)}\frac{b(y)\vert u_k\vert^pb(x)\vert u_k\vert^p}{\vert x\vert^\alpha\vert x-y\vert^\mu\vert y\vert^\alpha}dxdy
				=\displaystyle\int_{B(0,R)}\int_{B(0,R)}\frac{b(y)\vert u\vert^pb(x)\vert u\vert^p}{\vert x\vert^\alpha\vert x-y\vert^\mu\vert y\vert^\alpha}dxdy.
			\end{equation}
			Once again, by using and Proposition \ref{Stein-Weiss} and the H\"older inequality, we infer that 
			\begin{eqnarray*}\label{A1}
				&&\displaystyle\int_{\mathbb R^N\backslash B(0,R)}\int_{\mathbb R^N\backslash B(0,R)}\frac{b(y)\vert u_k-u\vert^p b(x)\vert u_k-u\vert^p}{\vert x\vert^\alpha\vert x-y\vert^\mu\vert y\vert^\alpha}dxdy \nonumber \\
				&\leq&  [\Vert b\Vert_{L^{\sigma}(\mathbb R^N\backslash B(0,R))}^{s_1}\Vert u_k-u\Vert^{s_1 p}_{2^*}]^{2/s_1} \leq  c \left(\displaystyle\int_{\mathbb R^N\backslash B(0,R)}\vert b\vert^{\sigma}dx\right)^{\frac{2}{\sigma}}.
			\end{eqnarray*}
			where $s_1 = 2N/(2N - 2 \alpha - \mu), s_2 = (2^*/s_1p)'$. Recall also that $\vert b^{\sigma}\chi_{\mathbb R^N\backslash B(0,R)}\vert\leq b^{\sigma}\in L^1(\mathbb R^N)$. Hence, we apply the Dominated Convergence Theorem showing that  
			\begin{eqnarray}\label{foradab}
				\nonumber & &\displaystyle\lim_{R\to \infty}\displaystyle\int_{\mathbb R^N\backslash B(0,R)}\int_{\mathbb R^N\backslash B(0,R)}\frac{b(y)\vert u_k-u\vert^p b(x)\vert u_k-u\vert^p}{\vert x\vert^\alpha\vert x-y\vert^\mu\vert y\vert^\alpha}dxdy \leq  c_2\lim_{R\rightarrow\infty} \left(\displaystyle\int_{\mathbb R^N}\vert b\vert^{\sigma}\chi_{\mathbb R^N\backslash B(0,R)}dx\right)^{\frac{2}{\sigma}}=0.
			\end{eqnarray}	
			It remains to prove that 
			\begin{eqnarray}\label{E21}
				\lim_{R\rightarrow\infty}\displaystyle \int_{\mathbb R^N\backslash B(0,R)}\int_{B(0,R)}\frac{b(y)\vert u_k\vert^p b(x)\vert u_k\vert^p}{\vert x\vert^\alpha\vert x-y\vert^\mu\vert y\vert^\alpha}dxdy  = \lim_{R\rightarrow\infty}\displaystyle \int_{\mathbb R^N\backslash B(0,R)}\int_{B(0,R)}\frac{b(y)\vert u\vert^p b(x)\vert u\vert^p}{\vert x\vert^\alpha\vert x-y\vert^\mu\vert y\vert^\alpha}dxdy = 0.
			\end{eqnarray}
			In order to do that, by using H\"older inequality, we deduce that 
			\begin{eqnarray}
				\nonumber&& \displaystyle\lim_{R\to\infty}\displaystyle \int_{B(0,R)}\int_{\mathbb R^N\backslash B(0,R)}\frac{b(y)\vert u_k-u\vert^p b(x)\vert u_k-u\vert^p}{\vert x\vert^\alpha\vert x-y\vert^\mu\vert y\vert^\alpha}dxdy\\
				\nonumber &\leq & \displaystyle\lim_{R\to \infty}\left[\displaystyle\int_{\mathbb R^N}\left(\displaystyle\int_{\mathbb R^N\backslash B(0,R)}\displaystyle\frac{b\vert u_k-u\vert^p}{\vert x\vert^{\alpha}\vert x-y\vert^{\mu}\vert y\vert^{\alpha}} dy \right)^{r_1} dx \right]^{\frac{1}{r_1}}\Vert b\vert u_k-u\vert^p\Vert_{L^{r_2}(B(0,R))}\\
				\nonumber &\leq &
				\displaystyle\lim_{R\to\infty}\left[\displaystyle\int_{\mathbb R^N}\left(\displaystyle\int_{\mathbb R^N}\displaystyle\frac{b\vert u_k-u\vert^p\chi_{\mathbb R^N\backslash B(0,R)}}{\vert x\vert^{\alpha}\vert x-y\vert^{\mu}\vert y\vert^{\alpha}} dy \right)^{r_1} dx \right]^{\frac{1}{r_1}}\Vert b\vert u_k-u\vert^p\Vert_{L^{r_2}(B(0,R))}.
			\end{eqnarray}
			Now, by using Proposition \ref{DHLSP1}, we obtain that
			\begin{eqnarray}
				&& \displaystyle\lim_{R\to\infty}\displaystyle \int_{B(0,R)}\int_{\mathbb R^N\backslash B(0,R)}\frac{b(y)\vert u_k-u\vert^p b(x)\vert u_k-u\vert^p}{\vert x\vert^\alpha\vert x-y\vert^\mu\vert y\vert^\alpha}dxdy \nonumber \\
				\nonumber &\leq &\displaystyle\lim_{R\to \infty}\left\Vert \displaystyle\int_{\mathbb R^N}\displaystyle\frac{b\vert u_k-u\vert^p\chi_{\mathbb R^N\backslash B(0,R)}}{\vert x\vert^{\alpha}\vert x-y\vert^{\mu}\vert y\vert^{\alpha}}\right\Vert_{r_1}\Vert b\vert u_k-u\vert^p\Vert_{L^{r_2}(B(0,R))}
				\nonumber \\ 
				& \leq &
				\displaystyle\lim_{R\to \infty} c\Vert b\vert u_k-u\vert^p\chi_{\R^N \backslash B(0,R)}\Vert_{r_3}\Vert b\vert u_k-u\vert^p\Vert_{L^{r_2}(B(0,R))} \nonumber 
			\end{eqnarray}
			where
			$
			1/r_2+ 1/r_3 +(2\alpha+\mu)/N=2, \,\,1/r_1+1/r_2=1.
			$
			Hence, by choosing $r_3=r_2$, we observe that $r_2= 2N/(2N-2\alpha-\mu)$. In particular,  by using the H\"older inequality, we deduce that 
			\begin{eqnarray}\label{E31}
				\nonumber\Vert b\vert u_k-u\vert^p\Vert^{r_2}_{L^{r_2}(B(0,R))}&=&\displaystyle\int_{B(0,R)}b^{r_2}(x)\vert u_k-u\vert^{pr_2}dx
				\leq \left(\displaystyle\int_{B(0,R)}b^{\sigma}dx\right)^{\frac{r_2}{\sigma}}\left(\displaystyle\int_{B(0,R)}\vert u_k-u\vert^{pr_2m_1}dx\right)^{\frac{pr_2}{pr_2m_1}}\\
				&\leq& \Vert b\Vert_{\sigma}^{r_2}\Vert u_k-u\Vert_{pr_2m_1}^{pr_2} \leq S_{2^*}^{pr_2}\Vert b\Vert_{\sigma}^{r_2}\Vert u_k-u\Vert^{pr_2}.
			\end{eqnarray}
			Here was used the fact that $m_1 > 1$ is the conjugate exponent of $\sigma/r_2$. Hence, $m_1=(2N-2\alpha-\mu)/p(N-2)$ and $pr_2m_1=2^*$ where $b\in L^{\sigma}(\mathbb R^N)$. Since $u_k\rightharpoonup u$ in $H^1(\mathbb R^N)$ we infer that $\Vert u_k-u\Vert\leq C$ holds for some $C > 0$. It follows from \eqref{E31} that
			\begin{eqnarray}\label{E311}
				\Vert b\vert u_k-u\vert^p\Vert^{r_2}_{L^{r_2}(B(0,R))}\leq CS_{2^*}^{pr_2}\Vert b\Vert_{\sigma}^{r_2}.
			\end{eqnarray}
			Once again, by using H\"older inequality, we infer that 
			\begin{eqnarray*}\label{F20}
				\nonumber\displaystyle\lim_{R\to \infty} c\Vert b\vert u_k-u\vert^p\chi_{\mathbb R^N \backslash B(0,R)}\Vert_{r_2} &=&\displaystyle\lim_{R\to \infty}\displaystyle\int_{\mathbb{R^N}}b^{r_2}(x)\chi_{\mathbb R^N\backslash B(0,R)}\vert u_k-u\vert^{pr_2}dx\\
				\nonumber&\leq & \displaystyle\lim_{R\to\infty}\left(\displaystyle\int_{\mathbb R^N}b(x)^{\sigma}\chi_{\mathbb R^N\backslash B(0,R)}dx\right)^{\frac{r_2}{\sigma}}\left(\displaystyle\int_{\mathbb R^N\backslash B(0,R)}\vert u_k-u\vert^{pr_2m_1}dx\right)^\frac{pr_2}{pr_2m_1}\\
				&\leq & \displaystyle S_{2^*}^{pr_2}\lim_{R\to\infty}\left(\displaystyle\int_{\mathbb R^N}b(x)^{\sigma}\chi_{\mathbb R^N\backslash B(0,R)}dx\right)^{\frac{r_2}{\sigma}} \Vert u_k-u\Vert^{pr_2}.
			\end{eqnarray*}
			Furthermore, we apply the Dominated Convergence Theorem showing that 
			\begin{eqnarray}\label{E30}
				&&\displaystyle\lim_{R\to \infty}\Vert b\vert u_k-u\vert^p\Vert^r_{L^{r_2}(\mathbb R^N\backslash B(0,R))}\leq C\displaystyle\lim_{R\to \infty}\displaystyle\int_{\mathbb R^N}b^{\sigma}\chi_{\mathbb R^N\backslash B(0,R)}(x)dx=0.
			\end{eqnarray}
			Here was used the fact that $\vert b^{\sigma}\chi_{\mathbb R^N\backslash B(0,R)}(x)\vert\leq b^{\sigma}\in L^1(\mathbb R ^N)$.  Therefore, taking into account \eqref{E311} and \eqref{E30}, we deduce that
			$$\lim_{R\to \infty}\displaystyle \int_{B(0,R)}\int_{\mathbb R^N\backslash B(0,R)}\frac{b(y)\vert u_k-u\vert^p b(x)\vert u_k-u\vert^p}{\vert x\vert^\alpha\vert x-y\vert^\mu\vert y\vert^\alpha}dxdy = 0.$$ 
			Hence, by using \eqref{J_3cont}, \eqref{dentrodab}, \eqref{foradab} and \eqref{E21}, we infer that
			\begin{equation*}\label{converg}
				\displaystyle\int_{\mathbb R^N}\int_{\mathbb R^N}\frac{b(y)\vert u_k \vert^p b(x)\vert u_k\vert^p}{\vert x\vert^\alpha\vert x-y\vert^{\mu}\vert y\vert^{\alpha}}dxdy
				\rightarrow\displaystyle\int_{\mathbb R^N}\int_{\mathbb R^N}\frac{b(y)\vert u \vert^p b(x)\vert u\vert^p}{\vert x\vert^\alpha\vert x-y\vert^{\mu}\vert y\vert^{\alpha}}dxdy.
			\end{equation*}
			This finishes the proof. 
		\end{proof}
	\end{prop}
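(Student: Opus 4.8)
The plan is to localise the double integral onto balls $B_R:=B(0,R)$ (with $B_R^c:=\mathbb{R}^N\setminus B_R$) and to combine the compactness of the embedding $H^1(B_R)\hookrightarrow\hookrightarrow L^l(B_R)$ for $l<2^*$ with the Stein-Weiss and weighted Hardy-Littlewood-Sobolev inequalities on the unbounded part. Since $v_k\rightharpoonup v$ in $H^1(\mathbb{R}^N)$, for every fixed $R>0$ and every $1\le l<2^*$ one has $v_k\to v$ in $L^l(B_R)$, $v_k\to v$ a.e.\ in $B_R$, and there is $h_l\in L^l(B_R)$ with $|v_k|\le h_l$ a.e.\ in $B_R$. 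Decomposing $\mathbb{R}^N=B_R\cup B_R^c$ in both variables splits $B(v_k)$ (and $B(v)$) into the four pieces over $B_R\times B_R$, $B_R\times B_R^c$, $B_R^c\times B_R$ and $B_R^c\times B_R^c$, the two mixed ones being equal by the symmetry of the kernel.

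On $B_R\times B_R$ I would pass to the limit $k\to\infty$ with the Dominated Convergence Theorem. Applying Proposition~\ref{Stein-Weiss} with both functions equal to $b\,h_l^{\,p}\,\chi_{B_R}$ and exponent $s_1=2N/(2N-2\alpha-\mu)$ (an admissible choice, since $2/s_1+(2\alpha+\mu)/N=2$ and $1/s_1=1/\sigma+p/2^*$, so that $b\,h_l^{\,p}\in L^{s_1}(B_R)$ follows from $(H_3)$ together with $h_l\in L^l(B_R)$ for $l$ chosen close to $2^*$) one sees that the function $b(x)h_l^{\,p}(x)\,b(y)h_l^{\,p}(y)/(|x|^\alpha|x-y|^\mu|y|^\alpha)$ belongs to $L^1(B_R\times B_R)$ and dominates the corresponding integrands built from $v_k$; hence this piece converges to the same expression with $v$ in place of $v_k$.

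For the three remaining pieces I would prove bounds that are uniform in $k$ and vanish as $R\to\infty$. On $B_R^c\times B_R^c$, Proposition~\ref{Stein-Weiss} with the same $s_1$ and H\"older's inequality (splitting $b|v_k|^p=b\cdot|v_k|^p$ with $b\in L^\sigma$ and $|v_k|^p\in L^{2^*/p}$, then using $H^1(\mathbb{R}^N)\hookrightarrow L^{2^*}(\mathbb{R}^N)$ and $\|v_k\|\le C$) give a bound of the form $c\,\|b\|_{L^\sigma(B_R^c)}^{2}$, which tends to $0$ as $R\to\infty$ by the Dominated Convergence Theorem, since $b^\sigma\in L^1(\mathbb{R}^N)$. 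The mixed term $B_R\times B_R^c$ is the delicate one, because the two copies of $v_k$ then live on different sets and a direct domination argument is unavailable. Here I would invoke the weighted Hardy-Littlewood-Sobolev inequality (Proposition~\ref{DHLSP1}) to bound $\bigl\|\int_{B_R^c} b|v_k|^p/(|x|^\alpha|x-y|^\mu|y|^\alpha)\,dy\bigr\|_{r_1}$ by $c\,\|\,b|v_k|^p\chi_{B_R^c}\|_{r_3}$ with $1/r_1+1/r_2=1$ and $1/r_2+1/r_3+(2\alpha+\mu)/N=2$, choosing $r_3=r_2=2N/(2N-2\alpha-\mu)$; then pair this against $\|\,b|v_k|^p\|_{L^{r_2}(B_R)}$ by H\"older, and control both norms through a further H\"older split with conjugate exponent $m_1=(2N-2\alpha-\mu)/(p(N-2))$, for which $p\,r_2\,m_1=2^*$, using $b\in L^\sigma$ and $\|v_k\|\le C$. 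Since $\|\,b|v_k|^p\|_{L^{r_2}(B_R)}$ stays bounded while $\|\,b|v_k|^p\chi_{B_R^c}\|_{r_2}\to 0$ as $R\to\infty$ uniformly in $k$ (again because $b^\sigma\in L^1(\mathbb{R}^N)$), the mixed piece is negligible, uniformly in $k$.

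To conclude I would assemble these estimates. Writing $|B(v_k)-B(v)|$ as the difference of the two $B_R\times B_R$ parts plus the tail contributions coming from $v_k$ (bounded uniformly in $k$) and from $v$, one fixes, given $\varepsilon>0$, a radius $R$ so large that all tail contributions are below $\varepsilon$; then lets $k\to\infty$, using the Dominated Convergence step on $B_R\times B_R$; and finally lets $\varepsilon\to0$, obtaining $B(v_k)\to B(v)$. I expect the mixed region $B_R\times B_R^c$ to be the main obstacle: one must pick the Hardy-Littlewood-Sobolev exponents $r_1,r_2,r_3$ so that the scaling identity $1/r_2+1/r_3+(2\alpha+\mu)/N=2$ holds while the Lebesgue exponent that ends up on $b|v_k|^p$ stays controlled by the single integrability $b\in L^\sigma$ from $(H_3)$ and by the a priori bound $\|v_k\|\le C$; it is this bookkeeping of exponents, rather than any individual inequality, that forms the crux of the argument.
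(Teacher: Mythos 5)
Your proposal follows essentially the same route as the paper's proof: the same decomposition of $\mathbb{R}^N\times\mathbb{R}^N$ into $B_R\times B_R$, the two mixed pieces and $B_R^c\times B_R^c$, dominated convergence via the Stein--Weiss inequality on the bounded block, and the weighted Hardy--Littlewood--Sobolev inequality with the identical exponent choices $r_2=r_3=2N/(2N-2\alpha-\mu)$ and $m_1=(2N-2\alpha-\mu)/(p(N-2))$ (so that $pr_2m_1=2^*$) to kill the tails using $b\in L^\sigma(\mathbb{R}^N)$. The argument is correct; if anything, your final $\varepsilon$--$R$--$k$ assembly is stated slightly more carefully than in the paper.
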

	Now, we shall consider some useful properties for the functional $\Lambda_n$. Namely, we guarantee the following result:
	\begin{lemma}\label{0-homogeneo}
		Suppose ($H_1$)$-$($H_3$). 
		Consider the functional $\Lambda_n : H^1(\mathbb{R}^N) \to \mathbb{R}$ given by $\Lambda_n(u):=Q_n(t_n(u))$. Then we obtain the following statements:
		\begin{itemize}
			\item[i)] The functional $\Lambda_n$ is zero homogeneous, i.e., $\Lambda_n(tu)=\Lambda_n(u)$ for each $t>0, u\in H^1(\mathbb R^N)\setminus \{0\}$. Moreover, $\Lambda_n$ is a continuous and weakly lower semicontinuous;
			\item[ii)] There exists $v\in H^1(\mathbb R^N)\backslash \{0\}$ such that 
			$$
			\lambda^*=\Lambda_n(u)=\displaystyle\inf_{w \in H^1(\mathbb R^N)\backslash\{0\}}\Lambda_n(w).
			$$
			Hence, $\lambda^*$ is attained and $\lambda^*>0$.
			\item[iii)] Consider the function $v\in H^1(\mathbb R^N)\backslash \{0\}$ such that $\Lambda_n(v) = \lambda^*$. Define the function $w=t_n(v)v$. Then $w$ is a weak solution for the following elliptic problem
			\begin{eqnarray}
				\label{problemaextra} \ \ 
				\left\{\begin{array}{ll} 
					-2 \Delta w + 2w =  q\lambda^* a(x) |w|^{q-2} w + 2p\displaystyle \int_{\mathbb{R}^N}\displaystyle\frac{b(y)\vert w(y) \vert dy}{\vert x\vert^\alpha\vert x-y\vert^\mu \vert y\vert^\alpha} b(x)\vert w\vert^{p-2}w,\ \  \hbox{in}\ \mathbb{R}^N,
					\\ \\
					w\in H^1(\mathbb{R}^N).
				\end{array}\right.
			\end{eqnarray}
		\end{itemize}
	\end{lemma}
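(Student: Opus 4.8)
The plan is to establish the three items in turn, using throughout the closed form
$$\Lambda_n(u)=\frac{C_{p,q}\,\Vert u\Vert^{\frac{2p-q}{p-1}}}{A(u)\,[B(u)]^{\frac{2-q}{2p-2}}}$$
together with the weighted weak continuity of $A$ and $B$ from Propositions \ref{c1} and \ref{c2}. For item \textup{(i)} I would first check the homogeneity from the scaling rules $\Vert tu\Vert=t\Vert u\Vert$, $A(tu)=t^{q}A(u)$, $B(tu)=t^{2p}B(u)$: substituting them into the formula above, the power of $t$ in the numerator is $\frac{2p-q}{p-1}$ and the power in the denominator is $q+2p\cdot\frac{2-q}{2p-2}=\frac{q(p-1)+p(2-q)}{p-1}=\frac{2p-q}{p-1}$, so they cancel and $\Lambda_n(tu)=\Lambda_n(u)$ for all $t>0$. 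Continuity on $H^1(\mathbb{R}^N)\setminus\{0\}$ is then immediate since $\Vert\cdot\Vert^2,A,B$ are of class $C^1$ and $A(u),B(u)>0$ when $u\neq0$ by $(H_2)$. For the weak lower semicontinuity I would take $u_k\rightharpoonup u$ in $H^1(\mathbb{R}^N)$ with $u\neq0$; the sequence is bounded, so Propositions \ref{c1} and \ref{c2} give $A(u_k)\to A(u)>0$ and $B(u_k)\to B(u)>0$, while $\Vert u\Vert\le\liminf_k\Vert u_k\Vert$, and these combine to yield $\liminf_k\Lambda_n(u_k)\ge\Lambda_n(u)$.

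For item \textup{(ii)} the zero homogeneity reduces the infimum to the unit sphere, $\lambda^*=\inf_{\Vert u\Vert=1}\Lambda_n(u)$, which is finite because $\Lambda_n$ is finite at any fixed nonzero function. I would take a minimizing sequence $(u_k)$ with $\Vert u_k\Vert=1$; being bounded, $u_k\rightharpoonup v$ in $H^1(\mathbb{R}^N)$ along a subsequence. The crux — and the only genuinely nonroutine point — is to rule out $v=0$: were $v=0$, Propositions \ref{c1} and \ref{c2} would force $A(u_k)\to0$ and $B(u_k)\to0$, so that $\Lambda_n(u_k)=C_{p,q}/\big(A(u_k)[B(u_k)]^{\frac{2-q}{2p-2}}\big)\to+\infty$, contradicting $\Lambda_n(u_k)\to\lambda^*<\infty$. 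Hence $v\neq0$, and letting $k\to\infty$, via weak lower semicontinuity of the norm and $A(u_k)\to A(v)$, $B(u_k)\to B(v)$, we obtain
$$\lambda^*=\lim_k\Lambda_n(u_k)=\frac{C_{p,q}}{A(v)\,[B(v)]^{\frac{2-q}{2p-2}}}\;\ge\;\frac{C_{p,q}\,\Vert v\Vert^{\frac{2p-q}{p-1}}}{A(v)\,[B(v)]^{\frac{2-q}{2p-2}}}=\Lambda_n(v)\;\ge\;\lambda^*,$$
so $\Lambda_n(v)=\lambda^*$ is attained (and, incidentally, $\Vert v\Vert=1$). Finally $0<\lambda^*<\infty$: finiteness was noted above, while $\lambda^*=\Lambda_n(v)>0$ since $C_{p,q}>0$, $\Vert v\Vert>0$, and $0<A(v),B(v)<\infty$ by $(H_3)$ and Proposition \ref{Stein-Weiss}.

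For item \textup{(iii)}, since $v$ is a global minimizer of the $C^1$ functional $\Lambda_n$ on the open set $H^1(\mathbb{R}^N)\setminus\{0\}$, necessarily $\Lambda_n'(v)=0$. Dividing by $\Lambda_n(v)>0$ (equivalently, differentiating $\log\Lambda_n$) and using $A'(v)\varphi=qH(v,\varphi)$, $B'(v)\varphi=2pD(v,\varphi)$, the relation $\Lambda_n'(v)\varphi=0$ rearranges to
$$\langle v,\varphi\rangle=\frac{q(p-1)\Vert v\Vert^2}{(2p-q)A(v)}\,H(v,\varphi)+\frac{p(2-q)\Vert v\Vert^2}{(2p-q)B(v)}\,D(v,\varphi),\qquad\varphi\in H^1(\mathbb{R}^N).$$
Next I set $w=t_n(v)v$ and write $t:=t_n(v)=\big[(2-q)\Vert v\Vert^2/((2p-q)B(v))\big]^{1/(2p-2)}$, so that $t^{2p-2}B(v)=\tfrac{2-q}{2p-q}\Vert v\Vert^2$ and hence $\Vert v\Vert^2-t^{2p-2}B(v)=\tfrac{2p-2}{2p-q}\Vert v\Vert^2$; inserting this into $\lambda^*=Q_n(t)=t^{2-q}\big(\Vert v\Vert^2-t^{2p-2}B(v)\big)/A(v)$ yields $\lambda^*t^{q-2}=\tfrac{2(p-1)\Vert v\Vert^2}{(2p-q)A(v)}$. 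Using the scaling relations $\langle w,\varphi\rangle=t\langle v,\varphi\rangle$, $H(w,\varphi)=t^{q-1}H(v,\varphi)$ and $D(w,\varphi)=t^{2p-1}D(v,\varphi)$, I would multiply the displayed identity by $2t$ and substitute the two relations just found for $\lambda^*t^{q-2}$ and $t^{2p-2}B(v)$; this turns it exactly into
$$2\langle w,\varphi\rangle=q\lambda^*H(w,\varphi)+2pD(w,\varphi),\qquad\varphi\in H^1(\mathbb{R}^N),$$
which is the weak formulation of problem \eqref{problemaextra}.

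The step I expect to be the main obstacle is the no-vanishing argument in item \textup{(ii)}: on the whole space the Sobolev embeddings $H^1(\mathbb{R}^N)\hookrightarrow L^{s}(\mathbb{R}^N)$ are not compact, so excluding $v=0$ is precisely where the weighted weak-continuity Propositions \ref{c1} and \ref{c2} (themselves built on the Brezis--Lieb type identity of Proposition \ref{Brezis-Lieb}) are indispensable; once they are in place, items \textup{(i)} and \textup{(iii)} amount to bookkeeping with the scaling exponents.
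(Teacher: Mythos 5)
Your proof is correct and follows essentially the same route as the paper: zero-homogeneity by scaling, attainment of the infimum via a normalized minimizing sequence whose weak limit is shown to be nonzero using the weighted weak continuity of $A$ and $B$ (Propositions \ref{c1} and \ref{c2}), and the Euler--Lagrange equation of the minimizer for item (iii). The only cosmetic differences are your normalization $\Vert u_k\Vert=1$ in place of the paper's $A(u_k)=1$, and your direct differentiation of the closed form of $\Lambda_n$ in item (iii), where the paper instead combines $Q_n'(t_n(v))=0$ with the chain rule to conclude $R_n'(w)\varphi=0$ before expanding.
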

	\begin{proof}
		The proof of item $i)$ follows by using a straightforward computation. For the proof of item $ii)$ we observe that $\Lambda_n$ is bounded from below. In fact, by using Proposition \ref{Stein-Weiss} and Remark \ref{imersaoRN}, there exists $C > 0$ such that 
		\begin{equation*}
			\displaystyle\int_{\mathbb R^N}\int_{\mathbb R^N}\displaystyle\frac{b(y)\vert u(y)\vert^pb(x)\vert u(x)\vert^p}{\vert x\vert^{\alpha}\vert x-y\vert^{\mu}\vert y\vert^{\alpha}}dxdy\leq C \Vert b\Vert_{st}^{2}\Vert u\Vert^{2p}.
		\end{equation*}
		Under these conditions, we observe that 
		\begin{eqnarray*}
			\Lambda_n(u) & = & \displaystyle\frac{C_{p,q}\Vert u \Vert^{\frac{2p-q}{p-1}}}{A(u) B(u)^{\frac{2-q}{2p-2}}} \geq  \displaystyle\frac{C_{p,q}\Vert u \Vert^{\frac{2p-q}{p-1}}}{c_2\Vert a\Vert_r\Vert u\Vert^q\Vert b\Vert_{\sigma}^{\frac{2-q}{p-1}}\Vert u\Vert^{\frac{p(2-q)}{p-1}}}=C_3>0
		\end{eqnarray*}
		Therefore, $\Lambda_n(u)\geq C_3>0$.
		Now, we consider a minimizing sequence
		$(u_k)\in H^1(\mathbb R^N)\backslash\{0\}$, that is, $
		\Lambda_n(u_k)\to \lambda^*$.
		Now, we observe that  $\Lambda_n(u_k) \leq \lambda^* + 1$ holds for each $k \in \mathbb{N}$ large enough. In particular, we infer that 
		\begin{equation}\label{C5}
			\Lambda_n(u_k)=\frac{C_{p,q}\Vert u_k\Vert^{\frac{2p-q}{p-1}}}{A(u_k)\left[B(u_k)\right]^{\frac{2-q}{2p-2}}}< \lambda^* + 1.
		\end{equation}
		Consider the following auxiliary sequence
		$
		v_k= u_k/ A(u_k)^{\frac{1}{q}}.
		$ It is easy to verify that 
		$
		A(v_k)=1$ holds for each $k\in\mathbb N$.
		Since $\Lambda_n$ is zero homogeneous it follows that
		$\Lambda_n(v_k) = \Lambda_n(u_k) \to \lambda^*$. Therefore, by using \eqref{C5}, we also see that
		\begin{equation}\label{C6}
			\Vert v_k\Vert^{\frac{2p-q}{p-1}} \leq \widetilde{C}\left[B(v_k)\right]^{\frac{2-q}{2p-2}}.
		\end{equation}
		Now, by using \eqref{C6} and Remark \ref{imersaoRN}, we mention that
		\begin{equation}\label{achei}
			\Vert v_k\Vert^{\frac{2p-q}{p-1}} \leq \widetilde{C}_\epsilon\left[B(v_k)\right]^{\frac{2-q}{2p-2}}\leq c\Vert v_k\Vert^{\frac{p(2-q)}{p-1}}.
		\end{equation}
		Hence, by using the last estimate we see that 
		$
		\Vert v_k\Vert \leq c$ holds for some $c>0$.
		In particular, the sequence $(v_k)$ is bounded in $H^1(\mathbb R^N)$. Hence, there exists $v \in H^1(\mathbb R^N)$ such that  $v_k \rightharpoonup v$ in $H^1(\mathbb R^N)$. Therefore, there exists $h_l\in L^l(B(0, R)), 1\leq l<2^*$, such that
		$v_k \rightarrow v$ in $L^l(B(0,R))$ and
		$v_k \to v$ a. e. in $B(0,R)$. Moreover, we know that $\vert v_k \vert \le h_l$ in $B(0,R)$.
		Now, we claim that $v\neq 0$ is satisfied. The proof of this claim follows arguing by contradiction. Assuming that $v \equiv 0$ it follows from \eqref{C6} that $\|v_k\| \to 0$ as $k \to \infty$. However, by using \eqref{achei},  there exists $\delta > 0$ such that $0 < \delta \leq \|v_k\|$. This is a contradiction proving that $v \neq 0$. 
		Therefore, by using the fact that $\Lambda_n$ is weakly lower semicontinuous, we obtain that
		\begin{eqnarray*}
			0 < \lambda^* \leq \Lambda_n(v)\leq \displaystyle\lim_{k\to\infty}\inf\Lambda_n(v_k)=\lambda^*
		\end{eqnarray*}
		Therefore, $\Lambda_n(v)=\lambda^*>0
		$. This ends the proof of item $ii)$.
		
		Now, we shall prove the item $iii)$. Initially, using the fact that  $w = t_n(v)v \in H^1(\mathbb R^N)$ satisfies $\Lambda_n(w) = \lambda^*$, we mention that $t_n(v)$ is maximum point for the function $Q_n$ given by $Q_n(t):=R_n(tv), t > 0$. As a consequence, \begin{equation}\label{A}
			0=Q'_n(t_n(v))=R'_n(t_n(v)v)v.
		\end{equation}

		On the other hand, using the fact that $v$ is a critical point of $\Lambda_n$, we obtain that that
		\begin{eqnarray}\label{B}
			0 & = & (\Lambda_n(v))'\varphi =  (R_n(t_n(v)v))'\varphi = R'_n(t_n(v)v)[t'_n(v)\varphi]v+ R'_n(t_n(v)v)t_n(v)\varphi, \;\; \forall \varphi \in H^1(\mathbb R^N)
		\end{eqnarray}
		In light of \eqref{A} and \eqref{B} we deduce that
		$R_n'(t_n(v)v)\varphi=0$ holds for all $\varphi\in H^1(\mathbb R^N)$.
		In particular, for $w=t_n(v)v$, we obtain that 
		$$\lambda^*=R_n(w)=\frac{\Vert w\Vert^2-B(w)}{A(w)}.
		$$
		Under these conditions, we observe that 
		\begin{equation*}
			0 = R'_n(w)\varphi=\displaystyle\frac{1}{A(w)}\left[2\langle w,\varphi\rangle-2pD(w,\varphi)-q\lambda^*H(w,\varphi)\right].
		\end{equation*}
		Hence, $w$ is a weak solution to the Problem \eqref{problemaextra}. This finishes the proof.
	\end{proof}
	\begin{lemma}\label{lambda_* é atingido} Suppose ($H_1$)$-$($H_3$). 
		Let $\Lambda_e: H^1(\mathbb{R}^N \setminus \{0\}) \to \mathbb{R}$ given by $\Lambda_n(u):=Q_e(t_e(u))$. Then we obtain the following statements:
		\begin{itemize}
			\item [$i)$] The functional $\Lambda_e$ is zero homogeneous, i.e., $\Lambda_e(tu)=\Lambda_e(u)$ holds for each $t>0, u\in H^1(\mathbb R^N) \setminus \{0 \}$. Moreover, $\Lambda_e$ is a continuous and weakly lower semicontinuous.  
			\item[$ii)$] There exists $v\in H^1(\mathbb R^N)\backslash \{0\}$ such that
			$
			\lambda_*=\Lambda_e(v)=
			$
			Furthermore, $\lambda_*$ is attained and $\lambda_*>0$.
			\item[$iii)$] Define the function $w = t_e(u) u$. Then $w$ is a weak solution to the Problem \eqref{PEdcarlos}.
		\end{itemize}
	\end{lemma}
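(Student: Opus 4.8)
The whole lemma is obtained by transferring Lemma~\ref{0-homogeneo} through the pointwise identity $\Lambda_e(u)=C\,\Lambda_n(u)$ with a \emph{fixed} constant $C\in(0,1)$, which is the content of Proposition~\ref{relação ente lambda n e lambda e}. For item $i)$, since $C$ does not depend on $u$, zero homogeneity of $\Lambda_e$ is immediate from that of $\Lambda_n$: $\Lambda_e(tu)=C\Lambda_n(tu)=C\Lambda_n(u)=\Lambda_e(u)$ for every $t>0$ and $u\in H^1(\mathbb R^N)\setminus\{0\}$, and likewise continuity on $H^1(\mathbb R^N)\setminus\{0\}$ and weak lower semicontinuity are inherited from $\Lambda_n$ after multiplying by the positive constant $C$. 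For item $ii)$, the same identity gives $\inf_{w}\Lambda_e(w)=C\inf_{w}\Lambda_n(w)=C\lambda^{*}=:\lambda_{*}$, and the minimizer $v\in H^1(\mathbb R^N)\setminus\{0\}$ furnished by Lemma~\ref{0-homogeneo}$(ii)$ with $\Lambda_n(v)=\lambda^{*}$ is simultaneously a minimizer of $\Lambda_e$, so $\lambda_{*}=\Lambda_e(v)=\inf_{w\in H^1(\mathbb R^N)\setminus\{0\}}\Lambda_e(w)$ is attained. Because $\lambda^{*}>0$ (Lemma~\ref{0-homogeneo}$(ii)$) and $C\in(0,1)$, we get $0<\lambda_{*}=C\lambda^{*}<\lambda^{*}$; in particular $\lambda_{*}>0$, and we have also recovered the ordering $0<\lambda_{*}<\lambda^{*}<\infty$ announced in Theorem~\ref{teorema}. (If a self-contained argument is preferred, one may instead repeat verbatim the minimizing-sequence argument of Lemma~\ref{0-homogeneo}$(ii)$, using the coercivity estimate $\|v_k\|^{(2p-q)/(p-1)}\le c\,B(v_k)^{(2-q)/(2p-2)}\le c\,\|v_k\|^{p(2-q)/(p-1)}$ to get boundedness of the rescaled sequence and Proposition~\ref{c2} to pass to the limit in $B$; the shortcut above is quicker.)

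For item $iii)$, let $v$ be the minimizer from $ii)$ and set $w=t_e(v)v$; note $w\neq 0$, hence $A(w)>0$ and $B(w)>0$ by $(H_2)$, so $t_e$ and $R_e$ make sense at the relevant points. Since $Q_e$ has the same qualitative behaviour as $Q_n$, the number $t_e(v)$ is the unique maximum point of $t\mapsto Q_e(t)=R_e(tv)$, whence $\left.\tfrac{d}{dt}R_e(tv)\right|_{t=t_e(v)}=R'_e(w)v=0$. On the other hand $v$ is an interior minimizer of $\Lambda_e$ on the open set $H^1(\mathbb R^N)\setminus\{0\}$, hence a free critical point; differentiating $\Lambda_e(v)=R_e\big(t_e(v)v\big)$ by the chain rule gives, for every $\varphi\in H^1(\mathbb R^N)$,
\[
0=(\Lambda_e(v))'\varphi=\big(t'_e(v)\varphi\big)\,R'_e(w)v+t_e(v)\,R'_e(w)\varphi=t_e(v)\,R'_e(w)\varphi ,
\]
the last equality using the previous step. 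Dividing by $t_e(v)>0$ yields $R'_e(w)\varphi=0$ for all $\varphi$. Finally, from $R_e(w)=\lambda_{*}$ one has $\tfrac12\|w\|^{2}-\tfrac1{2p}B(w)=\tfrac{\lambda_{*}}{q}A(w)$, and a direct quotient-rule computation of $R'_e(w)\varphi$ — using the expressions for $A'(w)\varphi$ and $B'(w)\varphi$ recorded above — reduces the identity $R'_e(w)\varphi=0$ to $\langle w,\varphi\rangle-\lambda_{*}H(w,\varphi)-D(w,\varphi)=0$, that is, $J'_{\lambda_{*}}(w)\varphi=0$. Hence $w$ is a weak solution of Problem~\eqref{PEdcarlos} with $\lambda=\lambda_{*}$; observe that, in contrast with Lemma~\ref{0-homogeneo}$(iii)$, the projection via $\Lambda_e$ lands directly on the original problem rather than on a rescaled one.

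The routine parts — homogeneity, semicontinuity, boundedness of the minimizing sequence, and the quotient-rule identity — are essentially transferred from the $\Lambda_n$ analysis already carried out. The only point that needs a little care is item $iii)$: one must check that $v$ is genuinely a critical point of $\Lambda_e$ over the open set $H^1(\mathbb R^N)\setminus\{0\}$, and that the chain-rule expansion of $(\Lambda_e(v))'\varphi$ splits into exactly the two displayed terms, the first of which vanishes precisely because $R'_e(w)v=0$ at the maximum point $t=t_e(v)$.
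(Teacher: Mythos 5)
Your proof is correct, but it follows a genuinely different (and shorter) route than the paper for items $i)$ and $ii)$. The paper's own proof of this lemma consists of the single sentence that it ``follows the same ideas employed in the proof of Lemma \ref{0-homogeneo}'', i.e.\ the intended argument is to repeat the minimizing-sequence machinery verbatim for $\Lambda_e$: bound the rescaled sequence via the analogue of the estimate $\Vert v_k\Vert^{(2p-q)/(p-1)}\leq c\, B(v_k)^{(2-q)/(2p-2)}$, rule out $v\equiv 0$, and invoke weak lower semicontinuity. You instead exploit the pointwise proportionality $\Lambda_e(u)=C\Lambda_n(u)$ with the fixed constant $C\in(0,1)$ already established in the paper, so that homogeneity, continuity, weak lower semicontinuity, attainment of the infimum, positivity, and even the ordering $0<\lambda_*<\lambda^*$ are all inherited from Lemma \ref{0-homogeneo} in one line, with the same minimizer $v$ serving for both functionals. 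This is cleaner and arguably makes the strict inequality $\lambda_*<\lambda^*$ (needed in Theorem \ref{teorema}) more transparent than the paper's omitted repetition would; the paper's route has the marginal advantage of not depending on the computation $\widetilde{C}_{p,q}/C_{p,q}<1$. For item $iii)$ your argument coincides with the paper's (compare the proof of Proposition \ref{w minimizador de Lambda _e}): $R'_e(w)v=0$ at the fiber maximum, the chain-rule expansion of $(\Lambda_e(v))'\varphi$ kills the first term, and the quotient-rule identity $R'_e(w)\varphi=\frac{q}{A(w)}J'_{\lambda_*}(w)\varphi$ converts criticality of $R_e$ at $w$ into $J'_{\lambda_*}(w)\varphi=0$; your observation that this lands on the original problem (unlike the rescaled problem in Lemma \ref{0-homogeneo}$(iii)$) is accurate and worth keeping.
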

	\begin{proof}
		The proof follows the same ideas employed in the proof of Lemma \ref{0-homogeneo}. We omit the details. 
	\end{proof}
	
	Now, we shall prove that any nonzero function admits exactly one projection in the Nehari manifold $\mathcal{N}_\lambda^+$ and another projection in the Nehari manifold $\mathcal{N}_\lambda^-$. More specifically, we prove the following result:
	\begin{prop}\label{com 3 itens N^0 é vazia}
		Suppose ($H_1$)$-$($H_3$). Consider $\lambda \in (0,\lambda^*)$ and $u \in H^1(\mathbb R^N) \backslash\{0\}$. Then the fibering map $\phi(t) =J_{\lambda}(tu)$ has exactly two critical points $0<t_n^+(u)<t_n(u)<t_n^-(u)$. Furthermore, we consider the following statements:
		\begin{itemize}
			\item [$i)$] There holds $\mathcal{N}_{\lambda}^0 = \emptyset $ for each $\lambda \in (0,\lambda^*)$.
			\item[$ii)$] The number $t^+_n(u)$ is a local minimum point for the fibering map $\phi$ which satisfies $t^+_n(u)u \in \mathcal{N}_{\lambda}^+$. Furthermore, the number $t_n^-(u)$ is a local maximum for the fibering map $\phi$ which verifies $t_n^-(u)u \in \mathcal{N}_{\lambda}^-$.
			\item[$iii)$] The functional $u \to t_n^+(u)$ and $u \to t_n^-(u)$ belong to $C^1(H^1(\mathbb R^N)\backslash\{0\}, \mathbb R)$.
		\end{itemize}
		\begin{proof}
			$i)$ Assume that there exists $u\in \mathcal{N}_{\lambda}^0$. As a consequence, $t_n(u)=1$. The last assertion implies that
			$$\lambda<\lambda^*=\displaystyle\inf_{v \in H^1(\mathbb R^N)\backslash\{0\}}\Lambda_n(v)\leq \Lambda_n(u)=R_n(t_n(u)u)=R_n(u)=\lambda.$$
			This is a contradiction proving that  $\mathcal{N}_{\lambda}^0=\emptyset$ holds for each $\lambda \in (0, \lambda^*)$.
			
			$ii)$ Consider $u \in H^1(\mathbb R^N)\backslash\{0\}$ be fixed. In view of \eqref{lambda^*} we mention that $R_n(t_n(u)u)=Q_n(t_n(u)) \geq \lambda^*> \lambda $.
			Furthermore, by using Proposition \ref{relação entre R_n e J'}, we obtain that
			$R_n(tu)=\lambda$ if and only if $tu \in \mathcal{N}_\lambda$. As a consequence, the identity $R_n(t u) = \lambda$ admits exactly two roots in such way that $0 < t_n^+(u) < t_n(u) < t_n^-(u)$.
			Clearly, the roots $t_n^+(u)$ and $t_n^-(u)$ are critical points for the fibering map
			$
			\phi(t)=J_{\lambda}(tu).
			$
			Furthermore, by using Proposition \ref{d/dtR_n(tu) e J''}, we mention also that
			$$
			J_{\lambda}''(t_n^+(u)u)(t_n^+(u)u,t_n^+(u)u)>0 \,\, \mbox{and} \,\,
			J_{\lambda}''(t_n^-(u)u)(t_n^-(u)u,t_n^-(u)u)<0.
			$$
			Therefore, $t_n^+(u)u \in \mathcal{N}_{\lambda}^+$ 
			and $t_n^-(u)u \in \mathcal{N}_{\lambda}^-$ are satisfied. In particular, $t_n^-(u)$ is a local maximum point for $\phi$
			and $t_n^+(u)$ is a local minimum point for $\phi$. This ends the proof of item $ii)$.							
			
			$iii)$ Firstly, for each $\lambda \in (0,\lambda^*)$, we mention that 
			\begin{equation*}\label{lambda<R_n}
				\lambda<\lambda^*<Q_n(t_n(u))=R_n(t_n(u)u), \,\, u \in H^1(\mathbb R^N)\backslash\{0\}.
			\end{equation*}
			Notice also that $ \mathcal N_{\lambda} =\mathcal{N}_{\lambda}^+\cup\mathcal{N}_{\lambda}^-$ is satisfied for each $\lambda \in (0,\lambda^*)$. Hence, by using the Proposition \ref{d/dtR_n(tu) e J''} we deduce that 
			$R_n(t_n^+(u)u)=R_n(t_n^-(u)u)=\lambda.$ 
			Furthermore,  we mention that 
			$$\frac{d}{dt}R_n(t_n^+(u)u)>0\;\;\hbox{and}\;\;\frac{d}{dt}R_n(t_n^-(u)u)<0.$$
			Therefore, by using the Implicit Function Theorem \cite[Theorem 2.4.1]{DRABEK}, we obtain that
			$u \mapsto t_n^+(u)$ and $ u \mapsto t^-_n(v)$ are in $C^1(H^1(\mathbb R^N)\backslash\{0\}, \mathbb R)$. This ends the proof. 
		\end{proof}
		\begin{prop}\label{N0}
			Suppose ($H_1$)$-$($H_3$). Then $\mathcal N_{\lambda^*}^0\neq\emptyset$.
		\end{prop}
		\begin{proof}
			In view of Lemma \ref{0-homogeneo} there  exists $ u\in H^1(\mathbb R^N)\backslash\{0\}$ in such way that $$\lambda^*=\Lambda_n(u)=R_n(t_n(u)u), \;\;\;\;\frac{d}{dt} R_n(tu)\mid_{t=t_n(u)}=0.$$ Hence, by using Proposition \ref{d/dtR_n(tu) e J''}, we obtain that $J_{\lambda^*}''(t_n(u)u)(t_n(u)u,t_n(u)u)=0.$
			Hence, $t_n(u)u \in \mathcal N_{\lambda^*}^0,$
			which implies that $\mathcal N_{\lambda^*}^0\neq \emptyset$. This ends the proof. 
		\end{proof}
	\end{prop}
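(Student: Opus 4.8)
The plan is to produce an explicit element of $\mathcal N_{\lambda^*}^0$ out of the minimizer furnished by Lemma \ref{0-homogeneo}. By item $ii)$ of that lemma there is a function $u\in H^1(\mathbb R^N)\setminus\{0\}$ with $\Lambda_n(u)=\lambda^*=\inf_{w\in H^1(\mathbb R^N)\setminus\{0\}}\Lambda_n(w)>0$. By definition $\Lambda_n(u)=Q_n(t_n(u))=R_n(t_n(u)u)$, where $t_n(u)>0$ is the unique critical point of the fibering function $t\mapsto Q_n(t)=R_n(tu)$, and it is a maximum point. I would therefore set $w:=t_n(u)u$, so that $w\neq 0$ and $R_n(w)=\lambda^*$; by Proposition \ref{relação entre R_n e J'} this means $J_{\lambda^*}'(w)w=0$, i.e.\ $w\in\mathcal N_{\lambda^*}$.

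It remains to verify that $w$ lies in the degenerate part $\mathcal N_{\lambda^*}^0$, that is $J_{\lambda^*}''(w)(w,w)=0$. Since $t_n(u)$ maximizes $Q_n$ on $(0,\infty)$ we have $Q_n'(t_n(u))=0$, which is precisely the statement that $\frac{d}{dt}R_n(tu)$ vanishes at $t=t_n(u)$. I would then apply Proposition \ref{d/dtR_n(tu) e J''} with the parameter $\lambda^*$ at the point $w=t_n(u)u$ --- admissible because $R_n(w)=\lambda^*$ --- which asserts that the vanishing of this derivative at $t=t_n(u)$ is equivalent to $J_{\lambda^*}''(w)(w,w)=0$. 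Combined with $w\in\mathcal N_{\lambda^*}$, this yields $w\in\mathcal N_{\lambda^*}^0$, and hence $\mathcal N_{\lambda^*}^0\neq\emptyset$.

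I do not expect a real obstacle in this argument. All of the analysis --- the boundedness of a minimizing sequence for $\Lambda_n$, the zero homogeneity and weak lower semicontinuity of $\Lambda_n$, and the nontriviality of the weak limit --- has already been carried out in Lemma \ref{0-homogeneo}, and the correspondence between the sign of $\frac{d}{dt}R_n(tu)$ and the sign of $J_\lambda''(tu)(tu,tu)$ along $\mathcal N_\lambda$ is exactly Proposition \ref{d/dtR_n(tu) e J''}. The only point needing a little care is the bookkeeping: one must use the minimizer $u$ through the identity $\Lambda_n(u)=R_n(t_n(u)u)$, and apply Proposition \ref{d/dtR_n(tu) e J''} at a point that genuinely lies on $\mathcal N_{\lambda^*}$, both of which are immediate once one observes $R_n(t_n(u)u)=\lambda^*$.
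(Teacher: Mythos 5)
Your argument is correct and takes essentially the same route as the paper's own proof: both extract the minimizer $u$ of $\Lambda_n$ from Lemma \ref{0-homogeneo}, form $w=t_n(u)u$ so that $R_n(w)=\lambda^*$ while $\frac{d}{dt}R_n(tu)$ vanishes at $t=t_n(u)$, and invoke Proposition \ref{d/dtR_n(tu) e J''} to conclude $J_{\lambda^*}''(w)(w,w)=0$, hence $w\in\mathcal N_{\lambda^*}^0$. Your explicit check that $w\in\mathcal N_{\lambda^*}$ via Proposition \ref{relação entre R_n e J'} is a small detail the paper leaves implicit, but otherwise the two proofs coincide.
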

	
	\section{The Nehari method for the functional $J_\lambda$}
	
	In this section we shall prove some results for the functional $J_\lambda$ by applying the Nehari method. Firstly, we shall prove the following result:
	\begin{lemma}\label{coercivo}
		Suppose ($H_1$)$-$($H_3$). The energy functional $J_{\lambda}$ is coercive in the Nehari manifold $\mathcal N_{\lambda}$ for each $\lambda>0$. In particular, $J_{\lambda}$ is bounded from below in $\mathcal N_{\lambda}$.
		\begin{proof}
			Let $u \in \mathcal{N}_{\lambda}$ be a fixed function. Therefore, we obtain 
			$ B(u)=\Vert u\Vert^2-\lambda A(u).$
			Now, by using H\"older inequality and Remark \ref{imersaoRN} together with the last identity, we deduce that 
			\begin{eqnarray*}
				J_{\lambda}(u) 
				& = & \frac{1}{2}\left(1-\frac{1}{p}\right)\Vert u\Vert^2-\lambda\left[\frac{1}{q}-\frac{1}{2p}\right]A(u) \geq \frac{1}{2}\left(1-\frac{1}{p}\right)\Vert u\Vert^2-\lambda\left[\frac{1}{q}-\frac{1}{2p}\right]\Vert a\Vert_r\Vert u\Vert_2^q \nonumber \\
				&\geq&  c_1\Vert u \Vert^2-\lambda c_2\Vert u\Vert^q =  \Vert u\Vert^2\left(c_1-\lambda c_2\Vert u\Vert^{q-2}\right){\rightarrow}\infty,\;\;\Vert u\Vert\to +\infty, u \in \mathcal{N}_\lambda.
			\end{eqnarray*}
			Therefore, $J_{\lambda}$ is coercive and bounded from below in $\mathcal{N}_{\lambda}$. This ends the proof. 
		\end{proof}
	\end{lemma}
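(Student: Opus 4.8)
The plan is to exploit the Nehari constraint to remove the nonlocal term $B(u)$ from $J_\lambda(u)$, thereby reducing the whole estimate to controlling only the subcritical concave term $A(u)$, which is harmless.

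First I would fix $u\in\mathcal N_\lambda$. By the definition of the Nehari manifold \eqref{Nehari} together with the expression \eqref{J'} for $J_\lambda'$, the constraint $J_\lambda'(u)(u)=0$ reads $\Vert u\Vert^2=\lambda A(u)+B(u)$, so $B(u)=\Vert u\Vert^2-\lambda A(u)$. Substituting this into \eqref{FE} gives
\[
J_\lambda(u)=\frac12\Big(1-\frac1p\Big)\Vert u\Vert^2-\lambda\Big(\frac1q-\frac1{2p}\Big)A(u),
\]
and here one only has to remark that both coefficients are strictly positive constants depending solely on $p$ and $q$: indeed $p>2_{\alpha,\mu}>1$ forces $1-1/p>0$, while $1\le q<2<2p$ forces $1/q-1/(2p)>0$.

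Next I would bound $A(u)$ from above. From \eqref{A(u)} and Hölder's inequality with exponents $r$ and $r'=r/(r-1)$ one gets $A(u)\le\Vert a\Vert_r\Vert u\Vert_{qr'}^q$; the choice $r=2^*/(2^*-q)$ from $(H_3)$ is precisely what yields $qr'=2^*$, so by the continuous embedding recorded in Remark \ref{imersaoRN} we obtain $A(u)\le \Vert a\Vert_r S_{2^*}^q\Vert u\Vert^q$. Feeding this into the displayed identity produces constants $c_1,c_2>0$ with
\[
J_\lambda(u)\ge c_1\Vert u\Vert^2-\lambda c_2\Vert u\Vert^q=\Vert u\Vert^2\big(c_1-\lambda c_2\Vert u\Vert^{q-2}\big),\qquad u\in\mathcal N_\lambda.
\]
Since $q-2<0$, the bracket tends to $c_1>0$ as $\Vert u\Vert\to\infty$, hence $J_\lambda(u)\to+\infty$ along $\mathcal N_\lambda$, which is coercivity. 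For boundedness from below it then suffices to note that the scalar map $t\mapsto c_1t^2-\lambda c_2t^q$ on $[0,\infty)$ is continuous and diverges to $+\infty$, so it attains a finite minimum, which serves as a lower bound for $J_\lambda$ on $\mathcal N_\lambda$.

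There is no real obstacle in this argument; the only point deserving care is the exponent bookkeeping in Hölder's inequality — checking that $qr'=2^*$ so that the merely \emph{continuous} (we are on all of $\mathbb R^N$, without compactness) Sobolev embedding $H^1(\mathbb R^N)\hookrightarrow L^{2^*}(\mathbb R^N)$ is the one being used. One should also keep in mind that the estimate is not uniform in $\lambda$: the radius beyond which $J_\lambda$ is positive on $\mathcal N_\lambda$ grows with $\lambda$, but for each fixed $\lambda>0$ the conclusion holds as stated.
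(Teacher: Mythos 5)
Your argument is correct and follows essentially the same route as the paper: substitute the Nehari constraint $B(u)=\Vert u\Vert^2-\lambda A(u)$ into $J_\lambda$, then bound $A(u)$ via H\"older with exponent $r=2^*/(2^*-q)$ and the continuous embedding $H^1(\mathbb R^N)\hookrightarrow L^{2^*}(\mathbb R^N)$ to get $J_\lambda(u)\ge \Vert u\Vert^2(c_1-\lambda c_2\Vert u\Vert^{q-2})$. Your exponent bookkeeping ($qr'=2^*$) is in fact slightly more careful than the paper's, which writes $\Vert u\Vert_2^q$ where $\Vert u\Vert_{2^*}^q$ is meant.
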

	\begin{lemma}\label{modulo u >c}
		Suppose ($H_1$)$-$($H_3$). Assume that $\lambda\in (0, \lambda^*].$ Then,
		for each $u \in \mathcal{N}_{\lambda}^-\cup\mathcal{N}_{\lambda}^0$ there is a constant $C=C(N,p,q)>0$ that does not depend on $\lambda$ in such a way that $\Vert u\Vert \geq C.$ In particular, the sets $\mathcal{N}_{\lambda}^-$ and $\mathcal{N}_{\lambda}^0$ are closed.
	\end{lemma}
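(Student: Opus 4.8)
The plan is to read the bound off directly from the two relations that characterise membership in $\mathcal{N}_{\lambda}^{-}\cup\mathcal{N}_{\lambda}^{0}$ and then to feed in the Stein--Weiss control of $B$. First I would fix $u\in\mathcal{N}_{\lambda}^{-}\cup\mathcal{N}_{\lambda}^{0}$. Since $u\in\mathcal{N}_{\lambda}$ one has $J_{\lambda}'(u)u=0$, i.e. $\Vert u\Vert^{2}=\lambda A(u)+B(u)$, while by hypothesis $J_{\lambda}''(u)(u,u)\le 0$. Writing the latter out via \eqref{J''} and eliminating $\lambda A(u)=\Vert u\Vert^{2}-B(u)$, a one-line computation which uses only $1\le q<2$ produces the key inequality
\[
(2-q)\Vert u\Vert^{2}\le (2p-q)\,B(u).
\]

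Next I would bound $B(u)$ from above in terms of $\Vert u\Vert$. Applying Proposition~\ref{Stein-Weiss} with $f=h=b\,|u|^{p}$ (the balance condition then forcing $s_{1}=2N/(2N-2\alpha-\mu)$), then H\"older's inequality and the embedding $H^{1}(\mathbb{R}^{N})\hookrightarrow L^{2^{*}}(\mathbb{R}^{N})$ from Remark~\ref{imersaoRN}, one obtains $B(u)\le C\,\Vert b\Vert_{\sigma}^{2}S_{2^{*}}^{2p}\Vert u\Vert^{2p}$, where $C>0$ depends only on $N,p,q,\alpha,\mu$ and, crucially, not on $\lambda$ or $u$; this is precisely the estimate already used in the proof of Lemma~\ref{0-homogeneo}. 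Since $u\neq 0$ forces $B(u)>0$, combining the two displays and dividing by $\Vert u\Vert^{2}>0$ gives $\Vert u\Vert^{2p-2}\ge (2-q)/[(2p-q)C\,\Vert b\Vert_{\sigma}^{2}S_{2^{*}}^{2p}]$, and since $2p-2>0$ (recall $p>2_{\alpha,\mu}>1$) this yields $\Vert u\Vert\ge C_{0}$ for a constant $C_{0}>0$ independent of $\lambda$, which is the main claim.

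It remains to deduce closedness. I would argue with sequences: if $u_{k}\to u$ in $H^{1}(\mathbb{R}^{N})$ with $u_{k}\in\mathcal{N}_{\lambda}^{-}\cup\mathcal{N}_{\lambda}^{0}$, then $\Vert u\Vert\ge C_{0}>0$ by the bound just obtained, so $u\neq 0$; since $v\mapsto J_{\lambda}'(v)v$ and $v\mapsto J_{\lambda}''(v)(v,v)$ are continuous on $H^{1}(\mathbb{R}^{N})$ (indeed of $C^{1}$ class, by \eqref{J'} and \eqref{J''}), letting $k\to\infty$ gives $J_{\lambda}'(u)u=0$ and $J_{\lambda}''(u)(u,u)\le 0$, i.e. $u\in\mathcal{N}_{\lambda}^{-}\cup\mathcal{N}_{\lambda}^{0}$. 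Hence $\mathcal{N}_{\lambda}^{-}\cup\mathcal{N}_{\lambda}^{0}$ is closed; for $\lambda\in(0,\lambda^{*})$ we have $\mathcal{N}_{\lambda}^{0}=\emptyset$ by Proposition~\ref{com 3 itens N^0 é vazia}, so $\mathcal{N}_{\lambda}^{-}$ is itself closed, while $\mathcal{N}_{\lambda}^{0}$ is closed trivially for $\lambda<\lambda^{*}$ and, at $\lambda=\lambda^{*}$, by the same continuity argument applied now to the equality $J_{\lambda^{*}}''(u)(u,u)=0$. I do not anticipate a genuine obstacle here: the only point requiring care is that no mass escapes when passing to the limit, which is exactly what the uniform lower bound $C_{0}$ prevents, and the ingredient used to produce that bound — the Stein--Weiss estimate $B(u)\le C\Vert u\Vert^{2p}$ — is already at our disposal from Lemma~\ref{0-homogeneo}.
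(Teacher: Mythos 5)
Your proof is correct and follows essentially the same route as the paper: your key inequality $(2-q)\Vert u\Vert^{2}\le (2p-q)B(u)$, obtained from $J_{\lambda}'(u)u=0$ and $J_{\lambda}''(u)(u,u)\le 0$, is exactly the paper's statement $t_n(u)\le 1$ written out, and the Stein--Weiss bound $B(u)\le C\Vert b\Vert_{\sigma}^{2}\Vert u\Vert^{2p}$ is the same ingredient the paper uses to conclude $\Vert u\Vert\ge C_0$. Your closedness argument via sequences, the uniform lower bound, and continuity of $v\mapsto J_{\lambda}'(v)v$ and $v\mapsto J_{\lambda}''(v)(v,v)$ merely spells out what the paper dismisses as ``a standard argument,'' so there is nothing substantively different here.
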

	\begin{proof}
		Firstly, we assume that $\lambda \in (0,\lambda^*)$.
		Let $u \in \mathcal{N}_\lambda^-$ be a fixed function.
		Under these conditions, we deduce that  $1 = t_n^-(u)\geq t_n(u)$. In particular, by using Remark \ref{imersaoRN},  we obtain that
		\begin{eqnarray*}
			1  =  t_n^-(u) & \geq & t_n(u) =  \left[\frac{(2-q)\Vert u\Vert^2}{(2p-q)B(u)}\right]^{\frac{1}{2p-2}} \geq  \left[\frac{(2-q)\Vert u\Vert^2}{(2p-q)c\Vert b\Vert^2_{\sigma}\Vert u\Vert^{2p}}\right]^{\frac{1}{2p-2}} = c_1\Vert u\Vert^{-1}.
		\end{eqnarray*}
		Therefore,  $\Vert u\Vert \geq c_1(N,p,q)>0$ holds for each $u \in \mathcal{N}_\lambda^-$ whenever $\lambda \in (0, \lambda^*)$. Furthermore, by using a standard argument we obtain that $\mathcal{N}_\lambda^-$ is closed. 
		
		It is important to emphasize that $\mathcal{N}_{\lambda}^0=\emptyset$ for each $\lambda \in (0, \lambda^*)$, see Proposition \ref{com 3 itens N^0 é vazia}. 
		It remains to consider the case $\lambda=\lambda^*$. Notice also that $\mathcal{N}_{\lambda}^0\neq \emptyset$, see Proposition \ref{N0}.
		Consider $u \in \mathcal{N}_{\lambda}^0$ a fixed function. According to Proposition \ref{d/dtR_n(tu) e J''} we obtain that $R'_n(u)u=0$. Recall also that $u \in \mathcal{N}_{\lambda}^0$ implies that $\lambda^*=\Lambda_n(u)$.
		Since $t_n(u)$ is the unique maximum point of the function $Q_n$ it follows that 
		\begin{eqnarray*}
			1=t_n(u) & = & \left[\frac{(2-q)\Vert u\Vert^2}{(2p-q) B(u)}\right]^{\frac{1}{2p-2}} \geq  \left[\frac{(2-q)\Vert u\Vert^2}{(2p-q)c\Vert b\Vert^2_{\sigma}\Vert u\Vert^{2p}}\right]^{\frac{1}{2p-2}} =  c_2 \Vert u\Vert^{-1}.
		\end{eqnarray*}
		Therefore, $\Vert u\Vert \geq c_2(N,p,q)>0$ holds for each $u \in \mathcal{N}_{\lambda^*}^0$. It follows also that $\mathcal{N}_{\lambda}^0$ is closed. This finishes the proof. 
	\end{proof}
	For the next result we shall prove that $\mathcal{N}_{\lambda}$ is a natural constraint for our main problem for each $\lambda \in (0,\lambda^*)$. Indeed, we are in position to prove that any critical point for $J_{\lambda}$ in $\mathcal{N}_{\lambda}$ is a critical point for the functional $J_{\lambda}$. Hence, any minimizer $u\in \mathcal{N}_{\lambda}^+\cup\mathcal{N}_{\lambda}^-$ for the energy function $J_{\lambda}$ restricted to the Nehari manifold is a critical point for $J_{\lambda}$. More precisely, we show the following result:
	\begin{lemma} \label{u ponto critico}
		Suppose ($H_1$)$-$($H_3$). Assume that $u \in \mathcal{N}_{\lambda}^-\cup \mathcal{N}_{\lambda}^+$ is a local minimizer for $J_{\lambda}$ in $\mathcal{N}_{\lambda}$. Then $u$ is a critical point of $J_{\lambda}$ on $H^1(\mathbb R^N)$, that is, we obtain that $J_{\lambda}'(u)\varphi=0$ for each $\varphi \in H^1(\mathbb R^N)$ where $\lambda\in (0,\lambda^*).$
	\end{lemma}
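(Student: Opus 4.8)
The plan is to run the standard ``natural constraint'' argument for the Nehari manifold by means of the Lagrange multiplier rule. Write $\Psi_\lambda(u):=J_\lambda'(u)(u)=\Vert u\Vert^2-\lambda A(u)-B(u)$, so that $\mathcal N_\lambda=\{u\in H^1(\mathbb R^N)\setminus\{0\}:\Psi_\lambda(u)=0\}$; since $A$ and $B$ are of class $C^1$ (as recalled in Section 3), the functional $\Psi_\lambda$ is $C^1$ as well. First I would record the differential identity that, for every $u\in\mathcal N_\lambda$, one has $\Psi_\lambda'(u)(u)=J_\lambda''(u)(u,u)$; this is a one-line computation from \eqref{A(u)}, \eqref{B(u)}, \eqref{J''} together with the constraint relation $\Vert u\Vert^2=\lambda A(u)+B(u)$ that holds on $\mathcal N_\lambda$.

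Next, I would verify that the constraint is non-degenerate at the minimizer $u$. By hypothesis $u\in\mathcal N_\lambda^-\cup\mathcal N_\lambda^+$, and since $\lambda\in(0,\lambda^*)$ Proposition \ref{com 3 itens N^0 é vazia} gives $\mathcal N_\lambda^0=\emptyset$; hence $J_\lambda''(u)(u,u)\neq 0$ and therefore $\Psi_\lambda'(u)(u)=J_\lambda''(u)(u,u)\neq 0$. In particular $\Psi_\lambda'(u)$ is a nonzero functional in $(H^1(\mathbb R^N))^*$, so $\{\Psi_\lambda=0\}$ is a $C^1$ Banach submanifold of codimension one near $u$ and the Lagrange multiplier theorem applies: there is $\theta\in\mathbb R$ with $J_\lambda'(u)=\theta\,\Psi_\lambda'(u)$ in $(H^1(\mathbb R^N))^*$.

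Finally, I would close the argument by testing the multiplier identity against $u$ itself: using $J_\lambda'(u)(u)=0$ (because $u\in\mathcal N_\lambda$) and $\Psi_\lambda'(u)(u)=J_\lambda''(u)(u,u)$ one gets $0=\theta\,J_\lambda''(u)(u,u)$, and since $J_\lambda''(u)(u,u)\neq 0$ this forces $\theta=0$, whence $J_\lambda'(u)\varphi=0$ for every $\varphi\in H^1(\mathbb R^N)$, as claimed. The one genuine obstacle is exactly the non-degeneracy $\Psi_\lambda'(u)(u)\neq 0$, which is where the restriction $\lambda\in(0,\lambda^*)$ is essential (through $\mathcal N_\lambda^0=\emptyset$); if one prefers to avoid the abstract multiplier rule, the same conclusion follows by deforming $u+s\varphi$ back onto $\mathcal N_\lambda^{\pm}$ with the $C^1$ projections $t_n^{\pm}$ furnished by Proposition \ref{com 3 itens N^0 é vazia} and differentiating $s\mapsto J_\lambda\big(t_n^{\pm}(u+s\varphi)(u+s\varphi)\big)$ at $s=0$, using that $u$ is a local minimizer on $\mathcal N_\lambda$ and that $t_n^{\pm}(u)=1$.
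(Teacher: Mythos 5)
Your proposal is correct and is essentially the paper's own argument: the authors define $\theta(u)=J_{\lambda}'(u)u$ (your $\Psi_\lambda$), note that $\theta'(u)u=J_{\lambda}''(u)(u,u)\neq 0$ on $\mathcal{N}_{\lambda}=\mathcal{N}_{\lambda}^+\cup\mathcal{N}_{\lambda}^-$ (using $\mathcal{N}_{\lambda}^0=\emptyset$ for $\lambda\in(0,\lambda^*)$), apply the Lagrange multiplier theorem, and kill the multiplier by testing against $u$. Your closing remark about the alternative via the $C^1$ projections $t_n^{\pm}$ is a valid variant, but the main argument you give coincides with the paper's.
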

	\begin{proof}
		Define the following function $\theta:H^1(R^N)\to \mathbb R$ given by $\theta(u)=J_{\lambda}'(u)u$. Notice that $\theta$ in $C^1$ class whose the Gauteaux derivative is given by
		\begin{equation*}\label{theta com J''}
			\theta'(u) w=J_{\lambda}''(u)(u,w) + J'_\lambda(u)w,  \,\, w \in H^1(\mathbb{R}^N).
		\end{equation*}
		Recall also that $\mathcal{N}_\lambda = \theta^{-1}(\{0\})$ and $\theta'(u) u \neq 0$ for each $u \in \mathcal{N}_\lambda$. Hence, applying the Implicit Function Theorem \cite{DRABEK}, we obtain that $\mathcal{N}_\lambda$ is a $C^1$ manifold. Therefore, by using the Lagrange Multiplier Theorem \cite{DRABEK}, there exists $\kappa \in \mathbb R$ such that  
		\begin{equation*}
			J_{\lambda}'(u)\varphi=\kappa \theta'(u)\varphi, \,\, \varphi \in H^1(\mathbb R^N).
		\end{equation*}
		Now, for $\varphi=u$ and using the fact that $u \in \mathcal{N}_{\lambda}$, we obtain that
		$
		0=J_{\lambda}'(u)u=\kappa \theta'(u)u.
		$
		Since $\theta'(u)u\neq 0$ holds true for each $u \in \mathcal{N}_\lambda$ we deduce that $\kappa=0$. Using the last assertion, we obtain that $J_{\lambda}'(u)\varphi=0$ holds for each $ \varphi \in H^1(\mathbb R^N)$. In particular, $u$ is a critical point of $J_{\lambda}$. This ends the proof. 
	\end{proof}
	For the next result we shall prove that any minimizer sequence in $\mathcal{N}_{\lambda}^-$ strongly converges. In fact, we prove the following result:
	\begin{lemma}\label{J(v)=C_N-}
		Suppose ($H_1$)$-$($H_3$). Assume that $\lambda\in(0,\lambda^*)$. Consider $(v_k)\subset \mathcal{N}_{\lambda}^-$ a minimizer sequence for the functional $J_{\lambda}$ in $\mathcal{N}_{\lambda}^-$. Then there exists $v \in H^1(\mathbb R^N)\backslash\{0\}$ such that, up to a subsequence, $v_k \to v$ in $H^1(\mathbb R^N )$. Furthermore, $v\in \mathcal{N}_{\lambda}^-$ and  $C_{\mathcal{N}_{\lambda}^-}=J_{\lambda}(v)$.
	\end{lemma}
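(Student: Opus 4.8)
The plan is to combine the coercivity of $J_{\lambda}$ on $\mathcal{N}_{\lambda}$, the compactness-type convergences for the functionals $A$ and $B$ coming from the potentials $a,b$, and a comparison of two fibering maps that upgrades weak convergence to strong convergence. First I would note that $(v_k)$ is bounded in $H^1(\mathbb{R}^N)$: indeed $(v_k)\subset\mathcal{N}_{\lambda}$ is a minimizing sequence and, by Lemma \ref{coercivo}, $J_{\lambda}$ is coercive and bounded from below on $\mathcal{N}_{\lambda}$. Passing to a subsequence, $v_k\rightharpoonup v$ in $H^1(\mathbb{R}^N)$, and by Propositions \ref{c1} and \ref{c2} we get $A(v_k)\to A(v)$ and $B(v_k)\to B(v)$; combined with the Nehari identity $\|v_k\|^2=\lambda A(v_k)+B(v_k)$ this yields $\|v_k\|^2\to L:=\lambda A(v)+B(v)$. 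To see $v\neq 0$: if $v=0$ then $A(v_k)\to 0$ and $B(v_k)\to 0$, so $\|v_k\|\to 0$, contradicting $\|v_k\|\geq C>0$ from Lemma \ref{modulo u >c} (valid since $v_k\in\mathcal{N}_{\lambda}^-$ and $\lambda\in(0,\lambda^*]$); hence $v\neq 0$, and since $a,b>0$ in $\mathbb{R}^N$ we have $A(v)>0$, $B(v)>0$, so $L>0$.

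The core step is to prove $\|v\|^2=L$, which then gives $\|v_k\|^2\to\|v\|^2$ and, $H^1(\mathbb{R}^N)$ being Hilbert, $v_k\to v$ strongly. By weak lower semicontinuity $\|v\|^2\leq L$; suppose for contradiction $\|v\|^2<L$. Since $\lambda\in(0,\lambda^*)$, Proposition \ref{com 3 itens N^0 é vazia} provides a unique $s:=t_n^-(v)>0$ with $sv\in\mathcal{N}_{\lambda}^-$, hence $J_{\lambda}(sv)\geq C_{\mathcal{N}_{\lambda}^-}$. I would introduce the auxiliary function $\widetilde{\phi}(t)=\frac{t^2}{2}L-\frac{\lambda}{q}t^q A(v)-\frac{1}{2p}t^{2p}B(v)$, which has exactly the concave--convex fibering shape and satisfies $\widetilde{\phi}(1)=\frac12 L-\frac{\lambda}{q}A(v)-\frac{1}{2p}B(v)=\lim_k J_{\lambda}(v_k)=C_{\mathcal{N}_{\lambda}^-}$. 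Writing $\widetilde{\phi}'(t)=t^{q-1}A(v)\big(\widetilde{Q}_n(t)-\lambda\big)$ with $\widetilde{Q}_n(t)=(t^{2-q}L-t^{2p-q}B(v))/A(v)$, one checks $\widetilde{Q}_n(1)=\lambda$ and, crucially, $\max_{t>0}\widetilde{Q}_n(t)>\Lambda_n(v)\geq\lambda^*>\lambda$ precisely because $\|v\|^2<L$; hence $\widetilde{Q}_n(t)=\lambda$ has two roots $\widetilde{t}^{\,+}<\tau<\widetilde{t}^{\,-}$, $\tau$ being the maximizer of $\widetilde{Q}_n$. Since $v_k\in\mathcal{N}_{\lambda}^-$ forces $1=t_n^-(v_k)>t_n(v_k)\to\tau$ (Propositions \ref{com 3 itens N^0 é vazia}, \ref{d/dtR_n(tu) e J''}), we get $1\geq\tau$, so $1=\widetilde{t}^{\,-}$ and $\widetilde{\phi}$ has a local maximum at $t=1$, namely $\widetilde{\phi}(1)=\max_{t\geq\widetilde{t}^{\,+}}\widetilde{\phi}(t)$. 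Finally, $\|v\|^2<L$ gives $J_{\lambda}(tv)=\widetilde{\phi}(t)-\frac{t^2}{2}(L-\|v\|^2)<\widetilde{\phi}(t)$ for every $t>0$; moreover $s=t_n^-(v)\geq\widetilde{t}^{\,+}$, since otherwise $\widetilde{Q}_n(s)<\lambda$, whereas $R_n(sv)=\lambda$ together with $R_n(tv)<\widetilde{Q}_n(t)$ for all $t>0$ forces $\widetilde{Q}_n(s)>\lambda$, a contradiction. Hence $C_{\mathcal{N}_{\lambda}^-}\leq J_{\lambda}(sv)<\widetilde{\phi}(s)\leq\widetilde{\phi}(1)=C_{\mathcal{N}_{\lambda}^-}$, which is absurd; therefore $\|v\|^2=L$ and $v_k\to v$ in $H^1(\mathbb{R}^N)$.

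To conclude, $\|v\|=\lim_k\|v_k\|\geq C>0$ confirms $v\neq 0$; since $\mathcal{N}_{\lambda}^-$ is closed (Lemma \ref{modulo u >c}) and $v_k\to v$, we obtain $v\in\mathcal{N}_{\lambda}^-$; and, $J_{\lambda}$ being continuous, $J_{\lambda}(v)=\lim_k J_{\lambda}(v_k)=C_{\mathcal{N}_{\lambda}^-}$, so the infimum is attained at $v$. I expect the main obstacle to be the fibering-map comparison ruling out $\|v\|^2<L$: namely, identifying $t=1$ as the \emph{larger} root $\widetilde{t}^{\,-}$ of $\widetilde{Q}_n=\lambda$ (which is exactly where the strict inequality $\lambda<\lambda^*$ is used, through $\max\widetilde{Q}_n>\Lambda_n(v)\geq\lambda^*$), and locating $s=t_n^-(v)$ to the right of $\widetilde{t}^{\,+}$; once these are in place the contradiction and the remaining steps are routine.
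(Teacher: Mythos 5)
Your proof is correct, and it takes a genuinely different route from the paper's at the decisive step. You both start the same way (coercivity gives boundedness, Propositions \ref{c1} and \ref{c2} give $A(v_k)\to A(v)$ and $B(v_k)\to B(v)$, and Lemma \ref{modulo u >c} rules out $v=0$), but to upgrade weak to strong convergence the paper argues by contradiction from $\|v\|<\liminf\|v_k\|$ using the weak lower semicontinuity of $u\mapsto J_\lambda'(u)u$: it shows $J_\lambda'(t_n^-(v)v_k)v_k>0$ for large $k$, places $t_n^-(v)$ inside $(t_n^+(v_k),t_n^-(v_k))$, and then chains $C_{\mathcal{N}_\lambda^-}\le J_\lambda(t_n^-(v)v)<\liminf J_\lambda(t_n^-(v)v_k)\le\liminf J_\lambda(v_k)=C_{\mathcal{N}_\lambda^-}$ via the monotonicity of $t\mapsto J_\lambda(tv_k)$ on $[t_n^+(v_k),t_n^-(v_k)]$. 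You instead pass to the explicit limit fibering map $\widetilde\phi$ built from $L=\lim\|v_k\|^2$, $A(v)$, $B(v)$, identify $t=1$ as the larger root of $\widetilde Q_n=\lambda$ via $t_n(v_k)\to\tau$, and derive the contradiction $C_{\mathcal{N}_\lambda^-}\le J_\lambda(sv)<\widetilde\phi(s)\le\widetilde\phi(1)=C_{\mathcal{N}_\lambda^-}$ from the strict gap $\|v\|^2<L$. The two arguments rest on the same compactness ingredients, but yours is more quantitative and self-contained: it makes explicit where $\lambda<\lambda^*$ enters (through $\max\widetilde Q_n\ge\Lambda_n(v)\ge\lambda^*$, guaranteeing two roots) and where the strict inequality $\|v\|^2<L$ is used, whereas the paper's version leaves the monotonicity claim and the location of $t_n^-(v)$ relative to $t_n^-(v_k)$ somewhat implicit. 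All the auxiliary computations you rely on (e.g.\ $\widetilde Q_n(1)=\lambda$ from the limiting Nehari identity, $\max\widetilde Q_n/\Lambda_n(v)=(L/\|v\|^2)^{(2p-q)/(2p-2)}$, and $Q_n(t)<\widetilde Q_n(t)$ forcing $s\ge\widetilde t^{\,+}$) check out.
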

	\begin{proof}
		Consider a minimizer sequence $(v_k)\subset \mathcal{N}_{\lambda}^-$. As $J_{\lambda}$ is coercive in the Nehari set $\mathcal{N}_\lambda$, we obtain that $(v_k)$ is a bounded sequence. In particular, there exists  $v \in H^1(\R^N)$ such that $v_k \rightharpoonup v$ in $H^1(\R^N)$. Hence, there exists $h_l$ in $L^{l}(B(0,R))$ such that $v_k \to v$ in $L^{l}(B(0,R)), v_k \to v$ a.e in $B(0,R)$ and $|v_k|  \leq h_l$ in $B(0,R)$. Here we claim that $v \neq 0$. The proof follows arguing by contradiction assuming that $v \equiv 0$. Now, by using the same ideas discussed in the proof of Proposition \ref{c1} it follows that 
		$H(v_k, v_k) \to 0 \,\, \mbox{and} \,\, D(v_k,v_k) \to 0, \,\, k \to \infty.$
		Now, by using the last assertion together with Lemma \ref{modulo u >c}, we obtain that $0 < c \leq \|v_k\| \to 0$ as $k \to \infty$. This does not make sense proving that $v \neq 0$.
		
		It remains to prove that $v_k \to v$ in $H^1(\mathbb{R}^N)$. The proof follows arguing by contradiction. Let us assume that $v_k \not\rightarrow v$. In particular, we see that $\Vert v \Vert<\displaystyle\lim_{n \to \infty}\inf \Vert v_k \Vert$. Since $v\neq 0$ there exists $t_k^-(v) > 0$ such that $t_k^-(v)v \in \mathcal{N}_{\lambda}^-$. Recall that the function $ t \mapsto J_{\lambda}(tv)$ is increasing for each $t \in [t_n^+(u), t_n^-(u)]$. Now, by using the fact that $v \to J_{\lambda}'(v)v$ weakly lower semicontinuous, we obtain that
		$$ 0=J_{\lambda}'(t_n^-(v)v)t_n^-(v)v<\displaystyle\liminf_{k\to\infty} J_{\lambda}'(t_n^-(v)v_k)t_n^-(v)v_k.$$
		As a consequence, $J_{\lambda}'(t_n^-(v)v_k)v_k>0$ is verified for each $k \in \mathbb{N}$ large enough. In particular, we obtain also that $t_n^-(v)\in (t_n^+(v_k),t_n^-(v_k))$. Using the last assertion together with the fact that $v \to J_{\lambda}(v)$ is weakly lower semicontinuous we deduce that
		$$C_{\mathcal{N}_{\lambda}^-}\leq J_{\lambda}(t_n^-(v)v)<\displaystyle \liminf_{k \to \infty} J_{\lambda}(t_n^-(v) v_k)\leq \displaystyle \liminf_{k \to \infty} J_{\lambda}(t_n^-(v_k)v_k)=\liminf_{k \to \infty} J_{\lambda}(v_k)=C_{\mathcal{N}_{\lambda}^-}.$$
		This is a contradiction proving that $v_k \to v$ in $H^ 1(R^N)$. The desired result follows from the fact that $J_\lambda$ is in $C^1$ class.
	\end{proof}
	\begin{lemma}\label{C_n+<0}
		Suppose ($H_1$)$-$($H_3$). Assume also that $\lambda\in (0,\lambda^*)$. Then we obtain that  $C_{\mathcal{N}_{\lambda}^+}=J_{\lambda}(u)<0$.
		\begin{proof}
			It follows from Proposition \ref{relação entre R_n e J'} and Proposition \ref{relação entre R_e e J} that $J_{\lambda}(t_n^+(u)u)<0$ holds for each $u \in \mathcal{N}_\lambda^+$. As a consequence, we obtain that $$C_{\mathcal {N}_{\lambda}^+}=\displaystyle\inf_{v\in\mathcal {N}_{\lambda}^+}J_{\lambda}(v)\leq J_{\lambda}(t_n^+(u)u)<0.$$
			In particular, we obtain that $C_{\mathcal {N}_{\lambda}^+}<0$. This ends the proof.
		\end{proof}
	\end{lemma}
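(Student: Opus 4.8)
The plan is to bound $J_\lambda$ from above on $\mathcal{N}_\lambda^+$ by a manifestly negative expression, using only the two relations that define membership in that set. First I would fix $u\in\mathcal{N}_\lambda^+$ and record that $J_\lambda'(u)(u)=0$ gives, via \eqref{J'}, the Nehari identity $\|u\|^2=\lambda A(u)+B(u)$, while $J_\lambda''(u)(u,u)>0$ gives, via \eqref{J''}, the inequality $\|u\|^2-\lambda(q-1)A(u)-(2p-1)B(u)>0$. Eliminating $B(u)$ between these two relations and collecting terms yields the key estimate
\begin{equation*}
	(2p-q)\,\lambda A(u)>2(p-1)\|u\|^2 .
\end{equation*}
Here the signs of $2p-q$, $p-1$ and (below) $2-q$ are all positive by $(H_2)$, which is essentially the only point where one has to be careful.

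Next I would rewrite the energy: substituting $B(u)=\|u\|^2-\lambda A(u)$ into \eqref{FE} gives $J_\lambda(u)=\frac{p-1}{2p}\|u\|^2-\frac{2p-q}{2pq}\,\lambda A(u)$, and inserting the key estimate above produces $J_\lambda(u)<\frac{p-1}{p}\cdot\frac{q-2}{2q}\,\|u\|^2$. Since $1\le q<2$, $p>1$ and $u\neq 0$, the right-hand side is strictly negative, so $J_\lambda(u)<0$ for every $u\in\mathcal{N}_\lambda^+$. Because Proposition \ref{com 3 itens N^0 é vazia} shows that $\mathcal{N}_\lambda^+\neq\emptyset$ for $\lambda\in(0,\lambda^*)$ (for any $w\neq 0$ one has $t_n^+(w)w\in\mathcal{N}_\lambda^+$), the infimum $C_{\mathcal{N}_\lambda^+}=\inf_{v\in\mathcal{N}_\lambda^+}J_\lambda(v)$ is at most the value of $J_\lambda$ at any fixed such element, hence $C_{\mathcal{N}_\lambda^+}<0$.

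A second, more geometric route fits the Section 3 machinery and may be the one I would actually write up: for arbitrary $w\neq 0$ the explicit formulas for $t_n$ and $t_e$ give $t_e(w)=p^{1/(2p-2)}t_n(w)>t_n(w)>t_n^+(w)$, so by Proposition \ref{relação entre Q_n e Q_e} one has $Q_n(t)>Q_e(t)$ on $(0,t_e(w))$; evaluating at $t=t_n^+(w)$ and using $t_n^+(w)w\in\mathcal{N}_\lambda$, i.e. $R_n(t_n^+(w)w)=\lambda$, gives $R_e(t_n^+(w)w)<R_n(t_n^+(w)w)=\lambda$, whence $J_\lambda(t_n^+(w)w)<0$ by Proposition \ref{relação entre R_e e J}, and again $C_{\mathcal{N}_\lambda^+}<0$. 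I do not anticipate a genuine obstacle: the argument is short algebra inside the framework already set up, and the only delicate step is the sign bookkeeping just mentioned. Note finally that the displayed equality $C_{\mathcal{N}_\lambda^+}=J_\lambda(u)$ for an actual minimizer $u$ is not proved in this lemma — it follows from the minimization arguments carried out later for Theorem \ref{teorema} — so the content here is precisely the strict inequality $C_{\mathcal{N}_\lambda^+}<0$.
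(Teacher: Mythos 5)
Your proposal is correct. The second, ``geometric'' route is essentially the paper's own argument: the paper's one-line proof invokes Propositions \ref{relação entre R_n e J'} and \ref{relação entre R_e e J}, the point being exactly that $t_n^+(u)<t_n(u)<t_e(u)$ forces $R_e(t_n^+(u)u)<R_n(t_n^+(u)u)=\lambda$ and hence $J_\lambda(t_n^+(u)u)<0$. Your first, algebraic route (eliminating $B(u)$ between $J_\lambda'(u)u=0$ and $J_\lambda''(u)(u,u)>0$ to get $(2p-q)\lambda A(u)>2(p-1)\|u\|^2$ and then $J_\lambda(u)<\tfrac{p-1}{p}\tfrac{q-2}{2q}\|u\|^2<0$) is a valid self-contained alternative that avoids the Rayleigh-quotient machinery entirely; its computations check out. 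Your closing remark is also accurate: the attainment $C_{\mathcal{N}_\lambda^+}=J_\lambda(u)$ is established only later (Lemma \ref{ponto critico N+}), so the genuine content of this lemma is the strict negativity of the infimum, for which nonemptiness of $\mathcal{N}_\lambda^+$ (Proposition \ref{com 3 itens N^0 é vazia}) is the only additional ingredient.
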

	\begin{lemma}\label{ponto critico N+}
		Suppose ($H_1$)$-$($H_3$). Assume also that $\lambda \in (0,\lambda^*)$. Let  $(u_k)\subset\mathcal{N}_{\lambda}^+$ be a minimizer sequence for 
		$J_{\lambda}$ in $\mathcal{N}_{\lambda}^+$. Then there exists $u \in H^1(\mathbb R^N)\backslash\{0\}$ such that, up to a subsequence, $u_k \to u$ in $H^1(\mathbb R^N)$. Furthermore, we obtain that $u \in \mathcal{N}_\lambda^+$ and  $C_{\mathcal{N}_{\lambda}^+}=J_{\lambda}(u)$.
		\begin{proof}
			Consider a minimizer sequence $(u_k)\subset\mathcal{N}_{\lambda}^+$. Since $J_{\lambda}$ is coercive in the Nehari set $\mathcal{N}_{\lambda}$ it follows that $(u_k)$ is a bounded sequence. Furthermore, we mention that 
			\begin{eqnarray}\label{igNeh}
				0=J_{\lambda}'(u_k)u_k=\Vert u_k\Vert^2-\lambda A(u_k)-B(u_k).
			\end{eqnarray}		
			As a consequence, by using  \eqref{FE}, we mention that
			\begin{equation}\label{2*}
				\Vert u_k \Vert^2=2J_{\lambda}(u_k)+\frac{2\lambda}{q}A(u_k)+\frac{1}{p}B(u_k).
			\end{equation}
			Now, by applying  \eqref{2*} and \eqref{igNeh}, we observe that
			\begin{equation*}
				\lambda A(u_k)=\frac{-2qJ_{\lambda}(u_k)}{2-q}+\frac{q(p-1)}{p(2-q)}B(u_k).
			\end{equation*}
			In view of Propositions
			\ref{c1} and \ref{c2} we infer that
			\begin{eqnarray*}
				\lambda A(u) & = & \frac{q(p-1)}{p(2-q)}B(u)-\frac{2p}{2-q}C_{\mathcal{N}_{\lambda}^+} \geq \frac{-2pC_{\mathcal{N}_{\lambda}^+}}{2-q}>0
			\end{eqnarray*}
			Here was used the fact that $C_{\mathcal{N}_{\lambda}^+}<0$, see Lemma \ref{C_n+<0}. The last estimate implies that $u\neq 0$.
			
			From now on, we shall prove that
			$u_k \to u$ in $ H^1(\mathbb R^N)$. Once again the proof follows arguing by contradiction. Let us assume that $u_k \not\rightarrow u$. Therefore,
			$\Vert u\Vert<\displaystyle\lim_{k\to \infty}\inf\Vert u_k\Vert$. Consider the fibering map $\phi:[0,\infty) \to \mathbb R$ given by  $\phi(t)=J_{\lambda}(tu), t\geq 0$.
			According to Proposition \ref{com 3 itens N^0 é vazia}, there exists a unique $t_n^+(u)>0$ in such way that $t_n^+(u)u \in \mathcal{N}_{\lambda}^+$. Furthermore, we know that $\phi'(t_n^+(u))=J_{\lambda}'(t_n^+(u)u)u=0$.
			Notice also that $u_k\in \mathcal {N}_{\lambda}^+$ and t $\mapsto J_\lambda(t u_k)$ is increasing for each $t \in [t_n^+(u), t^-_n(u)]$. Hence, we deduce that
			\begin{eqnarray*}
				0 = t_n^+(u)\displaystyle\frac{d}{dt}J_{\lambda}(t_n^+(u)u)=J_{\lambda}'(t_n^+(u)u)t_n^+(u)u<\displaystyle\liminf_{k\to\infty} J_{\lambda}'(t^+_n(u)u_k)t^+_n(u)u_k.
			\end{eqnarray*}
			Under these conditions, we we obtain that
			$J_{\lambda}'(t^+_n(u)u_k)t^+_n(u)u_k>0$ holds for each $k\in \mathbb{N}$ large enough.
			Consequently, $t_n^+(u_k)<t_n^+(u)<t_n^-(u_k)$. Moreover, we observe that 
			$1 = t_n^+(u_k) < t_n^+(u)$. Therefore, by using  
			the last assertion, we deduce that
			\begin{equation*}
				C_{\mathcal {N}_{\lambda}^+}\leq J_{\lambda}(t_n^+(u)u)<J_{\lambda}(u)<\displaystyle\lim_{k\to \infty}J_{\lambda}(u_k)=C_{\mathcal {N}_{\lambda}^+}.
			\end{equation*}
			This is a contradiction proving that that $u_k\to u$ in $ H^1(\mathbb R^N)$. Since $J_{\lambda}$ is in $C^1$ class the desired result follows.
		\end{proof}
	\end{lemma}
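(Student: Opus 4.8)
The plan is to follow the scheme already used for the negative branch in Lemma~\ref{J(v)=C_N-}, now exploiting the sign information $C_{\mathcal{N}_{\lambda}^+}<0$ coming from Lemma~\ref{C_n+<0}. First I would use the coercivity of $J_\lambda$ on $\mathcal{N}_\lambda$ (Lemma~\ref{coercivo}) to conclude that the minimizing sequence $(u_k)$ is bounded in $H^1(\mathbb{R}^N)$; passing to a subsequence we get $u_k\rightharpoonup u$ in $H^1(\mathbb{R}^N)$, $u_k\to u$ in $L^l(B(0,R))$ for every $1\le l<2^*$ and a.e.\ in $\mathbb{R}^N$, with $|u_k|\le h_l$ locally for some $h_l\in L^l$.

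Next I would prove $u\neq 0$. Since $u_k\in\mathcal{N}_\lambda^+\subset\mathcal{N}_\lambda$ we have $\|u_k\|^2=\lambda A(u_k)+B(u_k)$; combining this with the expression \eqref{FE} for $J_\lambda(u_k)$ and eliminating $\|u_k\|^2$ gives
\[
\lambda A(u_k)=-\frac{2q}{2-q}\,J_\lambda(u_k)+\frac{q(p-1)}{p(2-q)}\,B(u_k).
\]
Letting $k\to\infty$ and invoking the weak continuity of $A$ and $B$ along bounded sequences (Propositions~\ref{c1} and \ref{c2}) together with $J_\lambda(u_k)\to C_{\mathcal{N}_\lambda^+}$, one obtains $\lambda A(u)=-\frac{2q}{2-q}C_{\mathcal{N}_\lambda^+}+\frac{q(p-1)}{p(2-q)}B(u)\ge-\frac{2q}{2-q}C_{\mathcal{N}_\lambda^+}>0$, because $C_{\mathcal{N}_\lambda^+}<0$ and $B(u)\ge 0$ (the Stein--Weiss kernel and $b$ being positive). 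Hence $A(u)>0$, and in particular $u\neq 0$.

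Then I would establish the strong convergence $u_k\to u$ by contradiction. If $u_k\not\to u$, then $\|u\|<\liminf_{k\to\infty}\|u_k\|$. Since $u\neq 0$, Proposition~\ref{com 3 itens N^0 é vazia} provides a unique $t_n^+(u)>0$ with $t_n^+(u)u\in\mathcal{N}_\lambda^+$ and $\tfrac{d}{dt}J_\lambda(tu)\big|_{t=t_n^+(u)}=0$. Writing $J_\lambda'(tv)(tv)=t^2\|v\|^2-\lambda t^qA(v)-t^{2p}B(v)$, the term $t^2\|v\|^2$ is weakly lower semicontinuous while the rest is weakly continuous, so the strict inequality for the norms yields $0=J_\lambda'(t_n^+(u)u)(t_n^+(u)u)<\liminf_{k\to\infty}J_\lambda'(t_n^+(u)u_k)(t_n^+(u)u_k)$, hence $J_\lambda'(t_n^+(u)u_k)(t_n^+(u)u_k)>0$ for $k$ large. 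By the fibering-map analysis in Proposition~\ref{com 3 itens N^0 é vazia} this forces $t_n^+(u)\in(t_n^+(u_k),t_n^-(u_k))$, and since $u_k\in\mathcal{N}_\lambda^+$ means $t_n^+(u_k)=1$, we get $1<t_n^+(u)$. As $t\mapsto J_\lambda(tu)$ is strictly decreasing on $(0,t_n^+(u))$, this gives $J_\lambda(t_n^+(u)u)<J_\lambda(u)$; on the other hand $\|u\|<\liminf_{k\to\infty}\|u_k\|$ gives $J_\lambda(u)<\liminf_{k\to\infty}J_\lambda(u_k)=C_{\mathcal{N}_\lambda^+}$. Combined with $t_n^+(u)u\in\mathcal{N}_\lambda^+$, this produces $C_{\mathcal{N}_\lambda^+}\le J_\lambda(t_n^+(u)u)<J_\lambda(u)<C_{\mathcal{N}_\lambda^+}$, a contradiction. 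Therefore $u_k\to u$ in $H^1(\mathbb{R}^N)$; since $J_\lambda\in C^1$ we get $J_\lambda(u)=C_{\mathcal{N}_\lambda^+}$ and $J_\lambda'(u)u=0$, so $u\in\mathcal{N}_\lambda$. Because $\mathcal{N}_\lambda^0=\emptyset$ for $\lambda\in(0,\lambda^*)$, either $u\in\mathcal{N}_\lambda^+$ or $u\in\mathcal{N}_\lambda^-$; in the latter case $t_n^-(u)=1$, hence $t_n^+(u)<1$ and, arguing as above, $C_{\mathcal{N}_\lambda^+}\le J_\lambda(t_n^+(u)u)<J_\lambda(u)=C_{\mathcal{N}_\lambda^+}$, again impossible. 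Thus $u\in\mathcal{N}_\lambda^+$ and the lemma follows.

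The main obstacle I expect is the strong-convergence step: one must keep careful track of the ordering of the projection parameters $t_n^{\pm}$ for $u$ versus $u_k$ and, crucially, ensure that the weak lower semicontinuity inequalities are \emph{strict} --- which is precisely where the assumption $u_k\not\to u$ (hence $\|u\|<\liminf_{k\to\infty}\|u_k\|$) and the positivity of $t_n^+(u)$ enter, together with the fact that the non-norm terms of $J_\lambda$ and of $J_\lambda'(\cdot)(\cdot)$ are weakly continuous by Propositions~\ref{c1} and \ref{c2}. A minor technical point is confirming $B(u)\ge 0$, so that the lower bound on $\lambda A(u)$ in the second step is genuinely positive and yields $u\neq 0$.
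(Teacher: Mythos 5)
Your proposal is correct and follows essentially the same route as the paper: coercivity gives boundedness, the Nehari identity combined with the weak continuity of $A$ and $B$ (Propositions \ref{c1} and \ref{c2}) and $C_{\mathcal{N}_{\lambda}^+}<0$ yields $u\neq 0$, and the same fibering-map contradiction argument forces strong convergence. You additionally make explicit two points the paper glosses over --- why the weak lower semicontinuity inequality for $J_{\lambda}'(tv)(tv)$ is strict, and why the limit $u$ lands in $\mathcal{N}_{\lambda}^+$ rather than $\mathcal{N}_{\lambda}^-$ (via $\mathcal{N}_{\lambda}^0=\emptyset$) --- which is a welcome completion rather than a deviation.
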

	At this stage, we consider a regularity result for our main problem. This result is quite important in order to ensure that our main problem admits at least two positive solutions, see Proposition \ref{u e v positivos} ahead. More specifically, we assume that $1\leq q< 2$ or $2\leq q < 2^*$ where $2^* = 2N/(N -2)$. Hence, we can prove the following result:
	
	\begin{prop}\label{regular} Suppose ($H_1$), ($H_2$), ($H_3$). Assume also that ($H_4$) holds for each $1\leq q < 2$ while $a\in L^{2^*N/(2^*2-(q-2)N)}(\mathbb{R}^N)$ is verified for each $q\in[2,2^*)$. Let $u\in H^1(\mathbb R^N)$ be a weak solution for the Problem \eqref{PEdcarlos}.  Then we obtain that $u\in W^{2,\theta}_{loc}(\mathbb R^N)\cap C^{1,\sigma}_{loc}(\mathbb R^N)$ for all $\theta \in (1,\infty)$ and for some $\sigma \in (0,1)$.
		\begin{proof}
			Let $u \in H^1(\mathbb R^N)$ be a fixed weak solution for the Problem \eqref{PEdcarlos}. Consider the auxiliary function $g : \mathbb{R}^N \to \mathbb{R}$ given by 
			$$g(x)=\displaystyle\frac{h(x,u)}{1+\vert u\vert}, x \in \mathbb{R}^N.$$ 
			Here we emphasize that
			$$h(x,u)=\lambda a(x)\vert u\vert^{q-2}u+\displaystyle\int_{\mathbb R^N}\displaystyle\frac{b(y)\vert u\vert^p dy}{\vert x\vert^{\alpha}\vert x-y\vert^{\mu}\vert y\vert^{\alpha}}b(x)\vert u\vert^{p-2}u- V(x)u,\;\;x \in \mathbb R^N.$$
			Thus, $u$ is a weak solution for the elliptic problem
			$$
			-\Delta u=g(x)(1+\vert u\vert),\;\; x \in \mathbb R^N.
			$$
			It is not hard to verify that
			\begin{eqnarray}
				\nonumber\vert h(x,u)\vert  &=& \left\vert\lambda a(x)\vert u\vert^{q-2}u+\displaystyle\int_{\mathbb R^N}\displaystyle\frac{b(y)\vert u\vert^p dy}{\vert x\vert^{\alpha}\vert x-y\vert^{\mu}\vert y\vert^{\alpha}}b(x)\vert u\vert^{p-2}u- V(x) u\right\vert\\
				\nonumber & \leq & \lambda a(x)\vert u\vert^{q-1}+\displaystyle\int_{\mathbb R^N}\displaystyle\frac{b(y)\vert u\vert^p dy}{\vert x\vert^{\alpha}\vert x-y\vert^{\mu}\vert y\vert^{\alpha}}b(x)\vert u\vert^{p-1}+ \|V\|_{\infty}\vert u\vert.
			\end{eqnarray}
			Now, we shall split the prof into some cases. In the first one we assume that 
			$1\leq q<2$ and $p \in [2, 2^*)$. As a consequence, we mention that
			\begin{eqnarray*}
				\vert g(x) \vert
				\nonumber & \leq & \displaystyle\frac{\lambda a(x)\vert u\vert^{q-1}+\displaystyle\int_{\mathbb R^N}\displaystyle\frac{b(y)\vert u\vert^p dy}{\vert x\vert^{\alpha}\vert x-y\vert^{\mu}\vert y\vert^{\alpha}}b(x)\vert u\vert^{p-1}+ \|V\|_{\infty}\vert u\vert}{1+\vert u \vert}\\
				& \leq & \lambda a(x)+\displaystyle\int_{\mathbb R^N}\displaystyle\frac{b(y)\vert u\vert^p dy}{\vert x\vert^{\alpha}\vert x-y\vert^{\mu}\vert y\vert^{\alpha}}b(x)\vert u\vert^{p-2}+ \max(1, \|V\|_{\infty}).
			\end{eqnarray*}
			Now, assuming that $q,p \in [2, 2^*)$, we infer that 
			\begin{eqnarray*}
				\vert g(x) \vert \nonumber &\leq&  \displaystyle\frac{\lambda a(x)\vert u\vert^{q-2}\vert u \vert+\displaystyle\int_{\mathbb R^N}\displaystyle\frac{b(y)\vert u\vert^p dy}{\vert x\vert^{\alpha}\vert x-y\vert^{\mu}\vert y\vert^{\alpha}}b(x)\vert u\vert^{p-1}+ \|V\|_{\infty}\vert u\vert}{1+\vert u \vert}\\
				& \leq & \lambda a(x)\vert u\vert^{q-2}+\displaystyle\int_{\mathbb R^N}\displaystyle\frac{b(y)\vert u\vert^p dy}{\vert x\vert^{\alpha}\vert x-y\vert^{\mu}\vert y\vert^{\alpha}}b(x)\vert u\vert^{p-2}+ \max(\|V\|_{\infty}, 1).
			\end{eqnarray*}
			Therefore, we obtain the following estimates
			\begin{eqnarray*}\label{sistema11}
				\vert g(x)\vert^{\frac{N}{2}}\leq  C \left\{\begin{array}{ll}
					\left[ a(x)+\displaystyle\int_{\mathbb R^N}\displaystyle\frac{b(y)\vert u(y)\vert^pdy}{\vert x\vert^{\alpha}\vert x-y\vert^{\mu}\vert y\vert^{\alpha}} b(x) \vert u(x)\vert^{p-2}+1\right]^{\frac{N}{2}}, \;\;q\in[1,2),  p \in (2, 2^*),\\
					\left[ a(x)\vert u\vert^{q-2}+\displaystyle\int_{\mathbb R^N}\displaystyle\frac{b(y)\vert u(y)\vert^pdy}{\vert x\vert^{\alpha}\vert x-y\vert^{\mu}\vert y\vert^{\alpha}}b(x)\vert u(x)\vert^{p-1}+1\right]^{\frac{N}{2}},\;\;q \in [2,2^*), p \in [2, 2^*).
				\end{array}
				\right.
			\end{eqnarray*}

			From now on, the main idea is to apply the Brezis-Kato Theorem, see \cite[ Lemma B.3]{STRUWE}. In order to do that we need to check that $g\in L^{\frac{N}{2}}(B(0, R))$ holds for all $R > 0$.
			Firstly, for each  $p \in [2, 2^*)$, we need to ensure that 
			\begin{equation}\label{achei2}
				\displaystyle\int_{\mathbb R^N}\displaystyle\frac{b(y)\vert u(y)\vert^pdy}{\vert x\vert^{\alpha}\vert x-y\vert^{\mu}\vert y\vert^{\alpha}}b(x)\vert u(x)\vert^{p-2}\in L^{\frac{N}{2}}(B(0,R)).
			\end{equation}
			In fact, by applying H\"older inequality, we infer that
			\begin{eqnarray}\label{conv}
				\displaystyle\int_{B(0,R)}\left[\displaystyle\int_{\mathbb R^N}\displaystyle\frac{b(y)\vert u(y)\vert^pdy}{\vert x\vert^{\alpha}\vert x-y\vert^{\mu}\vert y\vert^{\alpha}}b(x)\vert u(x)\vert^{p-2}\right]^{\frac{N}{2}}dx
				\nonumber& = & \displaystyle\int_{B(0,R)}\left[\displaystyle\int_{\mathbb R^N}\displaystyle\frac{b(y)\vert u(y)\vert^pdy}{\vert x\vert^{\alpha}\vert x-y\vert^{\mu}\vert y\vert^{\alpha}}\right]^{\frac{N}{2}}b(x)^{\frac{N}{2}}\vert u(x)\vert^{(p-2)\frac{N}{2}}dx\\
				\nonumber& \leq & \left[\displaystyle\int_{\mathbb R^N}\left(\displaystyle\int_{\mathbb R^N}\displaystyle\frac{b(y)\vert u(y)\vert^pdy}{\vert x\vert^{\alpha}\vert x-y\vert^{\mu}\vert y\vert^{\alpha}}\right)^{\frac{N}{2}p_1}dx\right]^{\frac{1}{p_1}}\left(\displaystyle\int_{\mathbb R^N}b^{\frac{N}{2}p_2}(x)dx\right)^{\frac{1}{p_2}} \nonumber\\
				&\times&\left(\displaystyle\int_{\mathbb R^N}\vert u\vert^{(p-2)\frac{N}{2}p_3}dx\right)^{\frac{1}{p_3}}.\nonumber
			\end{eqnarray}
			Since $\beta> N/2$ we choose $p_2:=2\beta/N$. Under these conditions, we consider $$p_3=\displaystyle\frac{2.2^*}{N(p-2)} = \dfrac{4}{(N-2)(p-2)}, p_1 = \dfrac{2.2^* \beta}{2.2^* \beta - N(p -2)\beta - N 2^*} = \dfrac{4 \beta}{\beta[4 - (p-2)(N-2)] - 2N}.$$
			Recall also that $p_1, p_2, p_3>1$ is verified for each $\beta > 2N/[ 4 - (p -2)(N-2)]$. Thus, by using \eqref{conv} and Proposition \ref{imersaoRN}, we infer that
			\begin{eqnarray}\label{m13}
				\displaystyle\int_{B(0,R)}\left[\displaystyle\int_{\mathbb R^N}\displaystyle\frac{b(y)\vert u(y)\vert^pdy}{\vert x\vert^{\alpha}\vert x-y\vert^{\mu}\vert y\vert^{\alpha}}b(x)\vert u(x)\vert^{p-2}\right]^{\frac{N}{2}}dx & \leq & \left\Vert \displaystyle\int_{\mathbb R^N}\frac{b(y)\vert u(y)\vert^p}{\vert x\vert^{\alpha}\vert x-y\vert^{\mu}\vert y\vert^{\alpha}}dy\right\Vert^{\frac{N}{2}}_{\frac{Np_1}{2}} \Vert b\Vert^{\frac{N}{2}}_{\beta}\Vert u \Vert ^{\frac{N}{2}(p-2)}_{2^*}\nonumber\\
				& \leq & C \left\Vert \displaystyle\int_{\mathbb R^N}\frac{b(y)\vert u(y)\vert^p}{\vert x\vert^{\alpha}\vert x-y\vert^{\mu}\vert y\vert^{\alpha}}dy\right\Vert^{\frac{N}{2}}_{\frac{Np_1}{2}}\Vert b\Vert^{\frac{N}{2}}_{\beta}\Vert u\Vert^{\frac{N}{2}(p-2)}.
			\end{eqnarray}
			On the other hand, by using Proposition \ref{DHLSP1}, we deduce that 
			\begin{equation}\label{m10}
				\left\Vert \displaystyle\int_{\mathbb R^N}\frac{b(y)\vert u(y)\vert^p}{\vert x\vert^{\alpha}\vert x-y\vert^{\mu}\vert y\vert^{\alpha}}dy\right\Vert_{\frac{Np_1}{2}}\leq c\Vert b\vert u\vert^p\Vert_{t_1},
			\end{equation}
			where $r_1'= Np_1/2$ and $1/r_1 + 1/t_1 + (2\alpha + \mu)/N = 2$.
			In particular, we obtain that 
			$$t_1'= \dfrac{Np_1}{(2 \alpha + \mu)p_1 - 2N},~t_1 =\frac{Np_1}{p_1( N - 2 \alpha - \mu) + 2} = \displaystyle\frac{2N\beta}{\beta[4-(N-2)(p-2)+2(N-2\alpha-\mu)]-2N}.$$ 
			It is important to emphasize that $t_1 > 1$ holds true for each $$p > \frac{2(N - 2 \alpha - \mu)}{N-2} - \frac{2^*}{\beta}.$$
			Furthermore, by using \eqref{m10} and applying H\"older inequality, we obtain that
			\begin{equation}\label{X1}
				\Vert b\vert u\vert^p\Vert^{t_1}_{t_1}\leq \left(\int_{\mathbb R^N}b^{t_1t_2}dx\right)^{\frac{1}{t_2}}\left(\int_{\mathbb R^N}\vert u\vert^{pt_1t_3} dx\right)^{\frac{1}{t_3}}.
			\end{equation}
			Now, we consider
			$$t_3=\displaystyle\frac{2^*}{pt_1}=\displaystyle\frac{\beta[4-(N-2)(p-2)+2(N-2\alpha-\mu)]-2N}{p\beta(N-2)}.$$
			Notice that $t_3>1$ is satisfied for each $\beta > 2N/[ 4 - (p -2)(N-2)]$. Hence, $t_2 > 1$ is the conjugate exponent of $t_3$ which is given by 
			$$t_2=\displaystyle\frac{\beta[4-(N-2)(p-2)+2(N-2\alpha-\mu)]-2N}{\beta[4-(N-2)(2p-2)+2(N-2\alpha-\mu)]-2N}.$$
			Hence, we obtain that 
			$$t_1t_2=\displaystyle\frac{N\beta}{\beta[2-(N-2)(p-1)+(N-2\alpha-\mu)]-N}.$$
			In view of hypothesis ($H_4$) we obtain that
			$b\in L^{t_1t_2}(\R^N)$ and $\beta[2-(N-2)(p-1)+(N-2\alpha-\mu)]-N>0$.  Therefore, by using \eqref{X1} and Proposition \ref{imersaoRN}, we deduce that
			$$\Vert b\vert u\vert^p\Vert^{t_1}_{t_1}\leq \Vert b\Vert_{t_1t_2}^{t_1}\Vert u\Vert_{2^*}^{pt_1} \leq C \Vert b\Vert_{t_1t_2}^{t_1}\Vert u\Vert^{pt_1}<\infty.$$
			Now, by using \eqref{m13} and \eqref{m10}, we obtain that \eqref{achei2} is now verified. 
			
			In view of hypothesis $(H_4)$ we observe that  $a\in L^{\frac{N}{2}}(B(0,R))$ which is sufficient for each $1 \leq q < 2$.
			It remains to show that
			$\lambda a(x)\vert u \vert ^{q-2} \in L^{\frac{N}{2}}(B(0,R))$ holds for each $q\in (2,2^*)$. In light of H\"older inequality and Remark \ref{imersaoRN}, we observe that
			\begin{eqnarray}
				\int_{B(0,R)}( a(x)\vert u\vert^{q-2})^{\frac{N}{2}}dx &=&\displaystyle\int_{B(0,R)}a(x)^{\frac{N}{2}}\vert u \vert^{(q-2)\frac{N}{2}}dx \nonumber \\
				& \leq& \left(\displaystyle\int_{B(0,R)}a(x)^{N\ell/2} dx\right)^{1/\ell}\left(\displaystyle\int_{B(0,R)}\vert u \vert^{2^*} dx\right)^{(q-2)N/2.2^*}\nonumber\\
				\nonumber&\leq&
				\Vert a\Vert^{\frac{N}{2}}_{N \ell/2} \Vert u\Vert^{\frac{N(q-2)}{2}}_{2^*}\leq C 	 \Vert a\Vert^{\frac{N}{2}}_{N \ell/2} \Vert u\Vert^{\frac{N(q-2)}{2}}<\infty.
			\end{eqnarray}
			It is important to mention that $$ \ell = \frac{2.2^*}{2.2^* - (q -2)N}, 2 < q < 2^*.
			$$ 
			In particular, we see that $\lambda a(x)\vert u\vert^{q-2}\in L^{\frac{N}{2}}(B(0,R))$ holds for each $q \in (2,2^*]$.
			Under these conditions, we conclude that the function $g \in L^{\frac{N}{2}}(B(0,R))$. Therefore, by using \cite[Lemma B3]{STRUWE}, we obtain that $u \in L^{\theta}(B(0,R))$ hods for each $1<\theta<\infty$. Now, by using the Calderón-Zygmund inequality \cite[Theorem 9.9]{TRUDINGER}, we obtain also that $u \in W^{2,\theta}(B(0,R))\hookrightarrow C^{0,\sigma}(\overline{B(0,R)})$. Furthermore, by choosing $\theta> N/2$, there exists $\sigma \in (0,1)$ such that that $u \in C^{1,\sigma}(\overline{B(0,R)})$. Hence, $u \in W_{loc}^{2,\theta}(\mathbb R^N)\cap C^{1, \sigma}(\overline{B(0,R)})$ holds for each $R  > 0$. This ends the proof.
		\end{proof}
	\end{prop}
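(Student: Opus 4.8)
The plan is to put \eqref{PEdcarlos} into a form suited to the Brezis--Kato iteration. Writing
\[
h(x,u)=\lambda a(x)|u|^{q-2}u+\left(\int_{\mathbb R^N}\frac{b(y)|u(y)|^p\,dy}{|x|^\alpha|x-y|^\mu|y|^\alpha}\right)b(x)|u|^{p-2}u-V(x)u
\]
and $g(x)=h(x,u)/(1+|u(x)|)$, a weak solution $u$ of \eqref{PEdcarlos} is a weak solution of $-\Delta u=g(x)(1+|u|)$. Everything then reduces to showing $g\in L^{N/2}(B(0,R))$ for every $R>0$: once this holds, \cite[Lemma B.3]{STRUWE} gives $u\in L^\theta(B(0,R))$ for all $\theta\in(1,\infty)$, the Calderón--Zygmund estimate \cite[Theorem 9.9]{TRUDINGER} upgrades this to $u\in W^{2,\theta}(B(0,R))$, and for $\theta>N/2$ the embedding $W^{2,\theta}(B(0,R))\hookrightarrow C^{1,\sigma}(\overline{B(0,R)})$ yields the claimed local regularity.

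To bound $|g|$ I would use that $|u|^{q-1}/(1+|u|)\le 1$ when $1\le q<2$, that it equals $|u|^{q-2}\,|u|/(1+|u|)\le|u|^{q-2}$ when $2\le q<2^*$, and that $|u|/(1+|u|)\le1$, so that
\[
|g(x)|\le \lambda a(x)\,\omega_q(x)+\left(\int_{\mathbb R^N}\frac{b(y)|u(y)|^p\,dy}{|x|^\alpha|x-y|^\mu|y|^\alpha}\right)b(x)|u(x)|^{p-2}+\max(1,\|V\|_\infty),
\]
where $\omega_q\equiv1$ if $q<2$ and $\omega_q=|u|^{q-2}$ if $q\ge2$. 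The contribution of $V$ is trivially in $L^{N/2}_{loc}$. For the $a$-term: when $q<2$, hypothesis $(H_4)$ gives $a\in L^{N/2}(\mathbb R^N)$; when $2\le q<2^*$, one Hölder step with exponents $\ell=2\cdot2^*/(2\cdot2^*-(q-2)N)$ and $\ell'=2\cdot2^*/((q-2)N)$ together with $H^1\hookrightarrow L^{2^*}$ shows $a|u|^{q-2}\in L^{N/2}(B(0,R))$, using exactly the hypothesis $a\in L^{2^*N/(2\cdot2^*-(q-2)N)}(\mathbb R^N)$.

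The main obstacle is to control the nonlocal factor $\big(\int_{\mathbb R^N}b(y)|u(y)|^p(|x|^\alpha|x-y|^\mu|y|^\alpha)^{-1}dy\big)\,b(x)|u(x)|^{p-2}$ in $L^{N/2}(B(0,R))$. I would apply Hölder on $B(0,R)$ to split its $N/2$-power into three factors: the convolution-type integral raised to the power $Np_1/2$, the factor $b^{Np_2/2}$ with $p_2=2\beta/N$ (legitimate since $\beta>N/2$ by $(H_4)$), and $|u|^{(p-2)Np_3/2}$ with $p_3=2\cdot2^*/(N(p-2))=4/((N-2)(p-2))$, $p_1$ being the remaining conjugate exponent; one checks $p_1,p_2,p_3>1$ precisely when $\beta>2N/(4-(p-2)(N-2))$. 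Proposition \ref{DHLSP1} then bounds the convolution factor in $L^{Np_1/2}$ by $c\,\|b|u|^p\|_{t_1}$ with $r_1'=Np_1/2$ and $1/r_1+1/t_1+(2\alpha+\mu)/N=2$, which forces
\[
t_1=\frac{Np_1}{p_1(N-2\alpha-\mu)+2}=\frac{2N\beta}{\beta[4-(N-2)(p-2)+2(N-2\alpha-\mu)]-2N};
\]
here $t_1>1$ is equivalent to $p>2(N-2\alpha-\mu)/(N-2)-2^*/\beta$, exactly the inequality built into $(H_4)$. A final Hölder step with $t_3=2^*/(pt_1)>1$ gives $\|b|u|^p\|_{t_1}\le c\,\|b\|_{t_1t_2}\|u\|_{2^*}^p$ where $t_1t_2=N\beta/\{\beta[2-(N-2)(p-1)+(N-2\alpha-\mu)]-N\}=\gamma$, so $b\in L^\gamma(\mathbb R^N)$ from $(H_4)$ closes the estimate via $H^1\hookrightarrow L^{2^*}$. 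Collecting the pieces shows $g\in L^{N/2}(B(0,R))$ for every $R>0$, and the bootstrap above delivers $u\in W^{2,\theta}_{loc}(\mathbb R^N)\cap C^{1,\sigma}_{loc}(\mathbb R^N)$ for all $\theta\in(1,\infty)$ and some $\sigma\in(0,1)$.
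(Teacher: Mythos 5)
Your proposal is correct and follows essentially the same route as the paper's proof: the same reduction to $-\Delta u=g(x)(1+|u|)$ with the case split on $q$, the same three-factor H\"older decomposition with $p_2=2\beta/N$ and $p_3=2\cdot 2^*/(N(p-2))$, the same application of Proposition \ref{DHLSP1} leading to $t_1$ and $t_1t_2=\gamma$, and the same Brezis--Kato plus Calder\'on--Zygmund bootstrap (including the paper's own slight imprecision about which $\theta$ yields the $C^{1,\sigma}$ embedding; one should take $\theta>N$ there). No substantive differences to report.
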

	Now, we are in a position to show that the critical points of the energy functional $J_\lambda$ can be chosen  strictly positive in $\mathbb{R}^N$. More precisely, we have the following result:
	\begin{prop}\label{u e v positivos}
		Suppose ($H_1$), ($H_2$), ($H_3$), ($H_4$). Then the energy functional $J_{\lambda}$ admits at least two critical points $u$ and $v$ which are strictly positive in $\mathbb R^N$ for each $\lambda\in(0,\lambda^*)$.
		\begin{proof}
			According to Lemma \ref{J(v)=C_N-} there exists $v\in \mathcal{N}^-_{\lambda}$ such that
			$
			C_{\mathcal {N}_{\lambda}^-} = J_{\lambda}(v)$ holds for each  $\lambda\in(0,\lambda^*).
			$
			Furthermore, by using Lemma \ref{u ponto critico}, we guarantee that the minimizer $v\in\mathcal {N}_{\lambda}^-$ is a critical point of $J_{\lambda}$. Thus, $v$ is a weak solution to Problem \eqref{PEdcarlos}. Now, by using \eqref{FE} and \eqref{J'}, we deduce that $J_{\lambda}(v)=J(\vert v\vert)$ and $J_{\lambda}'(\vert v\vert)\vert v \vert=J_{\lambda}'(v)v=0$. Hence, $\vert v\vert \in \mathcal {N}_{\lambda}$. Moreover, by using \eqref{J''}, we observe that $J_{\lambda}''(\vert v\vert)(\vert v\vert,\vert v\vert)=J_{\lambda}''(v)(v,v)<0$. It follows that $\vert v\vert\in\mathcal {N}_{\lambda}^-$. Under these conditions, we obtain that $\vert v \vert\in \mathcal {N}_{\lambda}^-$ is a minimizer for $J_{\lambda}$ in $\mathcal {N}_{\lambda}$. Therefore, by using Lemma \ref{u ponto critico}, we deduce that $\vert v\vert$ is a critical point of functional energy $J_{\lambda}$. The last assertion implies that $\vert v \vert$ is a weak solution to Problem \eqref{PEdcarlos}. Without loss of generality, we assume that $v\geq 0$ in $\mathbb R^N$. Hence, the energy functional $J_{\lambda}$ admits at least one critical point $v\in H^1(\mathbb R^N)$ such that $v\geq 0$ in $\mathbb R^N$ holds for each $\lambda\in (0,\lambda^*)$. Now, we observe that $v \in C_{loc}^{1,\sigma}(\mathbb R^N)$, see Proposition \ref{regular}. 
			
			In what follows, the main objective is to ensure that $v>0$ in $\mathbb{R}^N$. The proof follows arguing by contradiction. Assume that there exists  $x_0 \in \mathbb R^N$ such that $v(x_0)=0$. Let $R>0$ be fixed. Recall also that $v\in C^{1,\sigma}(B(x_0, R))$ holds for some $\sigma \in (0,1)$. Therefore, $v$ satisfies the following inequalities
			\begin{eqnarray*}\label{sistema5}
				\left\{\begin{array}{ll}
					-\Delta v+v\geq 0,\;\;\hbox{in}\;\; B(x_0,R), \\
					v\geq 0,\;\;\hbox{in}\;\;\partial B(x_0,R).
				\end{array}
				\right.
			\end{eqnarray*}
			Now, by using the Strong Maximum Principle \cite{Carlos}, we obtain that $v=0$ in $B(x_0, R)$ or $v>0$ in $B(x_0, R)$. 
			Assuming that $v=0$ in $B(x_0,R)$ and taking into account that $R>0$ is arbitrary, we guarantee that $v=0$ in $\mathbb R^N$. However, we know that $0 < \delta \leq \|v\|$ holds for each $v \in  \mathcal {N}_{\lambda}^-$. This is a contradiction proving that $v>0$ in $\mathbb R^N$.
			
			Now, we shall prove that there exists another solution which is strictly positive in $\R^N$. Firstly, by using Lemma \ref{ponto critico N+}, there exists $u\in \mathcal{N}^+_{\lambda}$ such that
			$
			C_{\mathcal {N}_{\lambda}^+}=J_{\lambda}(u).
			$
			Furthermore, we obtain that $u$ is the critical point of $J_{\lambda}$ for each $\lambda \in (0, \lambda^*).$ Now, by using Lemma \ref{C_n+<0}, we obtain that $J_{\lambda}(u)<0$ is satisfied for each $\lambda \in (0, \lambda^*)$. The last statement implies that $u\neq 0$. Now, using the same ideas discussed just above, we can verify that $u>0$ in $\mathbb R^N$. Recall also that $\mathcal {N}_{\lambda}^-\cap\mathcal {N}_{\lambda}^+=\emptyset$. As a consequence, Problem \eqref{PEdcarlos} admits at least two positive solutions for each $\lambda \in (0, \lambda^*)$. This finishes the proof.
		\end{proof}
	\end{prop}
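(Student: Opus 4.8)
The plan is to produce the two positive critical points separately, one in each branch of the splitting $\mathcal{N}_\lambda=\mathcal{N}_\lambda^+\cup\mathcal{N}_\lambda^-$, and then to upgrade nonnegativity to strict positivity via the Strong Maximum Principle. First I would treat the branch $\mathcal{N}_\lambda^-$. By Lemma~\ref{J(v)=C_N-}, for each $\lambda\in(0,\lambda^*)$ there exists $v\in\mathcal{N}_\lambda^-$ with $C_{\mathcal{N}_\lambda^-}=J_\lambda(v)$, obtained as the strong limit of a minimizing sequence. Since $v$ is a local minimizer of $J_\lambda$ on $\mathcal{N}_\lambda$, Lemma~\ref{u ponto critico} upgrades it to a genuine critical point of $J_\lambda$ on $H^1(\mathbb{R}^N)$, hence a weak solution of \eqref{PEdcarlos}. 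To obtain a nonnegative representative I would pass to $|v|$, exploiting that the nonlinear terms are even: since $\Vert\,|v|\,\Vert=\Vert v\Vert$ and the functionals $A,B$ depend only on $|v|^q,|v|^p$, formulas \eqref{FE}, \eqref{J'} and \eqref{J''} give $J_\lambda(|v|)=J_\lambda(v)$, $J_\lambda'(|v|)|v|=0$ (so $|v|\in\mathcal{N}_\lambda$), and $J_\lambda''(|v|)(|v|,|v|)=J_\lambda''(v)(v,v)<0$ (so $|v|\in\mathcal{N}_\lambda^-$). Thus $|v|$ is again a minimizer over $\mathcal{N}_\lambda^-$, and Lemma~\ref{u ponto critico} again makes it a critical point; so without loss of generality $v\geq 0$.

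Next I would invoke the regularity machinery. Under $(H_4)$, Proposition~\ref{regular} yields $v\in C^{1,\sigma}_{loc}(\mathbb{R}^N)$, which is exactly the smoothness required to apply the Strong Maximum Principle. Because $\lambda>0$, $a,b>0$ and $v\geq0$, the whole right-hand side of \eqref{PEdcarlos} is nonnegative, so on any ball $B(x_0,R)$ one has $-\Delta v + V_\infty\, v\geq -\Delta v + V(x)v \geq 0$ with $v\geq0$ on $\partial B(x_0,R)$. The Strong Maximum Principle then forces, on each such ball, either $v\equiv0$ or $v>0$. The alternative $v\equiv0$ is excluded by Lemma~\ref{modulo u >c}, which gives a uniform lower bound $\Vert v\Vert\geq C>0$ for every element of $\mathcal{N}_\lambda^-$; letting $R\to\infty$ then yields $v>0$ in all of $\mathbb{R}^N$.

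For the second solution I would run the same scheme on $\mathcal{N}_\lambda^+$. Lemma~\ref{ponto critico N+} produces $u\in\mathcal{N}_\lambda^+$ with $C_{\mathcal{N}_\lambda^+}=J_\lambda(u)$, and Lemma~\ref{C_n+<0} shows this level is strictly negative, so $u\neq0$. Applying Lemma~\ref{u ponto critico} (criticality), the passage $u\mapsto|u|$ (nonnegativity, with $|u|\in\mathcal{N}_\lambda^+$ since the sign of $J_\lambda''$ is preserved), Proposition~\ref{regular} (regularity) and the Strong Maximum Principle (positivity) exactly as above, I obtain $u>0$ in $\mathbb{R}^N$. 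Finally $u\neq v$ because $\mathcal{N}_\lambda^+\cap\mathcal{N}_\lambda^-=\emptyset$, a consequence of $\mathcal{N}_\lambda^0=\emptyset$ for $\lambda\in(0,\lambda^*)$ established in Proposition~\ref{com 3 itens N^0 é vazia}. Hence $J_\lambda$ admits at least two distinct strictly positive critical points.

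The main obstacle I anticipate is not the minimization—that is already packaged in Lemmas~\ref{J(v)=C_N-} and \ref{ponto critico N+}—but rather the passage to the positive solution. The two delicate points are (i) verifying that $|v|$ (respectively $|u|$) remains in the same Nehari branch so that Lemma~\ref{u ponto critico} may be reapplied to the nonnegative representative, which relies on the even structure of $A$ and $B$ together with $\Vert\,|u|\,\Vert=\Vert u\Vert$; and (ii) arranging the solution into a differential inequality of the form $-\Delta v + c\,v\geq0$ to which a legitimate version of the Strong Maximum Principle applies. Both of these hinge on the $C^{1,\sigma}_{loc}$ regularity of Proposition~\ref{regular}, which is precisely where hypothesis $(H_4)$ is needed.
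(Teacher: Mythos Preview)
Your proposal is correct and follows essentially the same route as the paper: obtain minimizers on $\mathcal{N}_\lambda^-$ and $\mathcal{N}_\lambda^+$ via Lemmas~\ref{J(v)=C_N-} and \ref{ponto critico N+}, upgrade to critical points via Lemma~\ref{u ponto critico}, replace by $|v|,|u|$ using the evenness of $A,B$ and the preservation of the Nehari branch, apply Proposition~\ref{regular} for $C^{1,\sigma}_{loc}$ regularity, and conclude strict positivity by the Strong Maximum Principle on balls together with the lower bound on $\Vert v\Vert$. Your formulation of the differential inequality with $V_\infty$ in place of the paper's coefficient $1$ is in fact slightly more careful, since the equation carries $V(x)u$ rather than $u$.
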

	\begin{prop}\label{w minimizador de Lambda _e}
		Suppose ($H_1$), ($H_2$), ($H_3$), ($H_4$). Assume also that $\lambda=\lambda_*$. Then the energy functional $J_{\lambda}$ admits a critical point $w\in H^1(\mathbb R^N)\backslash \{0\}$ such that $w$ is a minimizer for the functional $\Lambda_e$.
		\begin{proof}
			Recall that $\lambda_*$  is attained, see Lemma \ref{lambda_* é atingido}. As consequence, there exist $u \in H^1(\mathbb{R}^N) \setminus \{0\}$ such that
			$
			\lambda_*=\Lambda_e(u)=R_e(t_e(u)u).
			$
			Notice also that $t_e(u)>0$ is the maximum point of $Q_e(t)=R_e(tu), t > 0$. Let us consider the function $w=t_e(u)u$. Therefore,
			\begin{equation}\label{5*}
				0=Q'_e(1)=(R_e)'(w)w.
			\end{equation}
			On the other hand, using the fact that $u$ is the critical point of the functional $\Lambda_e$, we infer that
			$
			0 = \Lambda'_e(w)\varphi =[R_e(w)]'\varphi.
			$
			Furthermore, by using \eqref{5*}, we obtain that $R_e'(w)\varphi=0$ holds for each $\varphi\in H^1(\mathbb R^N)$. Now, applying \eqref{D4} and  Lemma \ref{lambda_* é atingido}, we obtain the following identity
			$$
			0=R'_e(w)\varphi=\frac{qJ_{\lambda_*}'(w)\varphi}{A(u) }, \, \, \varphi \in H^1(\mathbb R^N).
			$$
			Hence, $J_{\lambda_*}'(w)\varphi=0$ holds for each $\varphi \in H^1(\mathbb R^N)$. This finishes the proof. 
		\end{proof}
	\end{prop}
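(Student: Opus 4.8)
The plan is to combine the fact that $\lambda_*$ is attained (Lemma~\ref{lambda_* é atingido}) with the criticality of a global minimizer of $\Lambda_e$, and then to convert that criticality into a critical point equation for $J_{\lambda_*}$ via the pointwise identity relating $R_e'$ and $J_\lambda'$ on the level set $\{R_e=\lambda_*\}$. First I would fix, by Lemma~\ref{lambda_* é atingido}, a function $u\in H^1(\mathbb R^N)\setminus\{0\}$ with
\[
\lambda_*=\Lambda_e(u)=Q_e(t_e(u))=R_e(t_e(u)u),
\]
and set $w:=t_e(u)u$, which is nonzero since $t_e(u)>0$. Because $t\mapsto Q_e(t)=R_e(tu)$ attains its maximum at $t=t_e(u)$, one has $Q_e'(t_e(u))=R_e'(t_e(u)u)\,u=0$; multiplying by $t_e(u)>0$ gives $R_e'(w)w=0$, so $w$ lies on the natural constraint set attached to $R_e$.

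Next, since $\lambda_*=\inf_v\Lambda_e(v)$ is attained at $u$ and $\Lambda_e\in C^1(H^1(\mathbb R^N)\setminus\{0\};\mathbb R)$, the function $u$ is a critical point of $\Lambda_e$, i.e. $\Lambda_e'(u)\varphi=0$ for every $\varphi\in H^1(\mathbb R^N)$. Differentiating $\Lambda_e(u)=R_e(t_e(u)u)$ by the chain rule,
\[
0=\Lambda_e'(u)\varphi=\bigl(t_e'(u)\varphi\bigr)\,R_e'(w)u+t_e(u)\,R_e'(w)\varphi ,\qquad \varphi\in H^1(\mathbb R^N).
\]
The first summand vanishes because $R_e'(w)u=t_e(u)^{-1}R_e'(w)w=0$, and since $t_e(u)>0$ we conclude $R_e'(w)\varphi=0$ for all $\varphi$.

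Finally I would invoke the Gateaux-derivative form of the computation behind \eqref{D4} (now with a general test direction $\varphi$ in place of $u$): differentiating the quotient \eqref{R_n} defining $R_e$, using the explicit Gateaux derivatives of $A$ and $B$, and substituting the relation $\tfrac12\|z\|^2-\tfrac1{2p}B(z)=\tfrac{\lambda}{q}A(z)$ valid whenever $R_e(z)=\lambda$, one obtains
\[
R_e'(z)\varphi=\frac{q}{A(z)}\,J_\lambda'(z)\varphi,\qquad \varphi\in H^1(\mathbb R^N),\ \ R_e(z)=\lambda .
\]
Applying this at $z=w$ with $\lambda=\lambda_*=R_e(w)$, and noting $A(w)>0$ by $(H_2)$ (since $w\neq0$ and $a>0$), the identity $R_e'(w)\varphi=0$ forces $J_{\lambda_*}'(w)\varphi=0$ for all $\varphi$; hence $w$ is a nontrivial critical point of $J_{\lambda_*}$. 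That $w$ is a minimizer for $\Lambda_e$ is then immediate from the zero-homogeneity of $\Lambda_e$ (Lemma~\ref{lambda_* é atingido}, item $i)$): $\Lambda_e(w)=\Lambda_e(t_e(u)u)=\Lambda_e(u)=\lambda_*=\inf_v\Lambda_e(v)$.

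The step I expect to be the main obstacle is the bookkeeping in the chain-rule computation of $\Lambda_e'(u)\varphi$: one must be certain the $t_e'(u)$-term genuinely disappears, which is exactly where $R_e'(w)w=0$ — equivalently, the fact that $t_e(u)$ is a true critical point of the fibering map $Q_e$ — enters, together with the nondegeneracy $A(w)>0$. Everything else reduces to the quotient rule and the continuous embeddings of Remark~\ref{imersaoRN}.
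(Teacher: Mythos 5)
Your proposal is correct and follows essentially the same route as the paper: attain $\lambda_*$ via Lemma \ref{lambda_* é atingido}, set $w=t_e(u)u$, combine $R_e'(w)w=0$ (criticality of the fibering map) with the chain rule for $\Lambda_e'(u)$ to get $R_e'(w)\varphi=0$ for all $\varphi$, and then convert this to $J_{\lambda_*}'(w)\varphi=0$ through the identity $R_e'(z)\varphi=\frac{q}{A(z)}J_\lambda'(z)\varphi$ on $\{R_e(z)=\lambda\}$. Your write-up is in fact slightly more careful than the paper's on two points (making the cancellation of the $t_e'(u)$-term explicit, and stating the general-test-direction version of \eqref{D4} rather than the radial one), but the argument is the same.
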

	
	\section{The proof of the Theorem \ref{teorema}}
	Firstly, the proof of existence of two minimizers in the Nehari manifolds $\mathcal{N}_\lambda^-$ and $\mathcal{N}_\lambda^+$  follows by using Lemma \ref{J(v)=C_N-} and Lemma \ref{ponto critico N+}, respectively. According to Proposition \ref{regular} we can choose $u > 0$  and $v > 0$ in $\mathbb{R}^N$. Furthermore, we know that $J_\lambda(u) < 0$, see Proposition \ref{C_n+<0}. Hence our main result follows depending on the sign of $J_{\lambda}(v)$ using the size of $\lambda>0$. More precisely, we consider the following items:
	\subsection{The sign of $J_\lambda(v)$ for $\lambda\in(0,\lambda_*)$}
	Firstly, we observe that $v\in\mathcal{N}_{\lambda}^-$ and $t_n^-(v)=1$. In particular, we obtain that $t_e(v)<t_n^-(v)$. Now, by using Proposition \ref{relação entre Q_n e Q_e}, we see that $Q_n(t_n^-(v))<Q_e(t_n^-(v))$. As a consequence, we infer that $\lambda=R_n(t_n^-(v)v)<R_e(t_n^-(v)v)=R_e(v)$. Therefore, by using  Proposition \ref{relação entre R_e e J}, we obtain that $J_{\lambda}(v)>0$.

	\subsection{The sign of $J_\lambda(v)$ for $\lambda = \lambda_*$}
	According to Proposition \ref{w minimizador de Lambda _e} there exists $w \in H^1(\mathbb{R}^N)$ which is a minimizer for the functional $\Lambda_e$, that is, $\lambda_*=\Lambda_e(w)$. Furthermore, $w$ is a critical point of the energy functional $J_{\lambda}$ where $\lambda = \lambda^*$. In particular, $J_{\lambda}'(w)\psi=0$ holds for all $\psi \in H^1(\mathbb R^N)$. Hence, for $\psi=w$, we obtain that $J_{\lambda}'(w)w=0$. The last assertion implies that $w\in\mathcal{N}_{\lambda}$. Now, using the same ideas discussed just before, we obtain that $w=t_e(u)u$, see Proposition \ref{w minimizador de Lambda _e}. In particular, $t_e(u)=t_n^-(u)$. As a consequence, we obtain the following identity
	$$
	\lambda=\lambda_*=\Lambda_e(u)=R_e(t_e(u)u)=R_e(t_n^-(u)u).
	$$
	Now, by using Proposition \ref{relação entre R_e e J}, we infer that $J_{\lambda}(t^-_n(u)u)=0$. Let us consider any minimizer $v\in\mathcal{N}_{\lambda}^-$. Hence,  $J_{\lambda}(v)=C_{\mathcal{N}_{\lambda}^-}\leq J_{\lambda}(t^-_n(u)u)=0$. The last assertion implies that $J_{\lambda}(v)\leq 0$ is now verified. It is not hard to see that $t_e(v)\leq t_n^-(v)=1$ for all $v \in \mathcal{N}_{\lambda}^-$. In particular, by using the fact that $R_n(tv)\leq R_e(tv), t\geq t_e(u)$, we see that $$\lambda=R_n(v)=R_n(t_n^-(v)v)\leq R_e(t_n^-(v)v)=R_e(v).$$ Thus, $\lambda\leq R_e(v)$ is satisfied. Once again, by using the Proposition \ref{relação entre R_e e J}, we obtain that $J_{\lambda}(v)\geq 0$.
	Under these conditions, we deduce that $J_{\lambda}(v)=0$ whenever $\lambda=\lambda_*$.

	\subsection{The sign of $J_\lambda(v)$ for $\lambda\in(\lambda_*, \lambda^*)$}
	Notice that $v \in \mathcal{N}_\lambda^-$ which implies that $t_n^-(v) = 1$. It is not hard to verify that $t^-_n(v) < t_e(v)$ with $\lambda \in (\lambda_*, \lambda^*)$. As a consequence, by using Proposition \ref{relação entre Q_n e Q_e}, we see also that $\lambda =R_n(t_n^-(v)v) > R_n(t_e(v)v) = R_e(t_e(v)v) > R_e (v)$. 
	Now, by using the Proposition \ref{relação entre R_e e J}, we see that $J_{\lambda}(v)=C_{\mathcal{N}_{\lambda}^-} < J_{R_e(v)}(v)=0$.
	In particular, we obtain that $J_{\lambda}(v)<0$ for each $\lambda\in(\lambda_*, \lambda^*)$. This ends the proof.


	
	\bigskip

\end{document}